\newtheorem{definition}{Definition}[section]
\newtheorem{lemma}[definition]{Lemma}
\newtheorem{prop}[definition]{Proposition}
\newtheorem{theorem}[definition]{Theorem}
\newtheorem{cor}[definition]{Corollary}
\newtheorem{conj}[definition]{Conjecture}
\newtheorem{ques}[definition]{Question}
\newtheorem{remark}[definition]{Remark}
\theoremstyle{definition}
\newtheorem{fact}[definition]{Fact}
\newcommand*{\AR}{\mathop{{\rm AR}}\nolimits}
\newcommand*{\CB}{\mathop{{\rm CB}}\nolimits}
\newcommand*{\CM}{\mathop{{\rm CM}}\nolimits}
\newcommand*{\End}{\mathop{{\rm End}}\nolimits}
\newcommand*{\Ext}{\mathop{{\rm Ext}}\nolimits}
\newcommand*{\Hom}{\mathop{{\rm Hom}}\nolimits}
\newcommand*{\rad}{\mathop{{\rm rad}}\nolimits}
\newcommand{\Zg}{\mathop{{\rm Zg}}\nolimits}
\newcommand{\ZCM}{\mathop{{\rm ZCM}}\nolimits}
\newcommand*{\ex}{\exists}
\newcommand*{\ms}{\models}
\newcommand*{\ov}{\overline}
\newcommand*{\seq}{\subseteq}
\newcommand*{\sep}{\supseteq}
\newcommand*{\sm}{\setminus}
\newcommand*{\bsm}{\left(\begin{smallmatrix}}
\newcommand*{\esm}{\end{smallmatrix}\right)}
\newcommand*{\bp}{\begin{pmatrix}}
\newcommand*{\ep}{\end{pmatrix}}
\newcommand*{\fty}{\infty}
\newcommand*{\wg}{\wedge}
\newcommand*{\wt}{\widetilde}
\newcommand*{\xr}{\xrightarrow}
\newcommand*{\mD}{\mathcal{D}}
\newcommand*{\mm}{\mathbf{m}}
\newcommand*{\Z}{\mathbb{Z}}
\newcommand*{\be}{\beta}
\newcommand*{\lam}{\lambda}
\newcommand*{\om}{\omega}
\renewcommand*{\phi}{\varphi}
\begin{document}

\footskip=30pt

\date{}

\title[]{The Ziegler spectrum and Ringel's quilt of the $A$-infinity plane singularity}



\author[]{Gena Puninski}

\address[G.~Puninski]{Belarusian State University, Faculty of Mechanics and Mathematics, av. Nezalezhnosti 4,
Minsk 220030, Belarus}
\email{punins@mail.ru}


\subjclass[2000]{13C14 (primary), 13L05, 16D50}

\keywords{Cohen--Macaulay module, Ziegler spectrum, Ringel's quilt, infinite radical}

\begin{abstract}
We describe the Cohen--Macaulay part of the Ziegler spectrum and calculate Ringel's quilt of the category of
finitely generated Cohen--Macaulay modules over the $A$-infinity plane singularity.
\end{abstract}

\maketitle

\section{Introduction}\label{S-intro}

The original objective of this paper was to describe the Cohen--Macaulay part of the Ziegler spectrum over the
$A_{\fty}$ plane singularity $R$. With a decent knowledge of finitely generated points in this topological space
and Model Theory of Modules this proved to be a task of reasonable complexity. Surprisingly we found that all
non-finitely generated points in this part of the Ziegler spectrum are natural ones: the integral closure
$\wt R$ of $R$, its quotient ring $Q$ and a generic module $G$.

This classification was achieved by the so-called 'interval method': instead of classifying points of this space,
i.e. indecomposable pure injective modules, we calculate some intervals in the lattice of finitely generated
subfunctors of the functor $\Hom(R,-)$ from the category of finitely generated Cohen--Macaulay modules to abelian
groups.

If the lattice structure of such interval is known, then one can use Ziegler's result that each indecomposable pure
injective module  opening this interval is uniquely determined by a nontrivial filter defined by its realization.
This approach resembles the method (hence the name) used by Gelfand--Ponomarev \cite{G-P}, and later by
Ringel \cite{Rin75}, to classify indecomposable finite dimensional modules over certain classes of finite dimensional
algebras.

As for now we have to rely on classification of finitely generated Cohen--Macaulay modules to describe the infinite
part of the Ziegler spectrum. Furthermore it could be a nontrivial task to recover an indecomposable pure injective
module from the pp-type it realizes, but this is quite straightforward in the example of our interest.

We completely describe the topology of the Cohen--Macaulay part of the Ziegler spectrum of $R$, in particular
prove that the Cantor--Bendixson rank of this space equals 2; and the same is the value of the $m$-dimension
of the lattice of pp-formulae of the theory of Cohen--Macaulay modules. From this point of view the category
of finitely generated Cohen--Macaulay modules over $R$ resembles the category of finite dimensional modules over
tame hereditary finite dimensional algebras.

However some peculiarities were also observed. For instance one finitely generated Cohen--Macaulay module has
Cantor--Bendixson rank $1$, in particular is non-isolated. This module is at the end of an almost split sequence
in the category of all (finitely generated or not) Cohen--Macaulay modules whose source is an infinitely generated
pure injective module. Furthermore, contrary to the case of finite dimensional algebras, none of indecomposable
infinitely generated pure injective Cohen--Macaulay modules is a direct summand of a direct product of finitely
generated ones.

Some non-finitely generated points of the Ziegler spectrum are direct limits of finitely generated Cohen--Macaulay
modules (along a ray of irreducible morphism), so we use these points to glue the two components of the
Auslander--Reiten quiver of $R$ obtaining a 2-dimensional surface  we call the Ringel quilt of $R$. The additional
devices used in this knitting procedure are irreducible morphisms and almost split sequences with infinitely
generated terms.

This gluing procedure was implemented by Ringel \cite{Rin00} when investigating modules over domestic string
algebras and gives a new insight on the global geometric structure of the category of finite dimensional modules.

We will show that over the $A_{\fty}$ plane singularity the category of finitely generated Cohen--Macaulay
modules can be neatly lodged on the M\"obius stripe. For instance morphisms in this category amount to easily
controlled walks on this surface. Using this representation we prove that the nilpotency index of the radical
of this category equals $\om+ 2$.

We will show that the problem of classifying points of the Ziegler spectrum of higher dimensional $A_{\fty}$
singularities $R_d$, $d\geq 2$ is wild. However there is a good chance to calculate the closure, in the Ziegler
spectrum, of the set of finitely generated Cohen--Macaulay points; but this is rather a task for future.

Many statements in this paper admit obvious generalizations, bur we decided to suppress writing them down.
A general philosophy is that when one starts developing a new theory a carefully calculated example creates a
better guide than general statements.

The projected audience of this paper is twofold: the experts in commutative algebra who are interested in learning
methods of model theory of modules; and also people from model theory of modules who are keen to step on the well
fertilized turf with a new exploring tool. It is quite difficult to make this text easily accessible to both
groups, and our exposition is clearly biased: we will be quite meticulous in explaining basics of the theory of
Cohen--Macaulay modules, but more sketchy on items in model theory of modules. For those the reader is referred
to Mike Prest's book \cite{Preb} which is not so short, but contains almost all references we need.

The author is indebted to Ivo Herzog for useful comments on preliminary versions of the paper.

\section{Basics}

Let $F$ be an algebraically closed field of characteristic not equal to $2$. In fact - see \cite[Rem. 1.2.22]{B-G} -
the results are likely to be true also when characteristic is equal to 2, but it is difficult to find proper
references. Let $R= F[[x,y]]/(x^2)$ be the factor of the power series ring by the ideal generated by $x^2$, the
so-called \emph{$A_{\fty}$ plane singularity} - see \cite{BGS}. It is known from this paper
(see also \cite[Thm. 14.16]{L-W}) that $R$ has a countable Cohen--Macaulay representation type, i.e. only countably
many indecomposable finitely generated Cohen--Macaulay modules.

We need few easily verified facts on the structure of $R$. Clearly $R$ is a commutative complete local noetherian
ring whose maximal ideal $\mm$ is generated by $x$ and $y$. From $x^2= 0$ and $R/xR\cong F[[y]]$ we conclude that
$xR\subset \mm$ are the only prime ideals of $R$, in particular $R$ has Krull dimension 1.

Recall that a (commutative noetherian local) ring $S$ is said to be \emph{Gorenstein} if the regular module $S_S$ has
a finite injective dimension; for instance $R$ is Gorenstein. Namely note that $y$ is a non-zero divisor in $R$
and $R/yR\cong F[[x]]/(x^2)$. Because the socle of this ring is simple, \cite[Prop. 21.5]{Eis} yields that
$R/yR$ is $0$-dimensional Gorenstein. Since $R$ is local, we conclude by \cite[3.1.19 b)]{B-H}.

Furthermore $R$ is clearly uniform, so is its quotient ring $Q$. It is easily checked that it suffices to invert $y$
(or any nonzero divisor) to get $Q$. Namely $s\in R$ is a non-zero divisor iff $s\notin xR$, hence $s= f(y)+ xg(y)$,
where $f\neq 0$. Multiplying by a unit in $F[[y]]$ we may assume that $s= y^n+ xg$, hence $s^{-1}= (y^n- xg) y^{-2n}$.

Since $R$ is Gorenstein, it follows that $Q$ is an indecomposable injective module. Furthermore $Q/R$ is the
injective envelope of the simple module $F$, hence the minimal injective cogenerator in the category of
$R$-modules. In particular $0\to R\to Q\to Q/R\to 0$ is the minimal injective resolution of $R$, therefore
$R$ has injective dimension 1.

Recall that $\wt R$ denotes the integral closure of $R$ in $Q$. The following is also straightforward.

\begin{remark}\label{val}
$\wt R$ is a non-noetherian valuation ring of Krull dimension 1 with the following chain of principal ideals:

$$
\wt R\supset y\wt R\supset y^2\wt R\supset \ldots \supset xy^{-1}\wt R\supset x\wt R\supset xy\wt R\supset \dots\,.
$$

In particular the ideal $\cap_n y^n \wt R= \cup_{m\in \Z}\, xy^m \Z$ is nilpotent and non-principal. Also
$\wt R$ is not finitely generated as an $R$-module.
\end{remark}
\begin{proof}
It is easily seen that $\wt R= R+ xQ$, where the inclusion $\sep$ is obvious, since every element of $xQ$ is
nilpotent.

As above (up to a multiplicative unit) every element from $\wt R\sm xQ$ is of the form $r= y^n+ x g(y)$.
Furthermore $r= y^n (1+ xy^{-n} g)$, where the last factor is a unit in $\wt R$. Thus the principal ideal
generated by $r$ equals $y^n \wt R$.

Similarly every cyclic $\wt R$-submodule of $xQ$ is generated by $xy^m$ for some integer $m$.
\end{proof}

Recall, see \cite[p. 1]{Yosh}, a definition of maximal Cohen--Macaulay modules. Let $S$ be a commutative noetherian
local ring of Krull dimension $d$ whose residue field is $F$. An $S$-module $N$ is said to be \emph{maximal
Cohen--Macaulay}, $\CM$ for short, if $\Ext^i(F,N)= 0$ for all $0\leq i< d$. We take this as a definition even
when $N$ is not finitely generated.

In our case this boils down to the following.

\begin{remark}\label{cm-def}
An $R$-module $N$ is Cohen-Macaulay if and only if $N$ has no $y$-torsion: $ny=0$ for some $n\in N$ implies $n=0$.
\end{remark}
\begin{proof}
Because $R$ is 1-dimensional, $N$ is $\CM$ iff $\Hom(F,N)=0$, i.e. $nx= ny= 0$ for some $n\in N$ implies $n= 0$.
Since $x^2=0$, this is clearly equivalent to the absence of $y$-torsion.
\end{proof}

\section{Finitely generated $\CM$-modules}\label{S-fg}

In this section we will recall the classification of indecomposable finitely generated $\CM$-modules over $R$.
By the above description we are in the framework of Bass' ubiquity paper \cite{Bass}, see also comments in
\cite[Thm. 4.18]{L-W}. For instance every indecomposable finitely generated $\CM$-module over $R$ is isomorphic
to an ideal of $R$ (equivalently to a finitely generated module between $R$ and $\wt R$). One can be even more
precise.

Let $I_n= (x, y^n)$, $n\geq 0$ be the ideal of $R$ generated by $x$ and $y^n$, in particular $I_0= R$ and
$I_1= \mm$. Further we set $I_{\fty}= xR$. The following is well known - see \cite[Exam. 6.5]{Yosh} or \cite{Sch}.

\begin{fact}\label{class-fg}
Each indecomposable finitely generated $\CM$-module over $R$ is isomorphic to $I_n$ for some $n\geq 0$, or to
$I_{\fty}$.
\end{fact}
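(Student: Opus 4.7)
The plan is to invoke Bass's result cited just above: every indecomposable finitely generated $\CM$-module over $R$ is isomorphic to an ideal of $R$. This reduces the classification to that of nonzero ideals $J$ of $R$ up to $R$-module isomorphism. I would split along the dichotomy given by the minimal prime $xR$: either $J\seq xR$, or $J$ contains an element whose image in $R/xR\cong F[[y]]$ is nonzero.

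In the first case $xJ=0$, so $J$ is a finitely generated module over $R/xR\cong F[[y]]$. Because $\CM$ forbids $y$-torsion (Remark \ref{cm-def}), $J$ is torsion-free over the DVR $F[[y]]$, hence free; indecomposability forces rank $1$; and the $R$-linear isomorphism $F[[y]]\cong xR$ sending $1\mapsto x$ identifies $J$ with $I_{\fty}$.

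In the second case, let $n$ be the minimum $y$-adic valuation of the image in $R/xR$ of elements of $J$, and pick a preimage $\al= y^n+ xh\in J$ with $h\in F[[y]]$ (achievable after rescaling $\al$ by a unit of $R$). The intersection $J\cap xR$ is a nonzero $R$-submodule of the rank-one $F[[y]]$-module $xR$, so it equals $xy^m R$ for some $m\geq 0$; moreover $m\leq n$ because $x\al= xy^n\in J\cap xR$. A short computation, using the minimality of $n$, then shows $J= R\al+ xy^m R$. The crux is to produce an $R$-module isomorphism $\phi\colon I_{n-m}\to J$ with $\phi(y^{n-m})= \al$ and $\phi(x)= xy^m$; well-definedness and injectivity both reduce to the single identity $x\al= xy^n= y^{n-m}\cdot xy^m$ (which itself rests on $x^2=0$), and this identity encodes the only module-theoretic relation between $y^{n-m}$ and $x$ inside $I_{n-m}$.

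The main obstacle, as I see it, is the careful bookkeeping in this last step: one must verify that the two chosen generators $\al$ and $xy^m$ of $J$ satisfy \emph{exactly} the relations that $y^{n-m}$ and $x$ satisfy inside $I_{n-m}$, so that the natural candidate map is forced to be an isomorphism rather than merely a surjection. Everything else is a matter of picking the generator $\al$ carefully. Combining the two cases delivers the announced list $\{I_n : n\geq 0\}\cup\{I_{\fty}\}$.
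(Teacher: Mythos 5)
Your proof is correct. The paper itself does not supply a proof of Fact~\ref{class-fg}: it performs the reduction to ideals (Bass' ubiquity, quoted in the paragraph preceding the statement) and then delegates to \cite[Exam.~6.5]{Yosh} and \cite{Sch}. You take the same Bass reduction and then carry out the classification of ideals by hand, which is in effect unpacking the cited references. The dichotomy $J\subseteq xR$ versus $J\not\subseteq xR$ is the right one; in the first case the argument via $R/xR\cong F[[y]]$ and torsion-freeness over a DVR is clean (indecomposability of ideals also follows for free from uniformity of $R$). In the second case the identification $J=R\al+xy^mR$ is correct once $\al$ is normalized and $J\cap xR=xy^mR$ is pinned down, and the presentation of $I_{n-m}$ on the generators $x$ and $y^{n-m}$ is generated by the two relations $x\cdot x=0$ and $y^{n-m}\cdot x - x\cdot y^{n-m}=0$; sending $x\mapsto xy^m$, $y^{n-m}\mapsto\al$ kills both (the first because $xy^m\in xR$, the second being your ``single identity''), and the kernel computation for surjectivity of $R^2\twoheadrightarrow J$ matches the kernel of $R^2\twoheadrightarrow I_{n-m}$ exactly, so the induced map is an isomorphism. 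The only small point worth making explicit when you write it up is the verification that $J\cap xR$ is an ideal of $F[[y]]$ and hence principal, and that it is nonzero because $x\al=xy^n$ lies in it.
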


For future use we represent these modules by the following diagrams.

\vspace{5mm}

\begin{center}
$
R\hspace{10mm}
\xymatrix@C=12pt@R=12pt{%
&*+={\bullet}\ar[dr]^y\ar[dl]_x\ar@{}+<0pt,10pt>*{_1}&&&\\
*+={\bullet}\ar[dr]_y&&*+={\bullet}\ar[dr]^y\ar[dl]_x&&&\\
&*+={\bullet}\ar[dr]_y&&*+={\bullet}\ar[dl]_x\ar@{}+<6pt,-6pt>*{.}\ar@{}+<10pt,-10pt>*{.}\ar@{}+<14pt,-14pt>*{.}\\
&&*+={\bullet}\ar@{}+<6pt,-6pt>*{.}\ar@{}+<10pt,-10pt>*{.}\ar@{}+<14pt,-14pt>*{.}&
}
$
\hspace{12mm}
$
I_1\hspace{10mm}
\xymatrix@C=12pt@R=12pt{%
*+={\bullet}\ar[dr]_y\ar@{}+<0pt,10pt>*{_x}&&*+={\bullet}\ar[dr]^y\ar[dl]_x\ar@{}+<0pt,10pt>*{_y}&\\
&*+={\bullet}\ar[dr]_y&&*+={\bullet}\ar[dl]_x\ar[dr]^y\\
&&*+={\bullet}\ar[dr]_y&&*+={\bullet}\ar[dl]_x\ar@{}+<6pt,-6pt>*{.}\ar@{}+<10pt,-10pt>*{.}\ar@{}+<14pt,-14pt>*{.}&\\
&&&*+={\bullet}\ar@{}+<6pt,-6pt>*{.}\ar@{}+<10pt,-10pt>*{.}\ar@{}+<14pt,-14pt>*{.}&&
}
$
\end{center}

\vspace{5mm}

\begin{center}
$
I_2\hspace{10mm}
\xymatrix@C=12pt@R=12pt{%
*+={\bullet}\ar@{}+<0pt,10pt>*{_x}\ar[dr]_y&&&\\
&*+={\bullet}\ar[dr]_y&&*+={\bullet}\ar@{}+<0pt,10pt>*{_{y^2}}\ar[dr]^y\ar[dl]_x&&&\\
&&*+={\bullet}\ar[dr]_y&&*+={\bullet}\ar[dl]_x\ar@{}+<6pt,-6pt>*{.}\ar@{}+<10pt,-10pt>*{.}\ar@{}+<14pt,-14pt>*{.}\\
&&&*+={\bullet}\ar@{}+<6pt,-6pt>*{.}\ar@{}+<10pt,-10pt>*{.}\ar@{}+<14pt,-14pt>*{.}&
}
$
\hspace{12mm}
$
I_{\infty}\hspace{10mm}
\xymatrix@C=12pt@R=12pt{%
*+={\bullet}\ar@{}+<0pt,10pt>*{_x}\ar[dr]_y&&&\\
&*+={\bullet}\ar[dr]_y&&&&\\
&&*+={\bullet}\ar@{}+<6pt,-6pt>*{.}\ar@{}+<10pt,-10pt>*{.}\ar@{}+<14pt,-14pt>*{.}&
}
$
\end{center}

\vspace{5mm}

Furthermore the following is the Auslander--Reiten quiver of the category of finitely generated $\CM$-modules.

$$
\vcenter{%
\def\labelstyle{\displaystyle}
\xymatrix@C=20pt@R=10pt{%
R\ar@<.5ex>[r]&I_1\ar@<.5ex>[r]\ar@<.5ex>[l]&I_2\ar@<.5ex>[r]\ar@<.5ex>[l]&\ar@<.5ex>[l]&\dots&
*+={I_{\infty}}\ar@(ur,dr)\\
}}
$$

\vspace{2mm}

Here the (left to right) irreducible morphisms $I_n\to I_{n+1}$ are given by multiplication by $y$; and the
(right to left) irreducible morphisms $I_{n+1}\to I_n$ are inclusions. Also the arrow $I_{\fty}\to I_{\fty}$
is given by multiplication by $y$.

For instance each $I_n$, $1\leq n< \fty$ is the source and the sink of the following $\AR$-sequence in the
category of finitely generated $\CM$-modules.

$$
0\to I_n\xr{\bsm 1\\y\esm} I_{n-1}\oplus I_{n+1}\xr{(y,-1)} I_n\to 0\,.
$$

Here, because we consider right modules, their morphisms act on the left.

Being Gorenstein, $R$ is a projective and injective object in the category of finitely generated $\CM$-modules,
therefore no $\AR$-sequence starts or ends in $R$.

These $\AR$-sequences and irreducible morphisms are clearly visible on the above diagrams. For instance the
irreducible map $I_1\xr{y} I_2$ amounts to dividing $x\in I_1$ by $y$.

\vspace{5mm}

\begin{center}
$
I_1\hspace{10mm}
\xymatrix@C=12pt@R=12pt{%
*+={\bullet}\ar@{}+<0pt,10pt>*{_x}\ar[dr]_y&&*+={\bullet}\ar[dr]^y\ar[dl]_{x}\ar@{}+<0pt,10pt>*{_y}&\\
&*+={\bullet}\ar[dr]_y&&*+={\bullet}\ar[dl]_x\ar@{}+<6pt,-6pt>*{.}\ar@{}+<10pt,-10pt>*{.}\ar@{}+<14pt,-14pt>*{.}&&\\
&&*+={\bullet}\ar@{}+<6pt,-6pt>*{.}\ar@{}+<10pt,-10pt>*{.}\ar@{}+<14pt,-14pt>*{.}&
}
$
\hspace{5mm}
$\xymatrix{\\\Longrightarrow}$
\hspace{10mm}
$
I_2\hspace{10mm}
\xymatrix@C=12pt@R=12pt{%
*+={\circ}\ar@{}+<0pt,10pt>*{_x}\ar@{.>}[dr]_y&&&\\
&*+={\bullet}\ar[dr]_y&&*+={\bullet}\ar@{}+<0pt,10pt>*{_{y^2}}\ar[dr]^y\ar[dl]_x&&&\\
&&*+={\bullet}\ar[dr]_y&&*+={\bullet}\ar[dl]_x\ar@{}+<6pt,-6pt>*{.}\ar@{}+<10pt,-10pt>*{.}\ar@{}+<14pt,-14pt>*{.}\\
&&&*+={\bullet}\ar@{}+<6pt,-6pt>*{.}\ar@{}+<10pt,-10pt>*{.}\ar@{}+<14pt,-14pt>*{.}&
}
$
\end{center}

\vspace{5mm}

Similarly the inclusion $I_1\subset R$ is shown by completing the square on the following diagram.

$$
\xymatrix@C=12pt@R=12pt{%
&*+={\circ}\ar@{.>}[dl]_x\ar@{.>}[dr]^y\ar@{}+<0pt,10pt>*{_1}&&\\
*+={\bullet}\ar[dr]_y&&*+={\bullet}\ar[dr]^y\ar[dl]_{x}&\\
&*+={\bullet}\ar[dr]_y&&*+={\bullet}\ar[dl]_x\ar@{}+<6pt,-6pt>*{.}\ar@{}+<10pt,-10pt>*{.}\ar@{}+<14pt,-14pt>*{.}&&\\
&&*+={\bullet}\ar@{}+<6pt,-6pt>*{.}\ar@{}+<10pt,-10pt>*{.}\ar@{}+<14pt,-14pt>*{.}&
}
$$

\vspace{3mm}

It follows from the above description that the $\AR$-quiver of $R$ has two components, - the first contains all
'finite' modules $I_n$; and the second consists of $I_{\fty}$. We show, on Figure \ref{fig1}, the extended version
of this quiver which contains both components, but also includes commutativity relations between irreducible maps.

\vspace{3mm}

\begin{figure}[b]
$$
\xymatrix@C=16pt@R=16pt{%
&&&*+={\bullet}\ar@{}+<8pt,8pt>*{_{I_{\fty}}}\ar[rd]^y
\ar@{}+<-6pt,6pt>*{.}\ar@{}+<-10pt,10pt>*{.}\ar@{}+<-14pt,14pt>*{.}&\\
*+={\bullet}\ar@{}+<-10pt,0pt>*{_R}\ar[rd]^y\ar@{}+<6pt,6pt>*{.}\ar@{}+<10pt,10pt>*{.}\ar@{}+<14pt,14pt>*{.}
\ar@{}+<0pt,20pt>*{.}\ar@{}+<0pt,26pt>*{.}\ar@{}+<0pt,32pt>*{.}&&
*+={\bullet}\ar@{}+<-10pt,0pt>*{_{I_2}}\ar[ld]\ar[rd]^y\ar@{}+<6pt,6pt>*{.}\ar@{}+<10pt,10pt>*{.}\ar@{}+<14pt,14pt>*{.}
\ar@{}+<-6pt,6pt>*{.}\ar@{}+<-10pt,10pt>*{.}\ar@{}+<-14pt,14pt>*{.}&&
*+={\bullet}\ar@{}+<8pt,8pt>*{_{I_{\fty}}}\ar[rd]^y\\
&*+={\bullet}\ar@{}+<-10pt,0pt>*{_{I_1}}\ar[ld]\ar[rd]^y&&*+={\bullet}\ar@{}+<-10pt,0pt>*{_{I_3}}\ar[ld]\ar[rd]^y
\ar@{}+<6pt,6pt>*{.}\ar@{}+<10pt,10pt>*{.}\ar@{}+<14pt,14pt>*{.}
&&*+={\bullet}\ar@{}+<8pt,8pt>*{_{I_{\fty}}}\ar[ld]
\ar@{}+<6pt,-6pt>*{.}\ar@{}+<10pt,-10pt>*{.}\ar@{}+<14pt,-14pt>*{.}\\
*+={\bullet}\ar@{}+<-10pt,0pt>*{_R}\ar[rd]^y&&*+={\bullet}\ar@{}+<-10pt,0pt>*{_{I_2}}\ar[ld]\ar[rd]^y
&&*+={\bullet}\ar@{}+<-10pt,0pt>*{_{I_4}}\ar[ld]
\ar@{}+<6pt,-6pt>*{.}\ar@{}+<10pt,-10pt>*{.}\ar@{}+<14pt,-14pt>*{.}\\
&*+={\bullet}\ar@{}+<-10pt,0pt>*{_{I_1}}\ar[ld]\ar[rd]^y&&*+={\bullet}\ar@{}+<-10pt,0pt>*{_{I_3}}\ar[ld]
\ar@{}+<6pt,-6pt>*{.}\ar@{}+<10pt,-10pt>*{.}\ar@{}+<14pt,-14pt>*{.}&\\
*+={\bullet}\ar@{}+<-10pt,0pt>*{_R}\ar@{}+<6pt,-6pt>*{.}\ar@{}+<10pt,-10pt>*{.}\ar@{}+<14pt,-14pt>*{.}
\ar@{}+<0pt,-20pt>*{.}\ar@{}+<0pt,-26pt>*{.}\ar@{}+<0pt,-32pt>*{.}&&*+={\bullet}\ar@{}+<-10pt,0pt>*{_{I_2}}
\ar@{}+<6pt,-6pt>*{.}\ar@{}+<10pt,-10pt>*{.}\ar@{}+<14pt,-14pt>*{.}
\ar@{}+<-6pt,-6pt>*{.}\ar@{}+<-10pt,-10pt>*{.}\ar@{}+<-14pt,-14pt>*{.}
&&&\\
&&&&&&&
}
$$
\caption{}\label{fig1}
\end{figure}

\vspace{3mm}

Here the copies of $R$ form the left (vertical) boarder of this diagram. Furthermore the line consisting of
copies of $I_{\fty}$ goes in southeastern direction and forms another boarder. Note that starting with a copy of
$R$ and moving in the northeastern direction along the coray $R\gets I_1\gets I_2\gets\dots$ of irreducible maps
we obtain $I_{\fty}$ as the inverse limit (i.e. the intersection) of the corresponding inverse system.

\section{Free realizations of pp-formulae and patterns}\label{S-free}

We will briefly recall some notions of model theory of modules. In the first part of this section $S$ will denote an
arbitrary commutative ring. For more explanations the reader is referred to \cite[Ch. 1--4]{Preb}.

A \emph{positive-primitive formula} $\phi(v)$ in one free variable $v$ is an existentially quantified formula
$\ex\, \ov v\, (\ov v A= v \bar b)$, where $\ov v= (v_1, \dots, v_k)$ is a tuple of bounded variables, $A$ is a
$k\times n$ matrix over $S$ and $\ov b$ is a row of elements of $S$ of length $n$. If $M$ is an $S$-module and
$m\in M$ we say that \emph{$M$ satisfies $\phi(m)$}, written $M\models \phi(m)$, if there exists a tuple
$\ov m= (m_1, \dots, m_k)$ of elements from $M$ such that $\ov m A= m\bar b$.

This pp-formula claims the decidability, on $m$ in $M$, of the system of linear equations given by the matrix
$(A|\ov b)$. For instance if $r\in S$ then $vr= 0$ is the \emph{annihilator formula}, and
$r\mid v\doteq \ex\, w\, (wr=v)$ is the \emph{divisibility formula}.

Since $S$ is commutative, the set $\phi(M)= \{m\in M\mid M\ms \phi(m)\}$ is a submodule of $M$. For instance
$(r\mid v)(M)= Mr$, and $(vr=0)(M)$ is the kernel of right multiplication by $r$.

If $\phi$ and $\psi$ are pp-formulae we say the $\phi$ \emph{implies} $\psi$, written $\phi\to \psi$ or
$\phi\leq \psi$, if $\phi(M)\seq \psi(M)$ for any module $M$. These formulas are said to be \emph{equivalent} if
both $\phi\leq \psi$ and $\psi\leq \phi$ hold. The equivalence classes of pp-formulae form a modular lattice $L$ of
pp-formulae over $S$. The meet in this lattice is the conjunction of pp-formulae, and the sum is the pp-formula
$(\phi+ \psi)(v)\doteq \ex\, w\, \phi(v)\wg \psi(v-w)$.

Each pp-formula $\phi$ has a finitely generated \emph{free realization}, i.e. a finitely generated module $M$
pointed at an element $m$ such that 1) $M\ms \phi(m)$ and 2) if $M\ms \psi(m)$ for some pp-formulae $\psi$ then
$\phi\leq \psi$. For instance the pointed module $(S,r)$ is a free realization of the divisibility formula $r\mid v$,
and the module $S/rS$ pointed at $\ov 1$ is a free realization of the annihilator formula $vr=0$.

Suppose $(M,m)$ is a free realization of a pp-formula $\phi$ and $(N,n)$ is a free realization of a pp-formula
$\psi$. Then $\psi\leq \phi$ iff there exists a \emph{pointed morphism}, i.e. a morphism $f: M\to N$ of $S$-modules
such that $f(m)= n$.

Algebraically pp-formulae in one free variable can be defined as finitely generated subfunctors of the
functor $\Hom(S,-)$ from the category of $S$-modules to the category of abelian groups. For instance the
annihilator formula $vr=0$ corresponds to the functor $\Hom(S/rS,-)$, and the divisibility formula $r\mid v$
gives rise to the functor $G$ such that $G(M)= Mr$ for each module $M$.

Recall (see \cite[Sect. 3.4]{Preb}) that a class of $S$-modules $\mD$ is  said to be \emph{definable} if it is closed
with respect to direct products, direct limits and pure submodules. In fact this class consists of models of a
uniquely determined first order theory $T$.

The notion of the lattice of pp-formulae can be relativized to any definable class. Namely for pp-formulas
$\phi, \psi$ we set $\phi\leq_T \psi$ if $\phi(M)\seq \psi(M)$ for any module in $\mD$. The corresponding lattice of
pp-formulae $L_T$ will be the factor of the lattice $L$ of all pp-formulae. For instance, if $\mD_M$ is the smallest
definable class containing a module $M$, then we obtain the lattice $L_M$ of \emph{pp-definable submodules} of $M$.

The class of $\CM$-modules over $R$ is clearly definable, hence we obtain the theory $T_{CM}$ of Cohen-Macaulay
modules over $R$, and could relativize the above notions to this theory. For instance $L_{CM}$ will denote the
lattice of pp-formulae in this theory, which is a factor of the lattice of pp-formulae by a certain
congruence relation. In fact it is not difficult to choose a canonical representative in each equivalence class.

Namely let $\phi$ be a pp-formula freely realized by a pair $m\in M$, where $M$ is a finitely generated $R$-module.
If $\ov M$ is a factor of $M$ by its $y$-torsion submodule, it is a $\CM$-module. Let $\ov m$ denote the image
of $m$ in $\ov M$ and let a pp-formula $\phi_{CM}$ generate the pp-type of $\ov m$ in $\ov M$. Clearly we have the
implication $\phi_{CM}\to \phi$, and both formulas are equivalent in $T_{CM}$.

Thus the lattice $L_{CM}$ of pp-formulae in $T_{CM}$ can be defined as a lattice of pointed finitely generated
$\CM$-modules. Namely if $m\in M$ represents a pp-formula $\phi$ and $n\in N$ gives $\psi$, then the
pair $(m,n)\in M\oplus N$ represents $\phi+ \psi$. Their conjunction $\phi\wg \psi$ is given by the factor
of the following pushout module $(K,k)$ by its $y$-torsion submodule.

$$
\xymatrix@C=14pt@R=20pt{%
(R,1)\ar[r]\ar[d]&(M,m)\ar@{-->}[d]&\\
(N,n)\ar@{-->}[r]&(K,k)\ar[r]^{\pi}& (\ov K,\ov k)
}
$$

We will often identify pp-formulas in $L_{CM}$ with their free realizations in finitely generated $\CM$-modules and
refer to them as \emph{$\CM$-formulae}.

Now we consider the so-called patterns of pointed finitely generated $\CM$-modules over $R$. The term is borrowed
from Ringel \cite[Sect. 3]{Rin80} where he calculates the hammock functors: the traces $\Hom(N,-)$ of simple regular
modules $N$ in indecomposable finite dimensional modules over tame hereditary finite dimensional algebras.

A \emph{pointed $\CM$-module} is a pair $(M,m)$, where $M$ is a $\CM$-module and $m$ is an element of $M$:
we will often write this as $m\in M$. This pair can be also thought of as a morphism $R\to M$ sending $1$ to $m$.
A \emph{pointed morphism} (or just morphism) of pointed modules $(M,m)$ and $(N,n)$ is a morphism $f: M\to N$
of $R$-modules such that $f(m)= n$.

Let $(M,m)$ be an indecomposable finitely generated pointed $\CM$-module. We will introduce a \emph{$\CM$-pattern}
(or just pattern) $P$ of $(M,m)$, which is a partially ordered set, as follows. The elements of $P$ will be
equivalence classes of pointed morphisms from $(M,m)$ to indecomposable finitely generated $\CM$-modules.

Let $f: (M,m)\to (N,n)$ and $g: (M,m)\to (K,k)$ be pointed morphisms in $P$. We set $f\geq g$ if there exists
a morphism $h: N\to K$ such that $h(n)= k$. Clearly this relation is reflexive and transitive. To make it
anti-symmetric we factor $P$ by the equivalence relation $f\sim g$ if $f\geq g$ and $g\geq f$.

Usually the structure of $P$ is quite involved. The following example of a pattern will be of crucial importance
for this paper.

\begin{prop}\label{patt}
Figure \ref{fig2} shows the pattern of the pointed module $x\in I_{\fty}$.
\end{prop}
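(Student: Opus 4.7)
The plan is to enumerate all pointed morphisms from $(I_\fty, x)$ to indecomposable finitely generated $\CM$-modules, identify their equivalence classes, compute the partial order, and then match the resulting Hasse diagram with Figure \ref{fig2}. The starting observation is that $I_\fty = xR \cong R/xR$ is cyclic, with generator $x$ subject to the single relation $x\cdot x = 0$. Hence a pointed morphism $(I_\fty, x) \to (N, n)$ is uniquely determined by the image $n$ of $x$, and exists iff $nx = 0$. Thus pointed morphisms to $(N, -)$ correspond bijectively to elements of $\ann_N(x)$, and the first task is to compute this annihilator for each $N$ in the list $R = I_0, I_1, I_2, \dots, I_\fty$ of Fact \ref{class-fg}.

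A direct inspection shows that in every case $\ann_N(x)$ coincides with the copy of $xR$ sitting inside $N$, which is a cyclic module over $R/xR \cong F[[y]]$. Modulo multiplication by units of $F[[y]]$ -- the automorphisms of $N$ that yield pointed-morphism-equivalent choices of $n$ -- the nonzero representatives are $n = xy^i$ for $i \geq 0$. So the candidate representatives of classes in the pattern are exactly the pairs $(I_n, xy^i)$ with $0 \leq n \leq \fty$ and $i \geq 0$, together with the zero morphism.

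Next I will compute the partial order explicitly. Using the presentation of $I_n$ (finite $n$) by generators $e_1 = x$, $e_2 = y^n$ with relations $xe_1 = 0$ and $xe_2 = y^n e_1$, any $h: I_n \to I_m$ has the form $h(e_1) = xc$ and $h(e_2) = cy^n + xd$ for some $c, d \in R$, and the membership $h(e_2) \in I_m$ forces $c \in y^{\max(0,\, m-n)} F[[y]] + xR$. Writing $c = c_0(y) + xc_1(y)$, the condition $h(xy^i) = xy^j$ reduces to $c_0 y^i = y^j$ in $F[[y]]$, forcing $c_0 = y^{j-i}$ when $j \geq i$ and being unsolvable otherwise. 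Combined with the constraint on $c_0$ this yields the rule $(I_n, xy^i) \geq (I_m, xy^j)$ iff $j - i \geq \max(0,\, m - n)$. A parallel calculation shows $(I_\fty, xy^i) \geq (I_m, xy^j)$ iff $j \geq i$ for any $m$; in the opposite direction, a morphism $h: I_n \to I_\fty$ from finite $n$ must send $h(y^n)$ into $xR$, and then $xh(y^n) = y^n h(x)$ combined with regularity of $y^n$ modulo $xR$ forces $h(x) \in x^2 R = 0$, so no pointed morphism from a finite $(I_n, xy^i)$ to $(I_\fty, xy^j)$ with $j < \fty$ exists. A short symmetry argument on these conditions shows that distinct pairs $(n, i)$ give distinct equivalence classes and that the zero morphism is the unique minimum.

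Finally, introducing coordinates $(a, b) := (i - n, i)$ on the finite part makes the pattern order-isomorphic to $\{(a, b) \in \Z \times \N : a \leq b\}$ with componentwise order; the two covering relations $(a, b) \to (a + 1, b)$ and $(a, b) \to (a, b + 1)$ correspond respectively to the inclusion $I_{n+1} \hookrightarrow I_n$ and to the irreducible multiplication $y: I_n \to I_{n+1}$ of the AR-quiver, so the horizontal and vertical steps in the pattern are precisely the irreducible morphisms and multiplication by $y$ within a fixed $I_n$ is a composition of the two. The column $\{(I_\fty, xy^b)\}_{b \geq 0}$ attaches as the limit at $a = -\infty$, each $(I_\fty, xy^b)$ dominating all finite $(I_m, xy^j)$ with $j \geq b$ but never being reached from the finite part, realizing $I_\fty$ as the inverse limit of the coray $R \gets I_1 \gets I_2 \gets \cdots$ at the pointed level. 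The main technical obstacle is the explicit $\Hom$-computation and the delicate asymmetry between $\Hom(I_n, I_\fty)$ and $\Hom(I_\fty, I_n)$; once these are settled, reading off the Hasse diagram and comparing with Figure \ref{fig2} is routine.
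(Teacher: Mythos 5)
Your proposal is correct and follows essentially the same route as the paper: identify that pointed morphisms from $(I_\fty,x)$ are determined by the image of $x$ in $\ann_N(x)=xR$, reduce to representatives $xy^i$, and then work out the partial order, which the paper leaves to "inspection" and you make explicit via a $\Hom(I_n,I_m)$ computation. The only quibble is that your coordinate change $(a,b)=(i-n,i)$ is order-reversing rather than order-preserving onto $\{(a,b):a\le b\}$ with the componentwise order, but this is a harmless sign convention and does not affect the identification of the Hasse diagram with Figure~\ref{fig2}.
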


\begin{figure}
$$
\vcenter{%
\xymatrix@C=20pt@R=16pt{%
&&&&*+={\bullet}\ar@{-}[dr]\ar@{}+<0pt,10pt>*{_{x\in I_{\fty}}}&&\\
&&&&&*+={\bullet}\ar@{-}[dr]\ar@{}+<12pt,10pt>*{_{xy\in I_{\fty}}}&&\\
&&*+={\bullet}\ar@{-}[dr]\ar@{-}[dl]
\ar@{}+<-12pt,8pt>*{_{x\in I_2}}\ar@{}+<6pt,6pt>*{.}\ar@{}+<10pt,10pt>*{.}\ar@{}+<14pt,14pt>*{.}&&&&
*+={\bullet}\ar@{}+<12pt,10pt>*{_{xy^2\in I_{\fty}}}
\ar@{}+<6pt,-6pt>*{.}\ar@{}+<10pt,-10pt>*{.}\ar@{}+<14pt,-14pt>*{.}\\
&*+={\bullet}\ar@{-}[dr]\ar@{-}[dl]\ar@{}+<-12pt,8pt>*{_{x\in I_1}}&&
*+={\bullet}\ar@{-}[dr]\ar@{-}[dl]\ar@{}+<-6pt,8pt>*{_{xy\in I_3}}
\ar@{}+<6pt,6pt>*{.}\ar@{}+<10pt,10pt>*{.}\ar@{}+<14pt,14pt>*{.}&&&\\
*+={\bullet}\ar@{-}[dr]\ar@{}+<-12pt,8pt>*{_{x\in R}}&&
*+={\bullet}\ar@{-}[dr]\ar@{-}[dl]\ar@{}+<-6pt,8pt>*{_{xy\in I_2}}&&
*+={\bullet}\ar@{-}[dl]\ar@{}+<-6pt,8pt>*{_{xy^2\in I_4}}
\ar@{}+<6pt,6pt>*{.}\ar@{}+<10pt,10pt>*{.}\ar@{}+<14pt,14pt>*{.}
\ar@{}+<6pt,-6pt>*{.}\ar@{}+<10pt,-10pt>*{.}\ar@{}+<14pt,-14pt>*{.}&&\\
&*+={\bullet}\ar@{-}[dr]\ar@{-}[dl]\ar@{}+<-6pt,8pt>*{_{xy\in I_1}}&&
*+={\bullet}\ar@{-}[dl]\ar@{}+<-6pt,8pt>*{_{xy^2\in I_3}}
\ar@{}+<6pt,-6pt>*{.}\ar@{}+<10pt,-10pt>*{.}\ar@{}+<14pt,-14pt>*{.}&&&\\
*+={\bullet}\ar@{-}[dr]\ar@{}+<-6pt,8pt>*{_{xy\in R}}&&
*+={\bullet}\ar@{-}[dl]\ar@{}+<-6pt,8pt>*{_{xy^2\in I_2}}\ar@{}+<6pt,-6pt>*{.}
\ar@{}+<10pt,-10pt>*{.}\ar@{}+<14pt,-14pt>*{.}&&&&\\
&*+={\bullet}\ar@{-}[dl]\ar@{}+<-6pt,8pt>*{_{xy^2\in I_1}}
\ar@{}+<6pt,-6pt>*{.}\ar@{}+<10pt,-10pt>*{.}\ar@{}+<14pt,-14pt>*{.}&&&&&\\
*+={\bullet}\ar@{}+<-8pt,8pt>*{_{xy^2\in R}}
\ar@{}+<6pt,-6pt>*{.}\ar@{}+<10pt,-10pt>*{.}\ar@{}+<14pt,-14pt>*{.}&&&&&&\\
}}
$$
\caption{}\label{fig2}
\end{figure}

Observe that this diagram is just a filter in the $\AR$-quiver of $R$ - see Figure \ref{fig1}.

\begin{proof}
Every nonzero morphism $f$ from $I_{\fty}$ to an indecomposable finitely generated $\CM$-module $N$ sends
$x$, up to a multiplicative unit in $F[[y]]$, to $xy^i$, hence $f$ is given by multiplication by $y^i$. Clearly
multiplication by this unit does not change the equivalence class of $f$.

The remaining part is by inspection. For instance $xy\in I_2\geq xy\in I_1$ by inclusion, and
$xy\in I_2\geq xy^2\in I_3$ via multiplication by $y$.
\end{proof}

Thus on the diagram we see the traces of the functor $\Hom(I_{\fty},-)$ on indecomposable finitely generated
$\CM$-modules. The pattern of another pointed module $x\in R$ consists of points on Figure \ref{fig2} located
below (and including) this module. 

If $\phi$ and $\psi$ are pp-formulae then $[\phi\wg \psi, \phi]$ denotes the corresponding interval in the
lattice of pp-formulae (or some relativized version of it). We will often refer to this interval by saying
'$\phi$ over $\psi$'. For instance the pair of pp-formulas $\psi< \phi$ is said to be \emph{minimal} in a theory $T$
if the interval $[\psi, \phi]_T$ is simple.

\begin{lemma}\label{dist}
The interval $[v=0, vx=0]$ in the lattice of $\CM$ formulae over $R$ is distributive.
\end{lemma}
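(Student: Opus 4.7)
The plan is to embed the interval $[v=0,vx=0]$ of $L_{CM}$ into a product of chains, which is automatically distributive. For any $\phi$ in the interval and any $\CM$-module $M$, the set $\phi(M)$ is an $R$-submodule of $M$ (because $R$ is commutative) contained in $\Ann_M(x)$, hence annihilated by $x$. Consequently $\phi(M)$ is naturally a module over $R/xR\cong F[[y]]$.

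Next I would compute $\Ann_N(x)$ for each indecomposable finitely generated $\CM$-module $N$ from Fact 3.1. Using the normal form $r=f(y)+xg(y)$ and the identity $x^2=0$, a direct calculation shows that in each of $R$, $I_n$ ($n\geq 1$) and $I_\fty$ one has $\Ann_N(x)=xR$; and the map $f\mapsto xf$ identifies this with $F[[y]]$ as $F[[y]]$-modules, so it is free of rank one. Since $F[[y]]$ is a DVR, every $F[[y]]$-submodule of $xR$ is of the form $xy^{k_N(\phi)}F[[y]]$ for a unique $k_N(\phi)\in\N\cup\{\fty\}$, with $\fty$ encoding the zero submodule. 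Hence each $\phi$ in the interval is encoded by the tuple $(k_N(\phi))_N\in\prod_N(\N\cup\{\fty\})$.

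The assignment $\phi\mapsto(k_N(\phi))_N$ is a lattice homomorphism: $(\phi+\psi)(N)=\phi(N)+\psi(N)$ translates to $k_N(\phi+\psi)=\min(k_N(\phi),k_N(\psi))$, while $(\phi\wg\psi)(N)=\phi(N)\cap\psi(N)$ translates to $k_N(\phi\wg\psi)=\max(k_N(\phi),k_N(\psi))$. It is injective because every finitely generated $\CM$-module is a finite direct sum of the indecomposables of Fact 3.1 by Krull--Schmidt over the complete local ring $R$, pp-formulas split along such direct sums, and pp-formulas in the definable class $T_{CM}$ are determined by their values on the finitely generated members via commutation with direct limits. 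The target $\prod_N(\N\cup\{\fty\})$ with componentwise $\min$ and $\max$ is a product of chains and hence distributive; the interval embeds as a sublattice of it, so it too is distributive. The main technical step is the uniform calculation $\Ann_N(x)=xR\cong F[[y]]$ across all three types of indecomposable $N$; once this rank-one fact is in place, distributivity falls out at once from the chain structure of ideals in the DVR $F[[y]]$.
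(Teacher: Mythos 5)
Your proof is correct and follows essentially the same strategy as the paper: reduce to showing the interval evaluates to a chain on each indecomposable finitely generated $\CM$-module, which in turn rests on the observation that $\Ann_N(x)\cong F[[y]]$ has a totally ordered submodule lattice. The only difference is that you unpack, via the explicit embedding into a product of chains, the reduction step that the paper simply imports by citing \cite[Prop. 4.4]{P-P} together with Remark~\ref{dense}.
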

\begin{proof}
By Remark \ref{dense} and similarly to \cite[Prop. 4.4]{P-P} it suffices to prove that, when evaluated on any
indecomposable finitely generated $\CM$ module $M$, this interval is a chain. But this is obvious, because
every cyclic submodule of $(vx=0)(M)$ is generated by $xy^i$ for some $i$, and such submodules are linearly
ordered by inclusion.
\end{proof}

Now we are in a position to show that the diagram on Figure \ref{fig2} describes the above interval.

\begin{prop}\label{int-desc}
1) Every $\CM$-formula in the interval $[v=0, vx=0]$ is equivalent to a finite sum of marked on Figure \ref{fig2}
formulas.

2) This sum is free in the following sense. If $\phi_i$ and $\psi_j$ are finite sets of marked formulae,
then $\sum_i \phi_i$ implies $\sum_j \psi_j$ if and only if for every $i$ there exists $j$ such that
$\phi_i\leq \psi_j$.

3) The intersection of marked formulae equals the marked intersection, but this is not true for sums.
\end{prop}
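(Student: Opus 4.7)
The plan is to prove the three parts in sequence, using the pattern structure from Proposition \ref{patt} together with the distributivity of Lemma \ref{dist}.

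For part (1), I take an arbitrary $\CM$-formula $\phi \in [v=0, vx=0]$ with a finitely generated $\CM$-free realization $(M,m)$; since $\phi$ implies $vx=0$ we have $mx=0$, and by quotienting $M$ by its $y$-torsion we may assume $M$ is $\CM$. Decompose $M = \bigoplus_i M_i$ into indecomposable summands using Fact \ref{class-fg} and write $m=\sum_i m_i$. Each nonzero $m_i$ lies in the $x$-socle of $M_i$: in $I_l$ with $l<\fty$ this socle is the cyclic $F[[y]]$-module $xR$, and in $I_\fty$ it is all of $I_\fty\cong F[[y]]$. In either case $m_i = xy^{k_i}\cdot u_i$ for a unit $u_i\in F[[y]]^\times$, so $(M_i,m_i)$ is equivalent to a pointed module marked on Figure \ref{fig2}. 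Consequently $\phi=\sum_i \phi_i$ is a finite sum of marked formulas (omitting the trivial summands).

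For part (2), the nontrivial direction is: if $\phi_i$ is marked and $\phi_i\leq\sum_j\psi_j$ then $\phi_i\leq\psi_{j_0}$ for some $j_0$. By the morphism criterion for free realizations, the hypothesis yields $R$-linear maps $g_j\colon N_j\to M_i$ with $\sum_j g_j(n_j)=m_i$. Since each marked element $n_j$ is annihilated by $x$, so is each $g_j(n_j)$, which therefore lies in the $x$-socle of $M_i$; writing $g_j(n_j)=xs_j$ with $s_j\in F[[y]]$ and $m_i=xy^{k_i}$, the equality $\sum_j s_j=y^{k_i}$ holds in $F[[y]]$. This forces $v_y(s_{j_0})\leq k_i$ for some $j_0$, so $s_{j_0}=y^{k_0}u$ with $u$ a unit in $F[[y]]$ and $k_0\leq k_i$. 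Post-composing $g_{j_0}$ with multiplication by $y^{k_i-k_0}u^{-1}\in F[[y]]\subseteq R$ (an $R$-linear endomorphism of $M_i$) produces a morphism $N_{j_0}\to M_i$ sending $n_{j_0}$ to $m_i$, witnessing $\phi_i\leq\psi_{j_0}$.

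For part (3), the intersection statement follows by combining parts (1) and (2) with distributivity. Let $\chi$ be the greatest marked formula below both $\phi$ and $\psi$ (a direct inspection of Figure \ref{fig2} shows such a maximum exists); then $\chi\leq\phi\wedge\psi$ trivially. Conversely, by part (1) write $\phi\wedge\psi=\sum_k\chi_k$ with each $\chi_k$ marked. Each $\chi_k\leq\phi\wedge\psi\leq\phi,\psi$, so by the maximality of $\chi$ each $\chi_k\leq\chi$, giving $\phi\wedge\psi\leq\chi$. For the failure of the corresponding statement about sums, take $\phi_1=(x\in R)$ and $\phi_2=(xy\in I_\fty)$: these are incomparable, their unique marked common upper bound in Figure \ref{fig2} is $(x\in I_\fty)=(vx=0)$, but the direct computation $\phi_1(M)=xM$, $\phi_2(M)=y\cdot\Ann_M(x)$ yields $(\phi_1+\phi_2)(I_\fty)=yI_\fty\subsetneq I_\fty=(vx=0)(I_\fty)$, so $\phi_1+\phi_2$ is strictly below $(vx=0)$ and hence not equivalent to any marked formula.

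The main obstacle is the decomposition argument in part (2): one must extract a single morphism out of the sum $\sum_j g_j(n_j)=m_i$. The step that makes it work is identifying the $x$-socle of $M_i$ with a cyclic $F[[y]]$-module, which turns the problem into a single equation in $F[[y]]$ and allows $g_{j_0}$ to be corrected by multiplication with a power of $y$ times a unit so as to hit $m_i$ exactly.
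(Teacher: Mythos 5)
Your proof is correct and follows the same overall strategy as the paper (decomposition into indecomposables, the pointed-morphism criterion for free realizations, and the poset structure of Figure~\ref{fig2}), but with two noteworthy differences. In part (2), the paper's argument is short: it evaluates $\phi\to\sum_j\psi_j$ on the free realization $(N,n)$ of $\phi$ and invokes uniseriality of the $x$-socle (Lemma~\ref{dist}) to conclude that $n\in\psi_{j_0}(N)$ for a single $j_0$. You instead make the uniseriality explicit by identifying the $x$-socle of $M_i$ with the cyclic $F[[y]]$-module $xF[[y]]$, reducing $\sum_j g_j(n_j)=m_i$ to an equation $\sum_j s_j=y^{k_i}$ in $F[[y]]$, picking out $j_0$ by comparing $y$-adic valuations, and correcting $g_{j_0}$ by an $R$-linear multiplication to hit $m_i$ on the nose. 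This is a constructive unwinding of the same uniseriality fact; it is slightly longer but makes transparent exactly where the valuation ring structure of $F[[y]]$ enters. In part (3) you supply a concrete counterexample $(\phi_1,\phi_2)=((x\in R),(xy\in I_\fty))$ for the failure of sums, including the evaluation $(\phi_1+\phi_2)(I_\fty)=yI_\fty\subsetneq I_\fty$, which the paper leaves as an assertion; this is a useful addition, and your computations of $\phi_1(M)=Mx$ and $\phi_2(M)=y\cdot\ann_M(x)$ are correct. The remaining parts (existence of the marked meet, the maximality argument) match the paper's reasoning.
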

\begin{proof}
1) Because $x\in I_{\fty}$ is a free realization of the annihilator formula $vx=0$, every $\CM$-formula $\psi$
below $vx=0$ corresponds to a pointed morphism from $x\in I_{\fty}$ to a finitely generated $\CM$-module $N$.
Decomposing $N$ into a direct sum of indecomposables we conclude that $\psi$ is equivalent to a finite sum of
$\CM$ formulae $\psi_j$ given by pointed morphisms from $I_{\fty}$ to indecomposable finitely generated
$\CM$-modules.

By the description of such morphisms ($x$ goes to $xy^i$) we derive that $\psi_j$ is equivalent to a marked
formula on the diagram.

2) Clearly it suffices to consider the implication $\phi\to \psi_1+ \dots+ \psi_m$ for $\CM$-formulas on the diagram.
Let the pair $(N,n)$ on the diagram represents $\phi$. Evaluating on $N$, by uniseriality, we conclude that
$\phi(N)\seq \psi_j(N)$ for some $j$. But then $\phi$ implies $\psi_j$ by the definition of a free realization.

3) Let $\phi$ and $\psi$ be incomparable $\CM$-formulae marked on the diagram. By what we have proved the
$\CM$-formula $\phi\wg \psi$ is equivalent to a sum of formulae on the diagram which lie below $\phi$
and $\psi$. But from the diagram we see that there is a unique largest element in this set, hence it must be
equal to the conjunction.
\end{proof}

Now the $\CM$-formulas in the interval $[v=0, vx=0]$ are easily visualized. For instance, the interval
$x\in I_{\fty}$ over $xy\in I_{\fty}$ in $L_{CM}$ is the following chain of order type $\om+ 1$.

$$
\vcenter{%
\xymatrix@C=20pt@R=16pt{%
*+={\bullet}\ar@{}+<18pt,6pt>*{_{x\in I_{\fty}}}\\
&*+={\bullet}\ar@{}+<38pt,4pt>*{_{(x\in I_1)+\, (xy\in I_{\fty})}}\ar@{-}[rd]
\ar@{}+<-4pt,4pt>*{.}\ar@{}+<-8pt,7pt>*{.}\ar@{}+<-12pt,10pt>*{.}\\
&&*+={\bullet}\ar@{}+<38pt,4pt>*{_{(x\in R)+\, (xy\in I_{\fty})}}\ar@{-}[rd]\\
&&&*+={\bullet}\ar@{}+<20pt,2pt>*{_{xy\in I_{\fty}}}\\
}}
$$

\vspace{3mm}

In particular the pair $(x\in R)+ (xy\in I_{\fty})$ over $xy\in I_{\fty}$ is minimal in $T_{CM}$.

Similarly the interval $x\in I_1$ over $x\in R$ in $L_{CM}$ is the following chain of order type $1+ \om^*$.

$$
\vcenter{%
\xymatrix@C=20pt@R=18pt{%
&&&*+={\bullet}\ar@{}+<14pt,-2pt>*{_{x\in I_1}}\ar@{-}[ld]\\
&&*+={\bullet}\ar@{}+<36pt,-2pt>*{_{(x\in R)+\, (xy\in I_2)}}\ar@{-}[ld]\\
&*+={\bullet}\ar@{}+<38pt,-2pt>*{_{(x\in R)+\, (xy^2\in I_3)}}
\ar@{}+<-4pt,-4pt>*{.}\ar@{}+<-8pt,-8pt>*{.}\ar@{}+<-12pt,-12pt>*{.}\\
*+={\bullet}\ar@{}+<12pt,-2pt>*{_{x\in R}}\\
}}
$$

\vspace{3mm}

Furthermore the pair $x\in I_1$ over $(x\in R)+ (xy\in I_2)$ is minimal in $T_{CM}$ and corresponds to the
left almost split map\rule{0pt}{6mm} $I_1\xr{\bsm 1\\y\esm} R\oplus I_2$. Also the pair $x\in R$ over
$xy\in I_1$ is minimal in $T_{CM}$ and corresponds to the irreducible map $R\xr{y} I_1$ in the category
of finitely generated $\CM$-modules.

Note that the whole lattice of $\CM$-formulae over $R$ is more complex and involves rather an unintelligent
picture; for instance it is not distributive. Namely otherwise $R$ would have a distributive lattice of
ideals. Since $R$ is local it would yield that $R$ is a valuation domain, a contradiction.

\section{The Ziegler spectrum}\label{S-zig}

First we recall few more definitions from model theory of modules. A submodule $M$ of a module $N$ is said to
be \emph{pure} if, for every $m\in M$ and a pp-formula $\phi$, from $N\ms \phi(m)$ it follows that $N\ms \phi(n)$;
and this definition is obviously extended to embeddings of modules.

A module $M$ is said to be \emph{pure injective}, if it injective with respect to pure monomorphisms of modules. An
equivalent requirement is that $M$ splits any pure embedding. For instance each injective module is pure injective.
Furthermore by Ringel (see \cite[L. 4.2.8]{Preb}) each module which is linearly compact over its  endomorphism ring
is pure injective. Because $R$ is a complete noetherian ring, it is linearly compact, hence pure injective. Since
linearly compactness respects extensions, each finitely generated $R$-module is also pure injective.

The \emph{Ziegler spectrum} of a ring $R$, $\Zg_R$, is a topological space whose points are isomorphism types
of indecomposable pure injective modules. The topology on this space is given by pairs $(\phi/\psi)$ of
pp-formulae. Here the open set $(\phi/\psi)$ consists of points $M$ such that there exists $m\in M$ satisfying
$\phi$ but not $\psi$. In this case we will say that $M$ \emph{opens} the corresponding interval
$[\phi\wg \psi, \phi]$.

It follows from Ziegler (see \cite[Thm. 5.1.22]{Preb}) that $(\phi/\psi)$ is a quasi-compact open set, in particular
$\Zg_R$ is a quasi-compact space, which is often non-Hausdorff. More properties of Ziegler spectrum can be found in
\cite[Ch. 5]{Preb}

Since the class of $\CM$-modules is definable, one could talk about the theory $T_{CM}$ of $\CM$-modules, therefore
about the closed subset of $\Zg_R$ consisting of indecomposable pure injective $\CM$-modules, with the induced
topology. We will call this topological space the \emph{$\CM$-part of the Ziegler spectrum}, $\ZCM_R$; and
investigating this space is the main objective of this paper. For instance each basic open set $(\phi/\psi)_T$
is compact so as the whole space $\ZCM$.

As we have already noticed every indecomposable finitely generated $\CM$-module over $R$ is a point in $\ZCM$.
Furthermore, since each $\CM$-module over $R$ is a union of its finitely generated submodule we obtain the following.

\begin{remark}\label{dense}
The finitely generated points are dense in $\ZCM$.
\end{remark}

We add more points to this list.

\begin{remark}\label{more}
$\wt R$, $Q$ and the Laurent power series field $G= F((y))$ are points in $\ZCM$.
\end{remark}

Here are the shapes of these modules.

\vspace{3mm}

\begin{center}
$
\wt R\hspace{3mm}
\xymatrix@C=10pt@R=12pt{%
*+={\bullet}\ar[dr]_y\ar@{}+<-6pt,6pt>*{.}\ar@{}+<-10pt,10pt>*{.}\ar@{}+<-14pt,14pt>*{.}&&&&\\
&*+={\bullet}\ar[dr]_y&&*+={\bullet}\ar[dl]_x\ar[dr]^y\ar@{}+<0pt,10pt>*{_1}&\\
&&*+={\bullet}\ar[dr]_y&&*+={\bullet}\ar[dl]_x\ar@{}+<6pt,-6pt>*{.}\ar@{}+<10pt,-10pt>*{.}\ar@{}+<14pt,-14pt>*{.}\\
&&&*+={\bullet}\ar@{}+<6pt,-6pt>*{.}\ar@{}+<10pt,-10pt>*{.}\ar@{}+<14pt,-14pt>*{.}&
}
$
\hspace{3mm}
$
Q\hspace{7mm}
\xymatrix@C=10pt@R=12pt{%
&*+={\bullet}\ar[dl]_x\ar[dr]^y\ar@{}+<-6pt,6pt>*{.}\ar@{}+<-10pt,10pt>*{.}\ar@{}+<-14pt,14pt>*{.}&&&&\\
*+={\bullet}\ar[dr]_y\ar@{}+<-6pt,6pt>*{.}\ar@{}+<-10pt,10pt>*{.}\ar@{}+<-14pt,14pt>*{.}&&
*+={\bullet}\ar[dl]_x\ar[dr]^y&&\\
&*+={\bullet}\ar[dr]_y&&*+={\bullet}\ar[dl]_x\ar@{}+<6pt,-6pt>*{.}\ar@{}+<10pt,-10pt>*{.}\ar@{}+<14pt,-14pt>*{.}\\
&&*+={\bullet}\ar@{}+<6pt,-6pt>*{.}\ar@{}+<10pt,-10pt>*{.}\ar@{}+<14pt,-14pt>*{.}&&
}
$
\hspace{-3mm}
$
G\hspace{3mm}
\xymatrix@C=10pt@R=12pt{%
*+={\bullet}\ar[dr]^y\ar@{}+<-6pt,6pt>*{.}\ar@{}+<-10pt,10pt>*{.}\ar@{}+<-14pt,14pt>*{.}&&&\\
&*+={\bullet}\ar[dr]^y&&\\
&&*+={\bullet}\ar@{}+<6pt,-6pt>*{.}\ar@{}+<10pt,-10pt>*{.}\ar@{}+<14pt,-14pt>*{.}&\\
}
$
\end{center}

\vspace{3mm}

\begin{proof}
Clearly all these modules are $\CM$.

Because $Q$ is injective and uniform it is indecomposable. Also $G= F((y))$ is annihilated by $x$ and is injective
and indecomposable as an $R/xR= F[[y]]$-module, hence pure injective over $R$.

Being uniform, the module $\wt R$ is indecomposable. We check that it is linearly compact, hence pure injective.
Consider the following filtration of $\wt R$ considered as a submodule of $Q$:
$0\subset \wt R x\subset Qx\subset \wt R$. Here the module $\wt Rx\cong F[[y]]$ is linearly compact so as
$\wt R/Qx\cong F[[y]]$. Further the same is true for $Qx/\wt Rx$ which is the Pr\"ufer $F[[y]]$-module.
Since linearly compactness respects extensions, we obtain the desired.
\end{proof}

Note that the filtration on $\wt R$ is given by pp-formulae, for instance $Qx= (vx=0)(\wt R)$.

Suppose that $M$ is a pure injective module and $m\in M$. The set of all pp-formulae $\phi(v)$ such that
$m\in \phi(M)$ is said to be a \emph{pp-type} of $m$ in $M$, written $pp_M(m)$. This set of formulae can
be also described as a filter in the lattice of pp-formulae, i.e. a subset of $L$ which is upward closed
and closed with respect to finite conjunctions; hence as a filter of finitely generated subfunctors of the
functor $\Hom(R,-)$. If $M$ is pure injective indecomposable and $m$ is nonzero, then $p$ is said to be
\emph{indecomposable}.

We will assign to a pp-type $p$ its \emph{positive part} $p^+$ consisting of pp-formulas in $p$, and its
\emph{negative part} $p^-$ which consists of pp-formulae not in $p$. If $\psi< \phi$  are pp-formulae,
then $p$ defines a \emph{cut} on the interval $[\psi, \phi]$ whose upper part is $p^+\cap [\psi, \phi]$,
and lower part is the intersection of $p^-$ with this interval. If $\phi\in p^+$ and $\psi\in p^-$, i.e.
the corresponding cut is nontrivial, then we say that \emph{$p$ opens} this interval.

It follows from Ziegler that an indecomposable pure injective module $M$ is uniquely determined by the pp-type of
any its nonzero element. Furthermore, see \cite[Thm. 5.1.24]{Preb}, to recover $M$ it suffices to know any arbitrary
small piece of local information on $p$, i.e. a nontrivial cut defined by $p$ on any interval of the lattice of
pp-formulae. Also, see \cite[Thm. 4.4]{Zie}, there is a useful criterion for checking when $p$ is indecomposable.
Of course all these relativizes to any theory of $R$-modules, in particular to the theory of $\CM$-modules.

We claim that the modules mentioned in Remark \ref{more} are the only remaining points in $\ZCM$.

\begin{theorem}\label{class-point}
The modules $\wt R$, $Q$ and $G$ are the only non-finitely generated points in the Cohen-Macaulay part of the Ziegler
spectrum of $R$.
\end{theorem}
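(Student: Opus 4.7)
The plan is to case-split on the action of $x$ on $M$.

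If $Mx = 0$, then by Remark \ref{cm-def} $M$ is torsion-free as an $F[[y]]$-module, and it remains indecomposable pure-injective over the quotient ring $F[[y]] = R/xR$ since pp-formulas over the quotient are a subset of those over $R$. By the classical classification over the complete DVR $F[[y]]$, the only indecomposable torsion-free pure-injective modules are $F[[y]]$ itself (which is finitely generated over $R$ as $I_{\infty}$) and its quotient field $F((y)) = G$; the Pr\"ufer module is excluded since it has $y$-torsion. Hence $M \cong G$.

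Now suppose $Mx \neq 0$ and pick $m \in M$ with $mx \neq 0$. The element $mx$ is a nonzero $x$-annihilated element satisfying the formula $x \mid v$, and the cut on the interval $[v=0, vx=0]$ opened by its pp-type is a filter $F$ on the pattern of Figure \ref{fig2} (using Lemma \ref{dist} and Proposition \ref{int-desc}) containing the upward closure of $(R, x)$, i.e.\ the left column. Setting $n_k := \min\{n : (xy^k, I_n) \in F\}$ and $d_k := n_k - k$, I would next classify all such filters. Upward closure forces $(xy^{k-1}, I_{n_k - 1}) \in F$ whenever $n_k \geq 1$, giving $d_{k-1} \leq d_k$; closure under the meet with $(R, x)$ forces $d_{k_1} \geq d_{k_2}$ for $k_1 \leq k_2$. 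Together these constraints pin $d_k$ to a constant $-K$ on the range $k > K := \sup\{k : n_k = 0\}$, so $F$ either has the form $\{(xy^k, I_n) : n \geq k - K\}$ for some $K \in \{0, 1, 2, \ldots\} \cup \{\infty\}$, or else $F$ is principal (in which case Ziegler's uniqueness theorem would identify $M$ with a finitely generated $I_n$, contradicting the hypothesis).

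A direct computation of morphisms $(I_n, xy^k) \to (\wt R, xy^K)$ and $(I_n, xy^k) \to (Q, x)$---both reducing to asking when a specific power of $y$ lies in $\wt R$ or $Q$---shows that the non-principal filter $\{n \geq k - K\}$ is opened by $xy^K \in \wt R$ for finite $K \geq 0$, while the whole pattern (the $K = \infty$ case) is opened by $x \in Q$. Since Ziegler's theorem assigns a unique indecomposable pure-injective to each non-trivial cut, $M \cong \wt R$ or $M \cong Q$ accordingly. Together with Case I this completes the classification $M \in \{\wt R, Q, G\}$.

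The main obstacle is the combinatorial classification in Paragraph 2---the interplay of upward-closure and meet-closure forcing the "slope 1" shape of $F$, and the subsequent identification of the integer $K$ with the element $xy^K \in \wt R$ opening that filter. A secondary technical point is confirming that any non-finitely generated $M$ with $Mx \neq 0$ admits a choice of $m$ producing a genuinely non-principal cut on this interval, which follows from the density of finitely generated points (Remark \ref{dense}) combined with Ziegler's criterion that isolated points correspond to finitely presented modules.
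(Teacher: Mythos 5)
Your proposal follows the same two-step strategy as the paper: first split on whether $Mx=0$, handling the annihilated case by the known classification of torsion-free pure-injective $F[[y]]$-modules, and then, when $Mx\neq 0$, reduce to classifying the filter that the pp-type of a nonzero element of $Mx$ cuts out of the pattern on Figure \ref{fig2}, using Lemma \ref{dist} and Ziegler's indecomposability criterion. This is exactly what the paper does; your contribution is to spell out the combinatorics (the $n_k$, $d_k$ bookkeeping and the ``constant slope'' conclusion) that the paper compresses into ``comparing pp-types we obtain the possibilities shown on Figure \ref{fig3}''. Two small inaccuracies worth noting, neither of which breaks the argument: the upward closure of $(R,x)$ on Figure \ref{fig2} is the diagonal $\{x\in I_n : 0\leq n\leq\infty\}$ rather than the left column $\{xy^k\in R\}$ (the only fact you actually use is $(R,x)\in F$, which gives $n_0=0$); and the weak-decrease constraint $d_{k_1}\geq d_{k_2}$ for $k_1\leq k_2$ does not follow from meeting with $(R,x)$ alone (that only gives $d_k\leq 0$) but requires closure of $F$ under general meets $(I_{n_{k_1}},xy^{k_1})\wg(I_{n_{k_2}},xy^{k_2})$.
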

\begin{proof}
Let $N$ be an indecomposable pure injective $\CM$-module over $R$. If $N$ is annihilated by $x$, it is an
indecomposable pure injective $R/xR\cong F[[y]]$-module without $y$-torsion. The description of the Ziegler
spectrum of this valuation domain is well known - see \cite[Sect. 5.2.1]{Preb}. It follows that either
$N\cong F[[y]]$ (the adic point), hence $N\cong I_{\fty}$; or $N\cong F((y))$ (the generic point), therefore
$N\cong G$.

Thus we may assume that $Nx\neq 0$. Choose $0\neq n\in Nx$ and let $p$ be the pp-type of $n$ in $N$. In
particular $p$ is uniquely determined by its $\CM$-part, and the formula $vx= 0$ (even $x\mid v$!) belongs
to $p$. It follows that $p$ defines a nontrivial cut on the interval $[v=0, vx= 0]$ on Figure \ref{fig2}.
Furthermore the isomorphism type of $N$ is uniquely determined by this cut.

By Ziegler's criterion for indecomposability and distributivity (see Lemma \ref{dist}) we conclude that for any
$\CM$ formulas $\phi, \psi\in p^-$ marked on the diagram, their sum $\phi+ \psi$ is also not in $p$. Thus the
indecomposable pp-type $p$ is uniquely determined by a filter of formulas on the diagram, i.e. by an upward closed
and closed with respect to conjunctions set of marked formulae (recall that the sums are free, hence hidden on
the diagram).

Then either $p$ is finitely generated (i.e. the corresponding filter is principal), hence is realized in an
indecomposable finitely generated $\CM$-module, or is generated by a line going in the southeastern direction
on the diagram, or includes all nonzero $\CM$ formulae from the interval. Comparing pp-types we obtain the
possibilities shown on Figure \ref{fig3}.

\vspace{3mm}

\begin{figure}
$$
\vcenter{%
\xymatrix@C=20pt@R=16pt{%
&&&&*+={\bullet}\ar@{-}[dr]\ar@{}+<0pt,10pt>*{_{x\in I_{\fty}}}&&\\
&&&&&*+={\bullet}\ar@{-}[dr]\ar@{}+<12pt,10pt>*{_{xy\in I_{\fty}}}&&\\
&&*+={\bullet}\ar@{-}[dr]\ar@{-}[dl]
\ar@{}+<-12pt,8pt>*{_{x\in I_2}}\ar@{}+<6pt,6pt>*{.}\ar@{}+<10pt,10pt>*{.}\ar@{}+<14pt,14pt>*{.}&&&&
*+={\bullet}\ar@{}+<12pt,10pt>*{_{xy^2\in I_{\fty}}}
\ar@{}+<6pt,-4pt>*{.}\ar@{}+<10pt,-8pt>*{.}\ar@{}+<14pt,-12pt>*{.}\\
&*+={\bullet}\ar@{-}[dr]\ar@{-}[dl]\ar@{}+<-12pt,8pt>*{_{x\in I_1}}&&
*+={\bullet}\ar@{-}[dr]\ar@{-}[dl]\ar@{}+<-6pt,8pt>*{_{xy\in I_3}}
\ar@{}+<6pt,6pt>*{.}\ar@{}+<10pt,10pt>*{.}\ar@{}+<14pt,14pt>*{.}&&&&*+={\bullet}\ar@{}+<16pt,0pt>*{_{1\in G}}\\
*+={\bullet}\ar@{-}[dr]\ar@{}+<-12pt,8pt>*{_{x\in R}}&&
*+={\bullet}\ar@{-}[dr]\ar@{-}[dl]\ar@{}+<-6pt,8pt>*{_{xy\in I_2}}&&
*+={\bullet}\ar@{-}[dl]\ar@{}+<-6pt,8pt>*{_{xy^2\in I_4}}\ar@{}+<6pt,6pt>*{.}\ar@{}+<10pt,10pt>*{.}\ar@{}+<14pt,14pt>*{.}
\ar@{}+<6pt,-6pt>*{.}\ar@{}+<10pt,-10pt>*{.}\ar@{}+<14pt,-14pt>*{.}&&\\
&*+={\bullet}\ar@{-}[dr]\ar@{-}[dl]\ar@{}+<-6pt,8pt>*{_{xy\in I_1}}&&
*+={\bullet}\ar@{-}[dl]\ar@{}+<-6pt,8pt>*{_{xy^2\in I_3}}
\ar@{}+<6pt,-6pt>*{.}\ar@{}+<10pt,-10pt>*{.}\ar@{}+<14pt,-14pt>*{.}&&
*+={\bullet}\ar@{}+<22pt,0pt>*{_{xy^{-2}\in \wt R}}&\\
*+={\bullet}\ar@{-}[dr]\ar@{}+<-6pt,8pt>*{_{xy\in R}}&&
*+={\bullet}\ar@{-}[dl]\ar@{}+<-6pt,8pt>*{_{xy^2\in I_2}}\ar@{}+<6pt,-6pt>*{.}
\ar@{}+<10pt,-10pt>*{.}\ar@{}+<14pt,-14pt>*{.}&&*+={\bullet}\ar@{}+<23pt,0pt>*{_{xy^{-1}\in \wt R}}&&\\
&*+={\bullet}\ar@{-}[dl]\ar@{}+<-6pt,8pt>*{_{xy^2\in I_1}}
\ar@{}+<6pt,-6pt>*{.}\ar@{}+<10pt,-10pt>*{.}\ar@{}+<14pt,-14pt>*{.}&&*+={\bullet}\ar@{}+<16pt,0pt>*{_{x\in \wt R}}&&&\\
*+={\bullet}\ar@{}+<-8pt,8pt>*{_{xy^2\in R}}
\ar@{}+<6pt,-6pt>*{.}\ar@{}+<10pt,-10pt>*{.}\ar@{}+<14pt,-14pt>*{.}&&*+={\bullet}\ar@{}+<20pt,0pt>*{_{xy\in \wt R}}&&&&\\
&*+={\bullet}\ar@{}+<22pt,0pt>*{_{xy^2\in \wt R}}\\
*+={\bullet}\ar@{}+<16pt,0pt>*{_{x\in Q}}
}}
$$
\caption{}\label{fig3}
\end{figure}

\vspace{3mm}

For instance the (indecomposable) pp-type $p$ defined by the ray $(R,x)\xr{y} (I_1, xy)\xr{y} (I_2, xy^2)\xr{y}\dots$
coincides with the pp-type of $x\in \wt R$, therefore $\wt R$ is the pure injective envelope of $p$. Similarly we
will obtain $\wt R $ as the direct limit of the parallel ray of irreducible morphisms $I_1\xr{y} I_2\xr{y}\dots$
starting with $x\in I_1$, where $xy^{-1}\in \wt R$ realizes the corresponding pp-type.

Further $G$ is the direct limit of the ray of irreducible morphisms $I_{\fty}\xr{y} I_{\fty}\xr{y}\dots$, where
$1\in G$ realizes $p$. Also $Q$ is the direct limit of the directed system $R\xr{y} R\xr{y}\dots$ heading downwards,
where the pp-type of $x\in Q$ equals $p$ and is critical over zero.
\end{proof}

To complete a description of $\ZCM$ it remains to describe the topology, i.e. to give a basis of open sets to each
point of this space. We will describe this open basis as a collection of basic open sets $(\phi/\psi)$ containing
this point. Evaluating we will also give a subset of the Ziegler spectrum corresponding to each open set
in this basis. In fact all this can be read off the diagram.

Namely it follows from Ziegler \cite[Thm. 4.9]{Zie} that, if $\phi\in p^+, \psi\in p^-$ for pp-formulae
$\psi< \phi$ and an indecomposable pp-type $p$, then the basis of open sets for the pure injective envelope of
$p$ can be chosen among pairs $(\phi'/\psi')$, where $\psi\leq \psi'< \phi'\leq \phi$ and $\psi'\in p^-$,
$\phi'\in p^+$.

\begin{remark}\label{in}
Every finitely generated point $I_n$, $n< \fty$ is isolated by a minimal pair in $\ZCM$.
\end{remark}
\begin{proof}
By inspection of Figure \ref{fig2} using $\AR$-sequences.

For instance $I_1$ is isolated by the minimal pair $x\in I_1$ over $(x\in R)+ (xy\in I_2)$ coming from the
corresponding left almost split map. Also $R$ is isolated by the minimal pair $x\in R$ over $xy\in I_1$ given
by an irreducible morphism (multiplication by $y$).
\end{proof}

For $I_{\fty}$ one should work harder.

\begin{lemma}\label{fty}
Let $O_m$ denote the (cofinite) set of finite points $I_n$, $n\geq m$. The basis of open sets for $I_{\fty}$ is given
by sets $O_m\cup I_{\fty}$, $m= 1, 2, \dots$. For instance $I_{\fty}$ is not isolated.
\end{lemma}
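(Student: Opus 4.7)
The plan is to proceed in two halves: first exhibit each $O_m\cup\{I_\fty\}$ as a basic open of $\ZCM$ via an explicit pair $(\phi/\psi)$, and second show that every basic open of $\ZCM$ containing $I_\fty$ contains some such set; non-isolation of $I_\fty$ then follows immediately.

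For openness, I would take $\phi := (vx=0)$, the annihilator formula freely realized by $x \in I_\fty$ in $L_{CM}$, together with $\psi := (x\in I_{m-1}) + (y\mid v)$ for $m\geq 1$ (or simply $\psi := (y\mid v)$ for $m=0$). Checking $(\phi/\psi) = O_m\cup\{I_\fty\}$ is a point-by-point evaluation on the list from Theorem \ref{class-point}. One computes $(x\in I_{m-1})(I_n) = xR$ when $n\leq m-1$ and $= xy^{n-m+1}R$ when $n\geq m$, while $(y\mid v)(I_n) = xyR + y^{n+1}R$; summed, $\psi(I_n)$ fills $(vx=0)(I_n) = xR$ for $n < m$ but misses $x$ for $n\geq m$. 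At $I_\fty$ itself $\psi(I_\fty) = xyR$ also misses $x$. Finally, at $G$, $\widetilde R$ and $Q$ the divisibility clause $(y\mid v)$ alone suffices to exhaust $(vx=0)$, since $y$ acts invertibly on $G$ and $Q$ and $\widetilde R y\supseteq xQ = (vx=0)(\widetilde R)$.

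For the basis property, let $(\phi/\psi)\ni I_\fty$ be any basic open and fix a witness $n_0\in I_\fty$. After rescaling by a unit of $F[[y]]$, $n_0 = xy^j$ for some $j\geq 0$; for readability I describe the case $j=0$, since the general case is handled identically with $vx=0$ replaced by $(xy^j\in I_\fty)$ throughout. Because $x\in I_\fty$ freely realizes $vx=0$ in $L_{CM}$, its pp-type is the principal filter above $vx=0$, so $\phi\geq vx=0$; hence $(vx=0/\psi)\subseteq (\phi/\psi)$ is still a neighborhood of $I_\fty$, and the meet $\psi\wedge(vx=0)$ is strictly below $vx=0$ (otherwise $\psi$ would lie in the pp-type of $x$). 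By Proposition \ref{int-desc} this meet equals a finite sum $\sum_i\eta_i$ of marked formulae of Figure \ref{fig2} lying strictly below $vx=0$. The key calculation is that a pointed morphism $(I_k,xy^s)\to(I_n,x)$ --- equivalently $(x\in I_n)\leq (xy^s\in I_k)$ --- exists only when $s=0$ and $k\geq n$, since $\alpha y^s = x$ in $R$ forces $\alpha = xy^{-s}$, which belongs to $R$ iff $s=0$. Combined with the free-sum statement of Proposition \ref{int-desc}(2), this gives $(x\in I_n)\not\leq \sum_i\eta_i$ as soon as $n$ exceeds $K := \max\{k : (x\in I_k)\text{ appears among the }\eta_i\}$; for such $n$ the element $x\in I_n$ witnesses $I_n\in (vx=0/\psi)\subseteq (\phi/\psi)$. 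Setting $m := K+1$ (or $m := 0$ when no $\eta_i$ of the form $(x\in I_k)$ occurs) then yields $O_m\cup\{I_\fty\}\subseteq (\phi/\psi)$.

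Non-isolation of $I_\fty$ is immediate, since every basic neighborhood of $I_\fty$ thus contains infinitely many distinct points $I_n$. The main obstacle in the argument is the case analysis on the marked formulae of Figure \ref{fig2}: recognising that only the ``left-column'' formulae $(x\in I_k)$ can dominate $(x\in I_n)$ from below --- a direct consequence of the non-invertibility of $y$ in $R$ --- is what makes the bound $K+1$ finite, and what ultimately separates $I_\fty$ from the other non-finitely-generated points $\widetilde R$, $Q$, $G$.
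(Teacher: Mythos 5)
Your proof is correct and follows essentially the same strategy as the paper: restrict to the interval $[v=0,\,vx=0]$ via Ziegler's basis theorem, exhibit the pairs $(vx=0)/\bigl((x\in I_k)+(xy\in I_\fty)\bigr)$ (your $(y\mid v)\wedge(vx=0)$ is the same $\CM$-formula as $(xy\in I_\fty)$, up to the index shift), and check by evaluation which points they isolate. The paper leaves the ``these pairs form a basis'' step implicit; your analysis of the marked formulae $(xy^s\in I_k)$ spells out exactly why it works, but it is the argument the paper's terse ``by evaluation'' is pointing at.
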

\begin{proof}
Since $x\in I_{\fty}$ satisfies the formula $vx= 0$, the basis of open sets for $I_{\fty}$ can be chosen within
the interval $[x=0, vx=0]$, i.e. it is visible on Figure \ref{fig2}. For instance $x\in I_{\fty}$ over
$(xy\in I_{\fty})+ (x\in I_m)$ is such a basis.

By evaluating we see that the open set defined by this pair is $O_{m+1}\cup I_{\fty}$.
\end{proof}

Recall (see \cite[p. 254]{Preb}) that an indecomposable pp-type $p$ is said to be \emph{neg-isolated} if
there a pp-formula $\psi\in p^-$ such that $p$ is maximal among pp-types containing $\psi$ in its negative part.
If $M$ is an indecomposable pure injective module realizing a neg-isolated pp-type $p$, then $M$ is called
\emph{neg-isolated}; this notion does not depend on the choice of $p$. For instance from the above description
it follows that $I_{\fty}$ is not neg-isolated.

Now we are in a position to describe isolated points.

\begin{cor}\label{isol}
The isolated points in $\ZCM$ of $R$ are exactly modules $I_n$, $0\leq n< \fty$.
\end{cor}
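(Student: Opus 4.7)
The plan is to combine the complete classification of points in $\ZCM$ given by Theorem \ref{class-point} with the isolation/non-isolation statements already at hand, plus a density argument to rule out the remaining three non-finitely generated points.

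First, I would recall that by Theorem \ref{class-point} the points of $\ZCM$ split into four families: the finitely generated modules $I_n$ for $0\leq n<\fty$, the module $I_{\fty}$, and the three non-finitely generated points $\wt R$, $Q$, and $G$. So it suffices to determine which of these are isolated.

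Next, Remark \ref{in} already gives that each $I_n$ with $n<\fty$ is isolated (by a minimal pair in $L_{CM}$), and Lemma \ref{fty} gives that $I_{\fty}$ is not isolated, since its basis of open sets $O_m\cup\{I_{\fty}\}$ each contain cofinitely many finitely generated points $I_n$. So what remains is to rule out isolation of $\wt R$, $Q$ and $G$.

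For this I would invoke Remark \ref{dense}: the finitely generated points are dense in $\ZCM$. If any of $\wt R$, $Q$, $G$ were isolated, then the singleton consisting of that point would be a non-empty open set of $\ZCM$, and by density it would have to contain a finitely generated point, which is absurd since none of $\wt R$, $Q$, $G$ is finitely generated (e.g.\ by Remark \ref{val} for $\wt R$, and by construction for $Q$ and $G$). Hence none of these three points is isolated.

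There is no real obstacle here: every ingredient has already been assembled. The only mild point is being clear that Theorem \ref{class-point} exhausts the list of points, so that the three-way case analysis (finitely generated, $I_{\fty}$, one of $\wt R, Q, G$) is complete; combining this with Remark \ref{in}, Lemma \ref{fty}, and the density argument from Remark \ref{dense} finishes the proof.
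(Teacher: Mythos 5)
Your proof is correct. It follows the same basic structure as the paper (enumerate the points via Theorem \ref{class-point}; use Remark \ref{in} to see the $I_n$ with $n<\fty$ are isolated; use Lemma \ref{fty} for the non-isolation of $I_{\fty}$), but you treat the three non-finitely generated points by a slightly different argument. The paper observes that $G$ is a direct limit of copies of $I_{\fty}$ (and similarly $\wt R$ is a limit along a ray of $I_n$'s, $Q$ a limit of $\wt R$'s), so each of $G, \wt R, Q$ lies in the closure of the finitely generated points it approximates, and hence cannot be isolated. You instead invoke the global density of the finitely generated points (Remark \ref{dense}) all at once: a nonempty open singleton would have to contain a finitely generated point, which none of $\wt R, Q, G$ is. This is a clean, uniform reformulation --- in fact Remark \ref{dense} is itself established via the same kind of direct-limit observation, so the two arguments have the same content, with yours packaged at a higher level of abstraction. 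Both are valid; yours avoids having to exhibit the specific rays realizing each non-finitely generated point as a limit, at the cost of not showing \emph{which} finitely generated points accumulate at each of $\wt R$, $Q$, $G$ (information the paper does want later, for the Cantor--Bendixson analysis).
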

\begin{proof}
Since $G$ is the direct limit of copies of $I_{\fty}$ (see Figure \ref{fig3}) and the space $\ZCM$ is compact,
this module cannot be isolated. By a similar reason neither $\wt R$ nor $Q$ is isolated. As we have already
seen $I_{\fty}$ is also non-isolated. It remains to apply Remark \ref{in}.
\end{proof}

Here we meet the first peculiarity. For finite dimensional algebras the existence of $\AR$-sequences implies
(see \cite[Cor. 5.3.37]{Preb}) that isolated points in the Ziegler spectrum are exactly indecomposable finite
dimensional modules. The original feeling was that these finite dimensional points should correspond to finitely
generated $\CM$-points in out setting. However this is not the case, because $I_{\fty}$ is finitely generated but
not isolated.

Now we deal with with $\wt R$.

\begin{lemma}\label{tR}
Let $O_m$ be as in Lemma \ref{fty}. The basis of open sets for $\wt R$ is given by sets $O_m\cup \wt R$,
$m= 1, 2, \dots$.
\end{lemma}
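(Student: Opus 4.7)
My plan is to mirror the argument for Lemma \ref{fty}, but with a pair of formulae that pinches the cut defining $\wt R$. By the proof of Theorem \ref{class-point}, the indecomposable pp-type $p$ of $x\in\wt R$ is generated, as a filter on the distributive interval $[v=0,vx=0]$ of Lemma \ref{dist}, by the ray $(R,x)\xr{y}(I_1,xy)\xr{y}(I_2,xy^2)\xr{y}\cdots$ whose direct limit is $\wt R$, and $p^-$ consists of the $\CM$-formulae lying strictly below this ray on Figure \ref{fig3}. By Ziegler's theorem on basis extraction (the same citation used for Lemma \ref{fty}), it then suffices to exhibit a cofinal family of pairs $\psi_m<\phi_m$ of $\CM$-formulae inside this interval with $\phi_m\in p^+$, $\psi_m\in p^-$, and to evaluate each such pair at the points of $\ZCM$.

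I take $\phi_m=(I_m,xy^m)$ (the $m$-th ray element, hence in $p^+$) and $\psi_m=(I_{m-1},xy^m)$ for $m\geq 1$. The inclusion $I_m\hookrightarrow I_{m-1}$ realizes $\psi_m<\phi_m$. To see $\psi_m\in p^-$, note that any morphism $f\colon I_{m-1}\to\wt R$ with $f(xy^m)=x$ would force $f(x)=xy^{-m}$, and then the defining relation $-y^{m-1}\cdot x+x\cdot y^{m-1}=0$ of $I_{m-1}$ would require $f(y^{m-1})\in y^{-1}+x\wt R$; but $y^{-1}\notin\wt R$ by Remark \ref{val}.

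It remains to compute the evaluations $\phi_m(N)$ and $\psi_m(N)$ for each indecomposable pure-injective $\CM$-module $N$ and check when their difference is non-empty. Parameterizing a morphism $(I_k,xy^k)\to(N,a)$ by $f(x)\in\Ann_N(x)$ subject to the condition $f(y^k)\in N$, one obtains on $I_n$ with $1\leq n<\fty$ that $\phi_m(I_n)=xy^{\max(m,n)}F[[y]]$, while $\psi_m(I_n)$ equals $xy^m F[[y]]$ for $n<m$ and $xy^{n+1}F[[y]]$ for $n\geq m$; so the difference is non-empty exactly when $n\geq m$. On $R$, both sides collapse to $xy^m F[[y]]$; on $I_\fty$ and $G$ the compatibility constraint forces $f(x)=0$, killing $\phi_m$; on $Q$ the invertibility of $y$ yields $\phi_m(Q)=\psi_m(Q)=xQ$; and on $\wt R$ itself, $\phi_m(\wt R)=xR$ and $\psi_m(\wt R)=xyR$, witnessed by the element $x$. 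This identifies $(\phi_m/\psi_m)$ with $O_m\cup\{\wt R\}$, and cofinality in the neighborhood filter of $\wt R$ follows because these pairs pinch arbitrarily close to the cut of $p$ on the distributive interval. The main obstacle will be the bookkeeping at the infinitely-generated points $Q$, $G$, $I_\fty$ and $\wt R$, where the specific arithmetic of $\wt R=R+xQ$ from Remark \ref{val} is needed to rule $Q$, $G$ and $I_\fty$ out.
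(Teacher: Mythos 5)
Your argument follows the paper's line exactly: both invoke Ziegler's basis-extraction theorem, read off straddling pp-pairs for $\wt R$ from the cut on Figure \ref{fig3}, and evaluate those pairs on each point of $\ZCM$ to obtain the open sets $O_m\cup\{\wt R\}$. The only differences are that the paper fixes the lower formula once and for all as the neg-isolating pair $xy\in R$ while you vary $\psi_m=(I_{m-1},xy^m)$, and that in your check of $\psi_m\in p^-$ the relation forces $f(y^{m-1})\in y^{-1}+xQ$ (the coset by $\Ann_Q(x)$), not $y^{-1}+x\wt R$ --- though the conclusion via $y^{-1}\notin\wt R$ still goes through.
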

\begin{proof}
Consider $\wt R$ as a pointed module $x\in \wt R$, i.e. as the pure injective envelope of the pp-type $p$ of
$x\in \wt R$. From Figure \ref{fig3} we conclude that a basis of open sets for this point is given by the line
going from $x\in R$ in the southeastern direction, hence by pairs $xy^n\in I_n$ over $xy\in R$, $n= 1, 2, \dots$.
In particular this point is neg-isolated.

Evaluating this pair on $\ZCM$ we obtain the open set $O_n\cup \wt R$, as desired.
\end{proof}

Recall that the Cantor--Bendixson analysis on a compact topological space runs by a consecutive removal its
isolated points - see \cite[Sect. 5.3]{Preb} how it applies to the Ziegler spectrum. This way one obtains an
ordinal indexed descending chain of closed subspaces $T^{(\lam)}$. If $T^{(\lam)}\neq \emptyset$ and
$T^{(\lam+1)}= \emptyset$ for some ordinal $\lam$ then we say that the \emph{Cantor--Bendixson rank},
$\CB$-rank, of $T$ equals $\lam$. In this case each point $t\in T$ is assigned its $\CB$-rank, being the smallest
$\mu$ such that $t\in T^{(\mu)}\sm T^{(\mu+1)}$.

Thus on the first step of the $\CB$-analysis of $\ZCM$ we remove all isolated points $I_n$, $n< \fty$. The next
level is described in the following lemma.

\begin{lemma}\label{cb-1}
$I_{\fty}$ and $\wt R$ are the only points in $\ZCM$ of Cantor--Bendixson rank $1$.
\end{lemma}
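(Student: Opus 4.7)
The plan is to carry out one step of the Cantor--Bendixson analysis on $\ZCM$ explicitly. By Corollary \ref{isol}, removing the isolated points leaves the four-point subspace $X=\{I_{\fty},\wt R,Q,G\}$, and the claim is that $I_{\fty}$ and $\wt R$ are isolated in $X$ whereas $Q$ and $G$ are not.

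For the isolation of $I_{\fty}$ and $\wt R$ in $X$ I would simply read the answer off the neighbourhood bases already computed. By Lemma \ref{fty} a basis around $I_{\fty}$ in $\ZCM$ consists of the sets $O_m\cup\{I_{\fty}\}$, each of which contains, besides $I_{\fty}$, only the finitely generated points $I_n$ with $n\geq m$. None of $\wt R$, $Q$, $G$ lies in any such set, so intersecting with $X$ yields $\{I_{\fty}\}$; hence $I_{\fty}$ is isolated in $X$. The identical argument based on Lemma \ref{tR} gives the isolation of $\wt R$ in $X$.

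For the non-isolation of $Q$ and $G$ I would invoke the standard fact that if $N=\varinjlim M_i$ is a direct limit, then $N$ lies in the Ziegler closure of $\{M_i\}$: any basic open set $(\phi/\psi)$ containing $N$ is opened by some $m\in N$, and filteredness of the system lets one lift both $m\models\phi$ and $m\not\models\psi$ to some $M_i$. As already recorded in the proof of Theorem \ref{class-point}, $G=\varinjlim(I_{\fty}\xr{y}I_{\fty}\xr{y}\cdots)$, so $G\in\overline{\{I_{\fty}\}}$; since $I_{\fty}\in X$, every neighbourhood of $G$ in $X$ contains $I_{\fty}$, and $G$ is not isolated in $X$. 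A parallel argument handles $Q$ once one knows $Q=\varinjlim(\wt R\xr{y}\wt R\xr{y}\cdots)$; combined with $\wt R\in X$ this shows that $Q$ is not isolated either.

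The one non-cosmetic ingredient is checking that $Q$ really is the colimit of the $y$-tower over $\wt R$ rather than over $R$ --- the presentation $Q=\varinjlim_y R$ used in the proof of Theorem \ref{class-point} is of no help here, since the copies of $R$ have been discarded in the very first step of the analysis. This is routine: starting from $\wt R=R+xQ$ and $R[y^{-1}]=Q$ one gets $\wt R[y^{-1}]=Q+xQ=Q$, which is exactly the required direct-limit presentation.
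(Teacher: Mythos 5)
Your argument is correct and follows the same route as the paper: read the $\CB$-rank-$1$ status of $I_{\fty}$ and $\wt R$ off the neighbourhood bases of Lemmas~\ref{fty} and~\ref{tR}, and exclude $G$ and $Q$ because they lie in the Ziegler closures of $\{I_{\fty}\}$ and $\{\wt R\}$ respectively, being direct limits along the $y$-rays. The only addition is your explicit verification that $Q=\varinjlim_y\wt R$ via $\wt R[y^{-1}]=Q+xQ=Q$, a step the paper asserts without comment; this is a useful bit of diligence but does not change the argument.
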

\begin{proof}
It follows from Lemmas \ref{fty}, \ref{tR} that both points have $\CB$-rank 1. Furthermore $G$ is the direct limit
of copies of $I_{\fty}$, hence is in the closure of this point, in particular $G$ has $\CB$-rank at least 2.

Similarly $Q$ is the direct limit of copies of $\wt R$, therefore its $\CB$-rank exceeds $1$.
\end{proof}

The next level of the Cantor--Bendixson analysis is the last.

\begin{lemma}
The point $G$ has $\CB$-rank 2. Furthermore a basis of open sets for $G$ is obtained from $\ZCM$ by
removing $Q$ and finitely many finite points $I_n$, $n\leq m$.
\end{lemma}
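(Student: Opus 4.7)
The plan is to exhibit the neighborhood basis of $G$ explicitly. For each $m\geq 0$ set $U_m := (\phi_0/\theta_m)$, where $\phi_0 = (vx=0)$ is the $\CM$-formula freely realized by $x\in I_{\fty}$ and $\theta_m$ is the one freely realized by $x\in I_m$. I would first verify by direct evaluation on each point of $\ZCM$ that $U_m = \ZCM\setminus(\{Q\}\cup\{I_n : n\leq m\})$: the element $1\in G$ and every nonzero element of $I_{\fty}$ lie in $\phi_0\setminus\theta_m$ because neither module admits any nonzero multiple of $x$; the element $xy^{-m-1}\in\wt R$ witnesses $\wt R\in U_m$; $x\in I_n$ witnesses $I_n\in U_m$ for $n>m$; while for $n\leq m$ every $x$-annihilated element of $I_n$ already satisfies $\theta_m$ (using $y^m\in I_n$ as witness), and every $x$-annihilated element of $Q$ lies in $Qx$, so both $Q$ and such $I_n$ drop out. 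Since $U_m\cap\{G,Q\}=\{G\}$, combining this with Lemma \ref{cb-1} gives Cantor--Bendixson rank $2$ for $G$.

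For the basis claim I would invoke Ziegler's theorem with the fixed pair $v=0\in p_G^-$ and $\phi_0\in p_G^+$: a neighborhood basis at $G$ consists of basic opens $(\phi'/\psi')$ with $\phi',\psi'\in[v=0,\phi_0]$, $\psi'<\phi'$, $\phi'\in p_G^+$ and $\psi'\in p_G^-$. Using Proposition \ref{int-desc} I would write $\psi'$ as a finite sum $\sum_l (xy^{i_l}\in I_{n_l})$ of marked $\CM$-formulae; upward closure of $p_G^+$ forces each summand to lie in $p_G^-$, and since the only marked formulae satisfied by $1\in G$ are the $\phi_k := (xy^k\in I_{\fty})$, each $n_l$ must be finite. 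Symmetrically, indecomposability of $p_G$ forces $\phi'$ to contain some $\phi_k$ as a summand.

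The final step is to show that for $m$ sufficiently large $U_m\subseteq (\phi'/\psi')$, by verifying membership of each point of $U_m$ in $(\phi'/\psi')$. The point $G$ is given; $I_{\fty}$ follows from $G$ being the direct limit of copies of $I_{\fty}$ (Theorem \ref{class-point}) and preservation of pp-formulae in direct limits; the same preservation applied to $\wt R = \lim I_n$ along the ray of irreducible maps handles every $I_n$ for $n$ past a threshold. The point $\wt R$ itself requires an explicit calculation in the valuation ring $\wt R$: the element $xy^{-j}$ with $j>M:=\max_l(n_l-i_l)$ satisfies every $\phi_k$ (the witness $xy^{-j-k}$ lies in $\wt R$) and hence satisfies $\phi'$, while by uniseriality of the interval on $\wt R$ it fails $\psi'$ because each marked summand $(xy^{i_l}\in I_{n_l})$ is realized by $xy^{-j}\in\wt R$ only when $j\leq n_l-i_l$. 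Choosing $m$ larger than all thresholds arising from these three arguments then yields $U_m\subseteq (\phi'/\psi')$. The main obstacle is this last bookkeeping: tracking precisely which marked formulae are realized by the various $xy^{-j}\in\wt R$, which reduces to the inequality $j\leq n-i$ and organizes the whole basis description around the valuation on $\wt R$.
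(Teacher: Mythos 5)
Your proposal is correct and takes essentially the same approach as the paper: pick a pp-pair opening at $G$ inside the interval $[v=0,vx=0]$, use Ziegler's neighborhood-basis reduction together with Proposition~\ref{int-desc} to reduce to marked formulae, and evaluate. The family $\{U_m = (\phi_0/\theta_m)\}$ you use coincides, as a family of open sets, with the paper's stated basis $\{(xy^n\in I_\fty)/(x\in I_m)\}$ (one checks $(xy^n\in I_\fty)/(x\in I_m) = (\phi_0/\theta_{m+n})$), and your direct-limit shortcut for handling $I_\fty$ and the tail of the $I_k$'s is just a tidier way of carrying out the ``by evaluation'' step the paper leaves implicit.
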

\begin{proof}
Choosing $1\in G$, from Figure \ref{fig3} we conclude that a basis of open sets for $G$ is given by pairs
$xy^n\in I_{\fty}$ over $x\in I_m$, $m, n = 0, 1, \dots$.

The remaining part is by evaluation. For instance the finite point $I_k$ belongs to the above open set iff
$k> m+n$; and $I_{\fty}, \wt R$ belong to any of those open sets.

We have already seen that $\CB(G)\geq 2$. Since the pair $x\in I_{\fty}$ over $x\in R$ separates $G$ from $Q$ we
conclude that $\CB(G)= 2$.
\end{proof}

To complete the Cantor--Bendixson analysis it remains to deal with $Q$.

\begin{lemma}
The point $Q$ has $\CB$-rank 2. Furthermore a basis of open sets for $Q$ is obtained from $\ZCM$ by removing
$G$ and $I_{\fty}$.
\end{lemma}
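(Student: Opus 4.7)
The plan is to exhibit a single pp-pair whose corresponding basic open set is exactly $\ZCM\sm\{I_\fty,G\}$, and then argue that it is the minimal open neighbourhood of $Q$. From Figure \ref{fig3} and the argument in the proof of Theorem \ref{class-point}, $Q$ is the pure-injective envelope of the pp-type $p=\mathrm{pp}_Q(x)$; restricted to the interval $[v=0,\,vx=0]$ of Lemma \ref{dist}, this pp-type is as large as possible, so every marked $\CM$-formula on the diagram lies in $p^+$ and $p^-\cap[v=0,\,vx=0]=\{v=0\}$. By the Ziegler basis criterion recalled before Remark \ref{in}, a basis of open neighbourhoods of $Q$ may therefore be chosen among the pairs $(\phi'/v=0)$ with $\phi'\in p^+\cap[v=0,\,vx=0]$.

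I take $\phi=(x\mid v)$, which lies in $p^+$ (with witness $1\in Q$) and is freely realized by $x\in R$. Evaluating $\phi(M)=Mx$ on the points classified in Theorem \ref{class-point}: $\phi(I_n)=xy^nR\neq 0$ for $n<\fty$, $\phi(I_\fty)=x\cdot xR=0$, $\phi(\wt R)=x\wt R\neq 0$, $\phi(Q)=xQ\neq 0$, while $\phi(G)=0$ since $x$ annihilates $G$. Hence the basic open set $(x\mid v/v=0)$ equals $\ZCM\sm\{I_\fty,G\}$, producing the required open neighbourhood of $Q$. To see it is the \emph{smallest} such neighbourhood, note the direct-limit presentations $Q=\varinjlim\bigl(I_n\xr{y}I_n\xr{y}\cdots\bigr)$ for each $n<\fty$ and $Q=\varinjlim\bigl(\wt R\xr{y}\wt R\xr{y}\cdots\bigr)$, both valid because $y^n\in I_n$ is a non-zero-divisor and $\wt R\subset Q$, so that the $y$-localization of each system equals $Q$. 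The standard fact that in a definable category a direct limit opens a pp-pair only when some term of the system already does then gives $\phi'(I_n)\neq 0$ and $\phi'(\wt R)\neq 0$ for every $\phi'\in p^+$; hence $\{I_n:n<\fty\}\cup\{\wt R\}$ is contained in every open neighbourhood of $Q$, which combined with the previous paragraph identifies $\ZCM\sm\{I_\fty,G\}$ as the minimum.

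For the Cantor--Bendixson rank, $Q$ is not isolated in $\ZCM$, nor in $\ZCM\sm\{I_n:n<\fty\}$, since its neighbourhoods always contain $\wt R$. After the second derivative further removes $I_\fty$ and $\wt R$, which are of $\CB$-rank $1$ by Lemma \ref{cb-1}, only $\{Q,G\}$ remains, and the open set $(x\mid v/v=0)$ restricted to this subspace is $\{Q\}$. Hence $Q$ becomes isolated at stage $2$, giving $\CB(Q)=2$. The main delicate point is the direct-limit step; once the identifications $I_n\otimes_R Q=\wt R\otimes_R Q=Q$ are in hand, the rest reduces to the evaluation on Figure \ref{fig3}.
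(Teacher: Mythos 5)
Your proof is correct and proceeds essentially as the paper does: identify $Q$ (pointed at $x$) as critical over zero from Figure \ref{fig3}, use the Ziegler basis criterion to restrict attention to pairs $(\phi'/v=0)$ inside $[v=0,\,vx=0]$, and evaluate. The one place where you add detail beyond what the paper writes is the minimality step: the paper merely asserts, by inspection of the diagram, that no pair in the basis separates $Q$ from the finite points $I_m$, whereas you justify this via the direct-limit presentations $Q=\varinjlim I_n$ and $Q=\varinjlim\wt R$ together with the fact that pp-pairs commute with filtered colimits. Both amount to the same observation (all the pairs $xy^n\in R$ over $v=0$ cut out the same open set $\ZCM\sm\{I_\fty,G\}$), but your route is slightly more explicit and self-contained.
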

\begin{proof}
Pointing $Q$ at $x$ from Figure \ref{fig3} we see that a basis of open sets for $Q$ is given by pp-pairs
$xy^n\in R$ over $x= 0$, hence this point is critical over zero in terminology of \cite{Her92}.

The remaining part is by evaluation, in particular $xy\in R$ over $x=0$ separates $Q$ from $G$ and $I_{\fty}$.
On the other hand none element in this basis separates $Q$ from any finite point $I_m$, hence $Q$ is in
the closure of each $I_m$.
\end{proof}

As a result we conclude on the value of $\CB$-rank.

\begin{theorem}\label{cb-val}
The Cantor--Bendixson rank of the Cohen--Macaulay part of the Ziegler spectrum of $R$ equals $2$.
\end{theorem}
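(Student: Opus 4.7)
The plan is to assemble the Cantor--Bendixson rank computations that have been carried out point-by-point in the preceding lemmas. By Theorem \ref{class-point} the underlying set $\ZCM$ is the finite union $\{I_n : 0\leq n\leq \fty\}\cup \{\wt R, Q, G\}$, so the analysis will terminate in finitely many stages and it suffices to trace through them.

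First I would invoke Corollary \ref{isol} to identify the zeroth step of the derivation: the isolated points of $\ZCM$ are exactly the $I_n$ with $0\leq n< \fty$. Removing them gives $\ZCM^{(1)}= \{I_{\fty}, \wt R, G, Q\}$. Next I would apply Lemma \ref{cb-1}, which says that $I_{\fty}$ and $\wt R$ are precisely the points of $\CB$-rank $1$, i.e. the points that are isolated inside $\ZCM^{(1)}$. Removing them yields $\ZCM^{(2)}= \{G, Q\}$.

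At this stage I would quote the two lemmas immediately preceding the theorem, which compute bases of neighbourhoods for $G$ and $Q$ and assign each of them $\CB$-rank $2$. To upgrade this from a statement about individual points to one about the whole space, I would observe that inside the two-point residual $\ZCM^{(2)}= \{G, Q\}$ each of $G$ and $Q$ is isolated: the basic open set $xy\in R$ over $v=0$ separates $Q$ from $G$, and symmetrically the basis of open sets for $G$ described above excludes $Q$. Hence $\ZCM^{(3)}= \emptyset$ while $\ZCM^{(2)}\neq \emptyset$, which by definition of $\CB$-rank forces $\CB(\ZCM)= 2$.

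The main obstacle has effectively been dispatched by the preceding lemmas: there is no conceptual work left, only bookkeeping. The one subtle check is to confirm that after removing $I_{\fty}$ and $\wt R$ from $\ZCM^{(1)}$ no new closure phenomenon appears between $G$ and $Q$; but this is immediate from the explicit open bases already exhibited, and so the argument is essentially just the observation that the derivation chain stabilises at the empty set on the third step.
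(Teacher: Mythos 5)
Your argument is correct and coincides with the paper's own, which offers no separate proof for Theorem \ref{cb-val} but simply gathers Corollary \ref{isol}, Lemma \ref{cb-1}, and the two subsequent lemmas on $G$ and $Q$ to conclude that the derivation chain reaches $\emptyset$ at stage $3$. One small inaccuracy in your preamble: the underlying set $\{I_n : 0\leq n\leq\infty\}\cup\{\wt R, Q, G\}$ is countably infinite, not finite, so "the analysis terminates in finitely many stages" is not an automatic consequence of that description; it is justified only by the step-by-step computation you carry out afterwards, which is sound.
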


From the above description of the topology it follows that the only closed points of $\ZCM$ are endofinite
points $G$ and $Q$, i.e. points of maximal $\CB$-rank.

Recall (see \cite[Exerc. 7.10]{J-L}) that over a finite dimensional algebra each pure injective module is
isomorphic to a direct summand of a direct product of finite dimensional modules. Here all looks differently.

\begin{lemma}\label{none}
None of the points $\wt R, Q$ and $G$ is isomorphic to a direct summand of a direct product of finitely generated
$\CM$-modules.
\end{lemma}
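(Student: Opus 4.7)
The plan is to exhibit a pp-theoretic obstruction: an infinite family of divisibility conditions that is simultaneously satisfied by a nonzero element in each of $\wt R$, $Q$, $G$, but is jointly unsatisfiable (away from $0$) in every finitely generated $\CM$-module, hence in every direct summand of any direct product of such. The natural candidate is the family of divisibility formulae $y^k\mid v$ for $k\geq 1$, which together assert that an element is infinitely $y$-divisible; equivalently, the target is to show $\bigcap_k Ny^k=0$ for every $N$ that can arise as a direct summand of a product of finitely generated $\CM$-modules.

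The first step is to verify $y$-adic separation for finitely generated $\CM$-modules. By Fact \ref{class-fg} it suffices to handle the indecomposables $R$, each $I_n$ with $1\leq n<\fty$, and $I_{\fty}$. Writing a general element of any such ideal in the form $xa(y)+y^n b(y)$ with $a,b\in F[[y]]$, the requirement that it lie in $My^k$ forces both $a$ and $b$ into $y^k F[[y]]$, and intersecting over $k$ annihilates both. The second step bootstraps this to products and summands: the pp-formula $y^k\mid v$ commutes with direct products, so $\bigcap_k(\prod_i M_i)y^k=\prod_i\bigcap_k M_i y^k=0$, and if $N$ is a direct summand of $\prod_i M_i$ then the split embedding is pure, so $\phi(N)=N\cap\phi(\prod_i M_i)$ for every pp-formula $\phi$, forcing $\bigcap_k Ny^k=0$.

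The third and final step is to produce the required witnesses. In $\wt R$ the element $x=y^k(xy^{-k})$ is divisible by every $y^k$ since $xy^{-k}\in\wt R$ by Remark \ref{val}; in $Q$ the element $1=y^k\cdot y^{-k}$ is divisible by every $y^k$; and in $G=F((y))$, the element $1$ is divisible by every $y^k$ because $G$ is a field. Each witness is nonzero, contradicting the previous step and finishing the proof. The substantive ingredient is really the conceptual observation that $y$-adic separation is the right pp-obstruction to extract — once that is in hand, each of the individual steps is a short and routine calculation; the result is striking mainly by contrast with the finite-dimensional algebra case, where no such pp-obstruction is available.
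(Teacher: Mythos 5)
Your proof is correct, and it takes a genuinely different and more elementary route than the paper's. The paper argues via pp-types and the pattern: it notes that the pp-type $p$ of $x\in\wt R$ is generated by the formulae $xy^n\in I_n$ visible on Figure \ref{fig3}, and then observes that if $\wt R$ were a summand of $\prod_i M_i$, the generator $\phi_i$ of the pp-type of each coordinate $m_i$ would have to lie below all of $p$ in the diagram, forcing $\phi_i$ (and hence $m_i$ and $m$) to be zero; the cases $Q$ and $G$ are then declared ``similar.'' You instead isolate a single explicit infinite family of pp-formulae, the divisibility conditions $y^k\mid v$, and reduce everything to $y$-adic separation of finitely generated $\CM$-modules, which propagates to products (divisibility formulae commute with products) and to pure submodules, hence to direct summands. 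What your version buys is independence from the pattern machinery of Figure \ref{fig3}: the obstruction is stated in purely module-theoretic terms and handles all three modules uniformly by exhibiting a nonzero infinitely $y$-divisible element in each. What the paper's version buys is consistency with its overall framework, since the same pattern analysis is used throughout Sections \ref{S-free}--\ref{S-zig} to describe the topology. One small remark: in your verification of separation for the ideals $I_n$ you work with the non-unique presentation $xa(y)+y^nb(y)$; this can be made to work as you sketch, but it is even quicker to note that $I_n\subseteq R$ gives $\bigcap_k I_ny^k\subseteq\bigcap_k Ry^k=0$ directly, so that only $R$ and $I_\fty\cong F[[y]]$ need separate checking.
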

\begin{proof}
We will give a proof for $\wt R$, and the remaining cases are similar.

From Figure \ref{fig3} we conclude that the pp-type $p$ of $x\in \wt R$ is generated by formulas $xy^n\in I_n$,
$n= 1, 2, \dots$. Suppose that $\wt R$ is realized as a direct summand of a direct product of finitely generated
$\CM$-modules $M_i$, $i\in I$ such that the element $m= (m_i)_{i\in I}$ realizes $p$. If the $\CM$-formula $\phi_i$
generates the pp-type of $m_i$ in $M_i$, then $\phi_i$ will be below each pp-formula in $p$. Looking at
Figure \ref{fig3} we see that $\phi_i$ is equivalent to the zero formula, hence $m=0$, a contradiction.
\end{proof}

\section{Krull--Gabriel dimension and $m$-dimension}

In this section we will calculate the $m$-dimension of the lattice $L_{CM}$ of $\CM$-formulae over $R$. This is
the same as the Krull--Gabriel dimension of the definable category of Cohen--Macaulay $R$-modules
(see \cite[Sect. 13.2.2]{Preb}), but to avoid introducing many definitions we will not use this notion.

Suppose that $L$ is a lattice with top and bottom. Following \cite[Sect. 7.2]{Preb} by induction on ordinals we define
a sequence of congruence relations $\sim_{\lam}$, hence factor lattices $L_{\lam}= L/\sim_{\lam}$. Namely let
$\sim_0$ be the trivial relation, hence $L_0= L$. On limit steps $\lam$ we define
$\sim_{\lam}= \cup_{\mu< \lam} \sim_{\mu}$. Further, for a non-limit ordinal $\lam+ 1$, let the congruence
relation on $L_{\lam}$ collapses intervals of finite length, and let $\sim_{\lam+1}$ be the preimage of this
relation in $L$. If there is an ordinal $\lam$ such that the lattice $L_{\lam}$ is nontrivial (i.e. contains at
least two elements) and $L_{\lam+1}$ is a trivial lattice than we define the \emph{$m$-dimension} of $L$ to be
equal to $\lam$.

We will be interested in the case when $L$ is the lattice $L_{CM}$ of $\CM$-formulae over $R$. For instance on the
first step of the $m$-dimension analysis the simple interval $x\in R$ over $xy\in I_1$ on Figure \ref{fig3}
collapses in $L_1$. In general, the $m$-dimension analysis runs parallel to the Cantor-Bendixson analysis. Namely
(see \cite[Sect. 5.3.2]{Preb}) if, at each step of the Cantor-Bendixson analysis, each point is isolated by a
minimal pair, then the lattice $L_{\lam}$ coincides with the lattice of pp-formulae of the $\CB$ derivative
$T_{\CM}^{(\lam)}$ of the theory of $\CM$-modules.

From the analysis of the previous section it follows that this is the case for our singularity $R$, therefore
we obtain the following.

\begin{prop}\label{m-dim}
The $m$-dimension of the lattice of $\CM$-formulae over $R$ equals $2$.
\end{prop}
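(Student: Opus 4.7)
The plan is to invoke the standard link between the $m$-dimension of $L_{CM}$ and the Cantor--Bendixson analysis of $\ZCM$ established in \cite[Sect.~5.3.2]{Preb}: if at every stage of the $\CB$-analysis each isolated point of the current derived theory is isolated by a \emph{minimal} pp-pair, then the factor lattice $L_\lam$ appearing in the $m$-dimension construction coincides with the pp-lattice of the $\lam$-th $\CB$-derivative $T_{CM}^{(\lam)}$. Combined with Theorem~\ref{cb-val}, this would force the $m$-dimension to equal the $\CB$-rank, namely $2$.

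First, I would check the minimal-pair isolation condition at each of the two relevant levels. At level $0$ this is exactly Remark~\ref{in}: every finitely generated $I_n$ with $n<\fty$ is isolated by a minimal pair coming either from the left almost split sequence (for $n\geq 1$) or from the irreducible map $R\xr{y}I_1$ (for $n=0$). These simple intervals in $L_{CM}$ are precisely what collapses in the passage $L_0\to L_1$. At level $1$, the isolated points of the derived space are $I_\fty$ and $\wt R$ by Lemma~\ref{cb-1}; using the basic open sets read off Figure~\ref{fig3} in Lemma~\ref{fty} and Lemma~\ref{tR}, I would check that once the level-$0$ intervals have collapsed, the chains $\{x\in I_\fty> (x\in I_m)+(xy\in I_\fty)\}_m$ and $\{xy^n\in I_n> xy\in R\}_n$ each shrink to a minimal pair in $L_1$ isolating the respective point. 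Finally, at level $2$ only the endofinite points $G$ and $Q$ remain, and the pair $x\in I_\fty$ over $x\in R$, respectively $xy\in R$ over $v=0$, provides the minimal separating witness in $L_2$.

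With these verifications in hand, the cited theorem gives $L_\lam\cong L_{T_{CM}^{(\lam)}}$ for $\lam=0,1,2$. Since $T_{CM}^{(2)}=\{G,Q\}$ is nonempty, the lattice $L_2$ is nontrivial; since $T_{CM}^{(3)}=\emptyset$, the lattice $L_3$ is trivial. By definition of $m$-dimension, this gives $m\text{-}\dim L_{CM}=2$.

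The main obstacle is the verification of minimality at levels $1$ and $2$: minimality in a derived lattice is not inherited from the original lattice of $\CM$-formulae, and one must argue that the intermediate pp-pairs separating (say) $I_\fty$ from the finite points really do all become equal after the $I_n$ have been collapsed. Fortunately, the explicit combinatorial description of the interval $[v=0,vx=0]$ in Proposition~\ref{int-desc} together with the distributivity from Lemma~\ref{dist} turns this into a finite, purely diagrammatic inspection of Figure~\ref{fig3}, so no genuinely new model-theoretic input is required beyond what has been assembled in the previous section.
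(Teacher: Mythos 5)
Your proposal follows exactly the same route as the paper: invoke the parallel between $m$-dimension analysis and Cantor--Bendixson analysis from \cite[Sect.\ 5.3.2]{Preb} under the hypothesis that every point is isolated by a minimal pair at each level, verify that hypothesis using Remark~\ref{in}, Lemma~\ref{fty}, Lemma~\ref{tR} and the level-$2$ data, and then transfer the $\CB$-rank computation of Theorem~\ref{cb-val} to the lattice. The paper is terser (it simply states that ``from the analysis of the previous section it follows that this is the case''), and it supplements the claim post hoc with the explicit descriptions of $L_1$ and $L_2$ in Propositions~\ref{t-prime} and~\ref{l-2}; you instead flag and then address the genuine subtlety that minimality in the derived lattices $L_1$, $L_2$ must be re-checked rather than inherited, which is a worthwhile clarification but not a different method.
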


Recall (see \cite[Sect. 6.1]{Preb}) that a \emph{ring of definable scalars} of a module $M$ consists of
biendomorphisms of $M$ defined by pp-formulas $\phi(v,w)$ in two free variables, where the first variable describes
the domain, and the second sets the image of this map. In our case these objects are easily calculated.

\begin{prop}\label{def-scal}
1) The ring of definable scalars of the module $I_n$ coincides with the overring $R_n$ of $R$ generated (over $R$)
by $xy^{-n}$.

2) The ring of definable scalars of $I_{\fty}$ is the ring $F[[y]]$.

3) The ring of definable scalars of $\wt R$ equals $\wt R$.

4) The ring of definable scalars of $G$ is the field $F((y))$.

5) The ring of definable scalars of $Q$ equals $Q$.
\end{prop}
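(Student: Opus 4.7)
The plan is, for each module $M$ on the list, to sandwich the ring $S_M$ of definable scalars between a lower bound given by exhibiting pp-formulas defining multiplication by certain generators, and an upper bound given by the biendomorphism ring of $M$, which here coincides with the commutative ring $\End_R(M)$.

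First I would compute each endomorphism ring. The modules $I_n$, $\wt R$, and $Q$ are realized as $R$-submodules of $Q$, so $\End_R(M) = \{q\in Q : qM\seq M\}$ acts by multiplication: for $I_n$, the condition $qy^n\in I_n$ means $qy^n=ax+by^n$, hence $q=axy^{-n}+b\in R_n$, yielding $\End_R(I_n)=R_n$; the analogous calculations for $\wt R$ and $Q$ give $\wt R$ and $Q$ themselves. The modules $I_{\fty}$ and $G$ are annihilated by $x$ and hence $R/xR\cong F[[y]]$-modules, so their endomorphism rings reduce to $\End_{F[[y]]}(F[[y]])=F[[y]]$ and $\End_{F[[y]]}(F((y)))=F((y))$. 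Each such ring is commutative, so it agrees with the biendomorphism ring and bounds $S_M$ from above.

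For the lower bound, any $r\in R$ gives a definable scalar via $\phi(v,w)\doteq (w=rv)$, so only the extra generators need pp-formulas. The scalar $xy^{-n}$ is defined by $\phi(v,w)\doteq (wy^n=vx)$: uniqueness of $w$ follows from the $y$-torsion freeness of $\CM$-modules (Remark \ref{cm-def}), and for existence one checks that $v=ay^n+bx\in I_n$ yields $w=ax$, while $v=r+xq\in\wt R$ with $r\in R$, $q\in Q$ yields $w=rxy^{-n}\in xQ\seq\wt R$. The scalar $y^{-1}$ is defined by $\phi(v,w)\doteq (wy=v)$, which is functional on $Q$ and $G$ because $y$ acts bijectively on both. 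Combining with $R$-multiplication, one obtains $R_n=R[xy^{-n}]\seq S_{I_n}$, $\wt R=R+\sum_n xy^{-n}R=R+xQ\seq S_{\wt R}$, $Q=R[y^{-1}]\seq S_Q$, and $F((y))=F[[y]][y^{-1}]\seq S_G$, matching the upper bounds. For $I_{\fty}$ no extra generator is needed, and $S_{I_{\fty}}=F[[y]]$ is realized as the image of $R$ under the quotient $R\to R/xR$.

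The main technical point will be the endomorphism-ring calculation for $I_n$---justifying carefully that every $q\in Q$ with $qI_n\seq I_n$ must lie in $R_n$---together with verifying that the candidate witness $w$ in the pp-formula $wy^n=vx$ really lies in the ambient module (not just in $Q$) when $v$ ranges over $\wt R$. The remaining items reduce, via $R/xR\cong F[[y]]$, to the well-known description of the ring of definable scalars over the discrete valuation ring $F[[y]]$ and its field of fractions $F((y))$.
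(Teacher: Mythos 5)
Your proposal is correct and uses the same sandwich strategy as the paper: exhibit a pp-formula defining the key extra scalar (the paper's phrase ``each element of this module when multiplied by $x$ is uniquely divisible by $y$'' is exactly your formula $wy^n = vx$), then bound the ring of definable scalars above by the biendomorphism ring, which equals $\End_R(M)$ because the latter is commutative and $M$ is free of rank one over it. The difference is in how case 1) is closed out: the paper argues via the chain of Gorenstein minimal overrings $R \subset R_1 \subset R_2 \subset \cdots$ together with the $R$-module isomorphism $I_n \cong R_n$ given by $m \mapsto my^{-n}$, whereas you compute $\End_R(I_n) = \{q\in Q : qI_n \subseteq I_n\} = R_n$ directly. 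Your route is more uniform across all five cases (the paper disposes of 2)--5) with ``by inspection''), at the cost of not highlighting the structural role of Gorensteinness; both are fine.
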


Note that in each case 1)--5) the module is isomorphic to its ring of definable scalars.

\begin{proof}
We will consider only the case 1), the remaining cases are by inspection.

Because $R$ is Gorenstein, it has a unique minimal overring which is easily checked to coincide with $R_1$.
Now looking at the diagram for $I_1$ from Section \ref{S-fg} we see that multiplication by $xy^{-1}$ defines a
biendomorphism of $I_1$: each element of this module when multiplied by $x$ is uniquely divisible by $y$.

Furthermore, because $I_1= \mm$ is isomorphic to $R_1$ (via the map $x\mapsto xy^{-1}$ and $y\mapsto 1$),
we conclude that the ring of definable scalars of $I_1$ coincides with $R_1$.

The ring $R_1$ is again Gorenstein, hence has a unique minimal overring $R_2$. Continuing this way we obtain the
desired.
\end{proof}

Recall that the lattice $L_{CM}$ of all $\CM$-formulas over $R$ is too large to be completely described. Its
first derivative is manageable.

\begin{prop}\label{t-prime}
The diagram on Figure \ref{fig4} represents the lattice $L_1$ of pp-formulas of the first derivative $T'$ of
the theory of Cohen--Macaulay modules over $R$. For instance this lattice is distributive.
\end{prop}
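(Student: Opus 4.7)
My plan is to exploit the parallelism between the $m$-dimension analysis and the Cantor--Bendixson analysis established in the preceding discussion. Since each isolated point $I_n$, $n<\fty$, in $\ZCM$ is isolated by a minimal pair (Remark \ref{in}), the lattice $L_1$ coincides with the lattice of pp-formulae of the first Cantor--Bendixson derivative $T_{\CM}^{(1)}$ of the theory of $\CM$-modules. By Corollary \ref{isol} this derivative theory has exactly four indecomposable pure injective models, namely $I_{\fty}$, $\wt R$, $G$ and $Q$.

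First I would argue that $L_1$ embeds into the product $L_{I_{\fty}}\times L_{\wt R}\times L_G\times L_Q$ of the pp-lattices of these four modules. Two pp-formulae are equivalent in $T_{\CM}^{(1)}$ precisely when they evaluate to the same subgroup on every indecomposable pure injective in the Ziegler closure of $\{I_{\fty},\wt R,G,Q\}$; since each excluded point $I_n$ is isolated, this closure contains no extra indecomposable pure injective, so evaluation on these four modules is faithful.

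Next I would verify that each of the four factor lattices is a chain. For $I_{\fty}\cong xR$ this is clear since its submodules are the $xy^nR$. For $\wt R$ it follows from Remark \ref{val}: the principal ideals already form a chain, and the only non-principal submodule $\cap_n y^n\wt R=\cup_m xy^m\wt R$ slots between them, remaining pp-definable as $(vx=0)(\wt R)$. For $G=F((y))$ the module is simple over its ring of definable scalars (Proposition \ref{def-scal}) and $x$ acts as zero, leaving only $0$ and $G$. For $Q$ an analogous analysis gives the chain $0\subset xQ\subset Q$. A finite product of chains is distributive, so distributivity of $L_1$ follows at once.

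Finally, to obtain the explicit diagram on Figure \ref{fig4} I would enumerate the pp-formulae that remain distinguished after collapsing every interval of finite length in $L_{CM}$. Inside the interval $[v=0,vx=0]$ of Figure \ref{fig3} the survivors are the southeastern rays giving the pp-types of $\wt R$ and $Q$, together with the chain $xy^n\in I_{\fty}$ and the top element $x\in I_{\fty}$; outside this interval one adds the divisibility and annihilation formulae in powers of $y$ that separate $G$ from the others. The main obstacle will be bookkeeping: verifying that no unexpected identifications occur among the surviving formulae, which amounts to checking that each remaining pair of inequivalent formulae is separated by evaluation on one of $I_{\fty}$, $\wt R$, $G$, $Q$. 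This is routine given the explicit uniserial shapes of these four modules.
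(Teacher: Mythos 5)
Your argument is correct, but it takes a genuinely different route from the paper's proof. The paper observes that $T'$ is the Ziegler closure of the two points $I_{\fty}$ and $\wt R$ of $\CB$-rank~$1$, then exploits the fact that $I_{\fty}$ (and consequently also $G$ and $Q$) lies in the definable subcategory generated by $\wt R$; hence $T'$ is exactly the theory of torsion-free $\wt R$-modules viewed over $R$, and the lattice $L_1$ is read off from the standard description of pp-lattices over a valuation ring (citing \cite[Ch.~11]{Punb}). You instead embed $L_1$ into the product $L_{I_{\fty}}\times L_{\wt R}\times L_G\times L_Q$ and check that each factor is a chain (so $L_1$, being a sublattice of a product of chains, is distributive), then reconstruct the diagram by separating formulae via evaluation. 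The two approaches buy different things: the paper's is shorter and uses the reduction to a single valuation ring, but leans on an external reference; yours is more self-contained and makes the distributivity argument completely explicit, at the cost of a direct but somewhat fussy verification that each of the four evaluation lattices is a chain and that the diagram is then recovered. You should, however, spell out that the evaluation map $L_1\to\prod_i L_{M_i}$ is a \emph{lattice} homomorphism (not merely injective): this holds because $(\phi+\psi)(M)=\phi(M)+\psi(M)$ and $(\phi\wg\psi)(M)=\phi(M)\cap\psi(M)$ pointwise, and it is this fact that lets you conclude distributivity from the product of chains. With that noted, the argument is sound, and in fact your observation that $L_1$ sits inside a product of four chains explains structurally why the width of Figure~\ref{fig4} is bounded by~$2$: evaluations on $G$ and $Q$ collapse almost everything, so the essential branching is only between the evaluations on $I_{\fty}$ and $\wt R$.
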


\begin{figure}[b]
$$
\def\objectstyle{\scriptstyle}
\xymatrix@C=14pt@R=12pt{%
&*+={\bullet}\ar@{-}[d]\ar@{}+<-15pt,2pt>*{v=v}&&\\
&*+={\bullet}\ar@{-}[dl]\ar@{-}[dr]&&\\
*+={\bullet}\ar@{-}[dr]\ar@{}+<-10pt,0pt>*{y\mid v}&&
*+={\bullet}\ar@{-}[dl]\ar@{}+<6pt,-6pt>*{.}\ar@{}+<10pt,-10pt>*{.}\ar@{}+<14pt,-14pt>*{.}&\\
&*+={\bullet}\ar@{-}[dl]\ar@{}+<6pt,-6pt>*{.}\ar@{}+<10pt,-10pt>*{.}\ar@{}+<14pt,-14pt>*{.}&&
*+={\bullet}\ar@{-}[dl]\ar@{}+<16pt,2pt>*{vx=0}\\
*+={\bullet}\ar@{}+<-12pt,0pt>*{y^2\mid v}\ar@{}+<6pt,-6pt>*{.}\ar@{}+<10pt,-10pt>*{.}\ar@{}+<14pt,-14pt>*{.}&&
*+={\bullet}\ar@{-}[dl]&\\
&*+={\bullet}\ar@{}+<0pt,-6pt>*{.}\ar@{}+<0pt,-10pt>*{.}\ar@{}+<0pt,-14pt>*{.}&&\\
&&&\\
&*+={\bullet}\ar@{-}[d]\ar@{}+<18pt,0pt>*{xy^{-1}\mid v}\ar@{}+<0pt,6pt>*{.}\ar@{}+<0pt,10pt>*{.}\ar@{}+<0pt,14pt>*{.}&&\\
&*+={\bullet}\ar@{-}[d]\ar@{-}[u]\ar@{}+<13pt,0pt>*{x\mid v}&&\\
&*+={\bullet}\ar@{}+<15pt,0pt>*{xy\mid v}\ar@{}+<0pt,-6pt>*{.}\ar@{}+<0pt,-10pt>*{.}\ar@{}+<0pt,-14pt>*{.}&&\\
&*+={\bullet}\ar@{}+<14pt,-2pt>*{v=0}&&\\
}
$$
\caption{}\label{fig4}
\end{figure}

\vspace{3mm}

\begin{proof}
It follows from the previous section that $T'$ is the closure, in the Ziegler spectrum, of points $I_{\fty}$ and
$\wt R$ of $\CB$-rank $1$. Since $I_{\fty}$ is definable in $\wt R$ it follows that $T'$ coincides with the theory
of $\CM$-modules defined over $\wt R$. Since $\wt R$ is a valuation ring, this lattice is easily calculated -
see \cite[Ch. 11]{Punb} for a more general setting.
\end{proof}

On the next level of $\CB$-analysis we obtain a finite length lattice.

\begin{prop}\label{l-2}
The following diagram represents the lattice $L_2$ of pp-formulae of the second derivative $T''$ of the theory
$T_{\CM}$ of Cohen--Macaulay modules over $R$.
\end{prop}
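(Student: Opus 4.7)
The plan is to identify the second derivative theory $T''$ with the theory of the endofinite module $G\oplus Q$, and then directly compute its lattice of pp-definable subgroups. By Theorem \ref{cb-val} together with Lemma \ref{cb-1} and the surrounding $\CB$-analysis, the only points of $\ZCM$ surviving two derivations are $G$ and $Q$; since both are closed and endofinite, $T''$ is precisely the theory of $G\oplus Q$. Furthermore, at each of the preceding stages every removed point was isolated by a minimal pair (this is exactly what made the $m$-dimension analysis of Proposition \ref{m-dim} run parallel to the Cantor--Bendixson analysis), so by the standard correspondence \cite[Sect.~5.3.2]{Preb} the factor lattice $L_2 = L_{CM}/{\sim_2}$ is canonically identified with the lattice of pp-definable subgroups of $G\oplus Q$.

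The next step is to compute the pp-lattices of $G$ and $Q$ separately. Every pp-definable subgroup of a module is invariant under its ring of definable scalars, so Proposition \ref{def-scal} reduces the computation to linear algebra: pp-definable subgroups of $G$ are $F((y))$-submodules of the simple $F((y))$-module $G=F((y))$, namely $0$ and $G$; and pp-definable subgroups of $Q$ are $Q$-ideals of $Q$, namely $0$, $xQ$ and $Q$ (the local ring $Q\cong F((y))[x]/(x^2)$ has a unique nontrivial ideal $xQ$). Since pp-formulae commute with direct sums, every pp-definable subgroup of $G\oplus Q$ decomposes as $\phi(G)\oplus\phi(Q)$, so the task reduces to determining which of the six pairs in $\{0,G\}\times\{0,xQ,Q\}$ are actually realised.

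Two pointed morphisms supply the necessary constraints. The quotient $Q\twoheadrightarrow Q/xQ\cong G$ forces $\phi(G)=G$ whenever $\phi(Q)$ contains an element outside $xQ$; and the morphism $G\to Q$ sending $1$ to $x$ (well-defined because $Gx=0=x^2$, with image equal to $xQ$) forces $\phi(Q)\supseteq xQ$ whenever $\phi(G)=G$. These two observations rule out the pairs $(0,Q)$ and $(G,0)$, leaving exactly the four pairs $(0,0)$, $(0,xQ)$, $(G,xQ)$, $(G,Q)$. Each is realised: by $v=0$, $x\mid v$, $vx=0$ and $v=v$ respectively, as one verifies directly. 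Under componentwise inclusion these four pairs form a chain, so $L_2$ is the $4$-element chain
$$v=0 \ <\ x\mid v \ <\ vx=0 \ <\ v=v.$$
The only delicate point is the completeness half of the morphism argument, i.e.\ showing that no further combinations $(A,B)$ arise; once the two morphisms above are in hand, the rest is inspection.
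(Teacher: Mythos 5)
Your proof is correct, but it takes a genuinely different route from the paper's. Both arguments begin the same way, identifying $T''$ with the theory generated by the two closed, endofinite points $G$ and $Q$, and both appeal to the parallel between the $\CB$-analysis and the $m$-dimension analysis to identify $L_2$ with the pp-lattice of that theory. They then diverge: the paper notices that $y$ acts as an \emph{automorphism} on both $G$ and $Q$, so that $y^{-1}$ is a definable scalar of $T''$ as a whole; this exhibits $T''$ as the theory of modules over the uniserial ring $S = F((y))[[x]]/(x^2)$ of length $2$, whose pp-lattice is the well-known $4$-element chain, and the computation is finished in one stroke. You instead compute the pp-definable subgroup lattices of $G$ and $Q$ separately using the rings of definable scalars from Proposition~\ref{def-scal} (getting $\{0,G\}$ and $\{0,xQ,Q\}$), use the fact that pp-formulas commute with direct sums to reduce to deciding which pairs $(\phi(G),\phi(Q))$ occur, and rule out the two incompatible pairs by evaluating pp-formulas along the morphisms $Q\twoheadrightarrow Q/xQ\cong G$ and $G\to Q$, $1\mapsto x$. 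Your argument is longer and more hands-on, but avoids the (slightly slick) step of recognizing the definable scalar $y^{-1}$ and appealing to the theory of uniserial rings; what the paper's approach buys is brevity, what yours buys is a self-contained and more transparent verification that the chain has exactly four terms. Both are valid, and the minimal-pair labels attached to the three intervals in the paper's diagram fall out equally naturally from your identification of the realizing formulas $v=0$, $x\mid v$, $vx=0$, $v=v$.
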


$$
\vcenter{%
\xymatrix@C=20pt@R=20pt{%
*+={\bullet}\ar@{}+<12pt,2pt>*{_{v=v}}\ar@{-}[d]\ar@{}+<-12pt,-12pt>*{_{Q}}\\
*+={\bullet}\ar@{}+<14pt,0pt>*{_{vx=0}}\ar@{-}[d]\ar@{}+<-12pt,-12pt>*{_{G}}\\
*+={\bullet}\ar@{}+<12pt,0pt>*{_{x\mid v}}\ar@{-}[d]\ar@{}+<-12pt,-12pt>*{_{Q}}\\
*+={\bullet}\ar@{}+<12pt,-2pt>*{_{v=0}}
}}
$$

\vspace{3mm}

On this diagram we match simple intervals with points of the Ziegler spectrum they isolate.

\begin{proof}
From the previous section it follows that $T''$ is the closure of points $Q$ and $G$ of $\CB$-rank $2$.
Because $y$ acts as an automorphism on both modules, we conclude that $y^{-1}$ is a definable scalar of this theory.
Therefore $T''$ is just the theory of $S= F((y))[[x]]/(x^2)$-modules. The last ring is uniserial of length
$2$, hence its lattice of pp-formulas is well known.
\end{proof}

\section{Ringel's quilt}

This combinatorial object was introduced by Ringel \cite{Rin00} (see also \cite{Rin11}), under the name of the
Auslander--Reiten quilt, when investigating the module category of domestic string (finite dimensional) algebras.
It serves as a tool for sewing components of the $\AR$-quiver for better understanding of morphism between modules
from different $\AR$-components. The devices used in implementing this construction are infinitely generated
indecomposable pure injective modules and irreducible morphisms between them.

We will proceed in the same vein. First we need some supply of irreducible maps and almost split sequences.

\begin{lemma}\label{Q-map}
The map $Q\xr{x} G$ is irreducible in the category of $\CM$-modules.
\end{lemma}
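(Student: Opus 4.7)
The plan is to verify directly that $x\colon Q\to G$ is neither a split mono nor a split epi, and that every factorisation $x=gh$ with $M$ Cohen--Macaulay forces $h$ to be a split mono or $g$ a split epi. Identifying $Q\cong F((y))[x]/(x^2)$, the multiplication-by-$x$ map is the quotient $\pi\colon Q\twoheadrightarrow Q/xQ\cong G$. It is not injective (since $xQ\neq 0$), and any $R$-linear section $s\colon G\to Q$ would satisfy $s(G)\,x=s(Gx)=0$, so $s(G)\subseteq \Ann_Q(x)=xQ$; but then $\pi s=0\neq \mathrm{id}_G$, a contradiction. So $x$ is neither split mono nor split epi.

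Now suppose $x=gh$ with $h\colon Q\to M$ and $g\colon M\to G$ for $M\in\CM$. From $gh=\pi$ we get $\ker h\subseteq \ker\pi=xQ$. The crux is to classify the $R$-submodules of $xQ$: since $x$ annihilates $xQ$, they coincide with the $F[[y]]$-submodules of $F((y))$, which form the totally ordered chain
$$
0\;\subset\;\cdots\;\subset\; xy^{n+1}F[[y]]\;\subset\; xy^{n}F[[y]]\;\subset\;\cdots\;\subset\; xQ\qquad(n\in\Z).
$$
This is the main technical step, routine once one uses that $F[[y]]$ is a DVR and that any non-zero $F[[y]]$-submodule of $F((y))$ with bounded valuation is principal.

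With this trichotomy I split into three cases. If $\ker h=0$, then $h$ is mono, and since $Q$ is injective (stated in Section~2 using that $R$ is Gorenstein) $h$ is split mono. If $\ker h=xQ$, then $h$ factors as $h=h'\pi$ with $h'\colon G\to M$; from $gh'\pi=\pi$ and the surjectivity of $\pi$ one gets $gh'=\mathrm{id}_G$, so $g$ is split epi. Finally, if $\ker h=xy^nF[[y]]$ for some $n\in\Z$, then $xy^{n-1}\in Q$ has $(xy^{n-1})\,y=xy^n\in\ker h$ but $xy^{n-1}\notin\ker h$, producing a non-zero $y$-torsion element of $Q/\ker h$. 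Since $\bar h\colon Q/\ker h\hookrightarrow M$ and $M$ is Cohen--Macaulay (so $y$-torsion-free by Remark~\ref{cm-def}), this case is impossible. The Cohen--Macaulay hypothesis is exactly what eliminates the intermediate kernels and leaves only the two desired options; the subtle point, and the only place where real work is needed, is the submodule classification of $xQ$.
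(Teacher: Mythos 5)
Your proof is correct, and it takes a genuinely different route from the paper. The paper argues via pp-types: if $f\colon Q\to N$ is not a (split) monomorphism, then by \cite[Prop.~4.3.45]{Preb} it must strictly increase the pp-type of $x\in Q$, and since $x\in Q$ is critical over zero (Figure~\ref{fig3}) the only possibility is $f(x)=0$; then $y$-torsion-freeness of $N$ forces $\ker f\supseteq Qx$, and combined with $\ker f\subseteq\ker(gf)=Qx$ one gets $\ker f=Qx$. You instead sidestep the model-theoretic machinery entirely: you observe a priori that $\ker h\subseteq xQ$, classify all $R$-submodules of $xQ$ by identifying them with $F[[y]]$-submodules of $F((y))$ (a chain $0\subset\cdots\subset xy^{n}F[[y]]\subset\cdots\subset xQ$), and then rule out the intermediate kernels by exhibiting explicit $y$-torsion in $Q/\ker h\hookrightarrow M$. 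Both use Cohen--Macaulayness (i.e.\ $y$-torsion-freeness, Remark~\ref{cm-def}) as the crucial hypothesis, but at different points: the paper uses it to push $f(x)=0$ up to $f(Qx)=0$, whereas you use it to discard the intermediate kernels directly. Your argument is more elementary and self-contained --- it does not invoke Figure~\ref{fig3}, pp-types, or the ``critical over zero'' notion --- at the cost of the extra (but routine) DVR-submodule classification; you also explicitly verify that $x\colon Q\to G$ is neither split mono nor split epi, which the paper leaves implicit.
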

\begin{proof}
Suppose that $f: Q\to N$ and $g:N\to G$, where $N$ is a $\CM$-module, are morphisms whose composition is the above
map. If $f$ is a monomorphism it splits, because $Q$ is injective. Otherwise $f$ increases the pp-type of $x\in Q$,
hence (see Figure \ref{fig3}) annihilates this element. Because $N$ is $\CM$, it follows that $f(Qx)= 0$, hence
$\ker f= Qx$, otherwise $g$ fail to exists.

Since $Q/Qx\cong G$ via $1\mapsto x$, we conclude that $f$ induces the map $\bar f: G\to N$ such that the composition
$g\bar f: G\to G$ sends $x$ to $x$, hence is an isomorphism. It follows that $g$ splits.
\end{proof}

Note that this irreducible epimorphism is included into a short exact sequence $0\to G\to Q\xr{x} G\to 0$.
However this sequence is far from being almost split: for instance the natural inclusion $G\subset \wt R$
cannot be factored through $Q$.

Recall that a module $M$ over a $\CM$-ring $S$ is said to be \emph{free on the punctured spectrum}, if its localization
$M_P$ is free over the localized ring $S_P$ for each non-maximal prime ideal $P$. Further $S$ is an \emph{isolated
singularity} if $S_P$ is a regular ring for each non-maximal prime ideal $P$.

By Auslander's result (see \cite[Thm. 13.8]{L-W}) if $M$ is a f.g. $\CM$-module free on the punctured spectrum, then
$M$ is the sink of an $\AR$-sequence in the category of finitely generated $\CM$-modules. For $R$ this applies to
finite modules $I_n$, and it is easily seen that the corresponding $\AR$-sequences retain their defining
properties in the category of all $\CM$-modules.

The module $I_{\fty}$ is not free on the punctured spectrum: if $P= xR$, then $I_{\fty}$ localized at $P$ is the
generic module $G$ which is not free over the ring $R_P= F((y))[[x]]/x^2$. It follows from \cite[Prop. 13.3]{L-W}
that $I_{\fty}$ cannot be a sink of an $\AR$-sequence in the category of finitely generated $\CM$-modules.

However, it follows from another result of Auslander \cite[Thm. 5]{Aus86} (see also \cite{Her92}) that $I_{\fty}$
is the sink of an $\AR$-sequence in the category of all $R$-modules whose source is indecomposable and pure 
injective. Using an approximation of this module in the category of all $\CM$-modules we found the following 
$\AR$-sequence, now in the category of $\CM$-modules. We suppress calculations, because the result is easily verified.

\begin{prop}\label{ar}
The following is an $\AR$-sequence in the category of Cohen--Macaulay modules over $R$.

$$
0\to \wt R\xr{f=\, \bsm y\\ x\esm} \wt R\oplus I_{\fty} \xr{(x, -y)} I_{\fty}\to 0\,.
$$
\end{prop}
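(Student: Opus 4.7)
The plan is to verify three properties in succession: the sequence is exact; it does not split; and the right map $(x,-y)$ is right-almost-split in the $\CM$-category. Since $\wt R$ (Remark \ref{more}) and $I_\infty$ (Fact \ref{class-fg}) are both indecomposable, these three conditions force the sequence to be the AR-sequence ending at $I_\infty$. As an alternative I could invoke Auslander's theorem \cite[Thm. 5]{Aus86}, which guarantees an AR-sequence in $\Mod R$ ending at $I_\infty$ with indecomposable pure injective source, and then identify this source with $\wt R$ via a Hom-enumeration; either route has the same core.

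For exactness, the composition $(x,-y)\binom{y}{x}=xy-yx=0$ by commutativity; injectivity of $\binom{y}{x}$ follows from $y$-torsion freeness of $\wt R$ (Remark \ref{cm-def}); surjectivity of $(x,-y)$ is clear because $(r,0)\mapsto xr$ covers $I_\infty=xR$. For the middle, given $(a,b)$ with $xa=yb$, I would use $\wt R=R+xQ$ (Remark \ref{val}) to write $a=r+xq$ and $b=xs$; then $x(r-ys)=0$ together with $\Ann_R(x)=xR$ yields $r=ys+xt$, so $c:=s+x(t+q)y^{-1}\in R+xQ=\wt R$ satisfies $yc=a$ and $xc=b$. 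For non-splitting, a section would send $x$ to some $(\alpha,\beta)$ with $x\alpha-y\beta=x$; but $x\cdot(\alpha,\beta)=0$ forces $x\alpha=0$, collapsing the equation to $y\beta=-x$, which is impossible since $yI_\infty=xyR$ does not contain $x$.

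The almost-split property I would reduce to $N$ an indecomposable $\CM$-module, enumerated (by Fact \ref{class-fg} and Theorem \ref{class-point}) as $R$, $I_n$ ($1\leq n\leq\infty$), $\wt R$, $Q$, $G$. Both $\Hom(Q,I_\infty)$ and $\Hom(G,I_\infty)$ vanish since the maximal $y$-divisible submodule of $I_\infty$ is zero. For the remaining cases, a non-retraction $f: N\to I_\infty$ either has image inside $yI_\infty$---in which case $f=y\cdot h$ and $g:=(0,-h)$ factors $f$ through the $I_\infty$-component of $(x,-y)$---or is surjective, which forces $N\not\cong I_\infty$, and then $f$ factors through the $\wt R$-component as $g=(f',0)$ for some lift $f':N\to\wt R$ along the surjection $x\cdot:\wt R\to I_\infty$. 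The hard part will be the case $N=\wt R$: not being finitely generated, the lift must be consistent across the generators $\{1\}\cup\{xy^{-n}:n\geq 1\}$. This works because every $f:\wt R\to I_\infty$ automatically vanishes on $xQ$ (from $y^n f(xy^{-n})=f(x)=0$ and $y$-torsion freeness of $I_\infty$), so $f$ is determined by $f(1)=xc$, and the lift extends as $g(1)=(c,0)$, $g(xy^{-n})=(xcy^{-n},0)\in xQ\subseteq\wt R$---this absorption of $y$-divisions by $xQ$ is the structural reason $\wt R$ must appear as the source.
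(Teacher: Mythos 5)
Your verification of exactness and non-splitting is correct and in fact more detailed than the paper's (which dismisses exactness as ``by inspection''). The middle-term exactness argument via $\wt R=R+xQ$ and $\Ann_R(x)\cap R=xR$ checks out.

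The substantive difference is in the almost-split step, and here you have chosen a genuinely different route from the paper. The paper does \emph{not} verify that $(x,-y)$ is right almost split; instead, using that $\End(I_\infty)$ is local, it invokes \cite[Prop.~4.4]{Aus78} to reduce to showing $f=\bsm y\\x\esm$ is \emph{left} almost split, and then it handles an arbitrary non-section $h:\wt R\to N$ by a pp-type argument: since $\wt R$ is indecomposable pure injective, $h$ must strictly increase the pp-type of $x\in\wt R$ (\cite[Prop.~4.3.45]{Preb}), one reads off from Figure~\ref{fig3} that the increased pp-type already contains the pp-type of $f(x)=(xy,0)$, and since $N$ is pure injective in the CM category one produces a factoring morphism $u$ realizing that pp-type inclusion, after which subtracting $fu$ reduces to $h(x)=0$ and $h$ factors through $\wt R\xr{x}I_\infty$. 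The point is that this argument is \emph{uniform in $N$} and never needs to know what the indecomposable CM-modules are.

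Your route instead has a real gap at precisely the step the paper's argument is designed to avoid. You ``reduce to $N$ an indecomposable $\CM$-module, enumerated as $R$, $I_n$ $(1\le n\le\infty)$, $\wt R$, $Q$, $G$.'' But this list is the classification of indecomposable \emph{pure injective} CM-modules (Theorem~\ref{class-point}); it is not a classification of all indecomposable CM-modules, and moreover an arbitrary CM-module need not decompose into a direct sum of indecomposables at all. So the reduction is unjustified twice over. (The natural attempt to repair it—replace $N$ by a finitely generated submodule $N'$ with $g(N')=g(N)$, which makes $N'$ a finite direct sum of the $I_n$'s—does not obviously close the gap either, since a factorization of $g|_{N'}$ through $(x,-y)$ need not extend to one of $g$.) The cases you do treat are handled correctly—$\Hom(Q,I_\infty)=\Hom(G,I_\infty)=0$, the image-in-$yI_\infty$ dichotomy, and the $xQ$-vanishing argument for $N=\wt R$ are all sound—but they do not exhaust the problem. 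If you want to keep the right-almost-split approach, you would need an argument that works for arbitrary $N$, for instance by passing to the pure injective envelope of $N$ and using that $I_\infty$ and $\wt R\oplus I_\infty$ are pure injective, or simply switch to the left almost split side as the paper does, where the pure injectivity of $\wt R$ gives you the needed leverage for free.
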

\begin{proof}
The exactness of this sequence is easily shown by inspection. Since the endomorphism ring of $I_{\fty}$ is local,
by \cite[Prop. 4.4]{Aus78} it suffices to check that $f$ is left almost split.

Suppose that $h: \wt R\to N$ is a non-split monomorphism, where $N$ is a $\CM$-module. In particular for
$n= h(x)$ we obtain $nx= 0$.

$$
\xymatrix@C=20pt@R=20pt{%
\wt R\ar[r]^(.4)f\ar[d]_h& \wt R\oplus I_{\fty}\ar@{-->}[ld]^u\\
N&
}
$$

\vspace{3mm}

Observe that $f(x)= (xy,0)$ whose pp-type $p$ in $\wt R$ (see Figure \ref{fig3}) is generated by the formulas
$xy^n\in I_{n-1}$, $n= 1, \dots$.

Since $\wt R$ is pure injective and indecomposable by \cite[Prop. 4.3.45]{Preb} it follows that $h$ increases the
pp-type of $x\in \wt R$, hence (see Figure \ref{fig3} again) the pp-type $q$ of $h(x)$ in $N$ contains the formula
$xy\in R$. But then $q$ contains any conjunction $(xy\in R)\wg (xy\in I_n)= (xy^n\in I_{n-1})$, $n= 2, \dots$,
therefore $p\seq q$.

Since $N$ is pure injective, there exists a morphism $u: \wt R\to N$ sending $xy$ to $h(x)$, therefore
$h- fu: \wt R\to N$ satisfies $(h- fu)(x)= 0$. Replacing $h$ by $h- fu$ we may assume that $h(x)= 0$.
Since $N$ is $\CM$, it follows that $h(Qx)= 0$, hence $h$ factors through $\wt R\xr{x} I_{\fty}$, as desired.
\end{proof}

The following corollary is straightforward.

\begin{cor}\label{irr-map}
The following morphisms are irreducible in the category of all Cohen--Macaulay $R$-modules.

1) $\wt R\xr{y} \wt R$; \quad 2) $\wt R\xr{x} I_{\fty}$; \quad and \quad 3) $I_{\fty}\xr{y} I_{\fty}$.
\end{cor}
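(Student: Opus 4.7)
The plan is to read the three listed maps directly off the almost split sequence produced in Proposition \ref{ar}, using the standard fact that the components of the source and sink maps of an $\AR$-sequence (into, respectively out of, indecomposable summands) are irreducible morphisms.

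More precisely, recall the general principle: if $0\to A\xr{f} B_1\oplus\dots\oplus B_k\xr{g} C\to 0$ is an $\AR$-sequence with $A, C, B_i$ indecomposable, then each component $f_i: A\to B_i$ of $f$ is irreducible (because $f$ is left almost split and $f_i$ is the composition of $f$ with the projection onto $B_i$, which cannot split since otherwise $f$ would split off a direct summand); dually each component $g_i: B_i\to C$ of $g$ is irreducible. A short verification of this principle using the definition of irreducibility (any factorization $f_i= v u$ forces either $u$ a split mono or $v$ a split epi) fits in a few lines and is the only nontrivial point; it is analogous to the argument recorded in, e.g., \cite[Prop. 4.4]{Aus78} which was already invoked in the proof of Proposition \ref{ar}.

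Now apply this to the $\AR$-sequence
$$
0\to \wt R\xr{\bsm y\\ x\esm} \wt R\oplus I_{\fty}\xr{(x,-y)} I_{\fty}\to 0
$$
established in Proposition \ref{ar}: the modules $\wt R$ and $I_{\fty}$ are indecomposable (as was shown in Remark \ref{more} and Section \ref{S-fg}). Reading off the components of the left map gives irreducibility of $\wt R\xr{y}\wt R$ (1) and $\wt R\xr{x} I_{\fty}$ (2); reading off the components of the right map gives $\wt R\xr{x} I_{\fty}$ again and $I_{\fty}\xr{-y} I_{\fty}$, and since multiplication by $-1$ is an automorphism of $I_{\fty}$, irreducibility of the latter is equivalent to irreducibility of $I_{\fty}\xr{y} I_{\fty}$ (3).

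The main (and essentially only) obstacle is the general principle about components of $\AR$-sequences, but this is classical Auslander--Reiten theory carried out in the ambient category of all $\CM$-modules, where the sequence of Proposition \ref{ar} has already been shown to be almost split; so the corollary really is immediate.
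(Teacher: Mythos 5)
Your proposal is correct and follows exactly the approach the paper has in mind: the paper's proof is the terse remark ``Either by projecting the above $\AR$-sequence, or directly,'' and you have simply spelled out what ``projecting'' means, namely reading off the components of the source and sink maps of the $\AR$-sequence from Proposition~\ref{ar} and invoking the standard Auslander--Reiten fact that these components are irreducible, together with the harmless observation that $-y$ and $y$ give the same morphism up to the automorphism $-1$.
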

\begin{proof}
Either by projecting the above $\AR$-sequence, or directly.
\end{proof}

Now we are in a position to draw - see Figure \ref{fig5} - Ringel's quilt of the category of $\CM$-modules, and
the procedure is very similar to what Ringel did in \cite{Rin00}.

\begin{figure}
$$
\vcenter{%
\xymatrix@C=16pt@R=16pt{%
&&&*+[o][F]{\bullet}\ar[dr]_y\ar@{}+<0pt,10pt>*{_{I_{\fty}}}
\ar@{}+<-4pt,4pt>*{.}\ar@{}+<-8pt,8pt>*{.}\ar@{}+<-12pt,12pt>*{.}&&&\\
&&*+[o][F]{\bullet}\ar[dl]\ar[dr]_y\ar@{}+<0pt,10pt>*{_{I_2}}
\ar@{}+<-4pt,4pt>*{.}\ar@{}+<-8pt,8pt>*{.}\ar@{}+<-12pt,12pt>*{.}
\ar@{}+<4pt,4pt>*{.}\ar@{}+<8pt,8pt>*{.}\ar@{}+<12pt,12pt>*{.}&&
*+={\bullet}[dr]\ar@{}+<0pt,10pt>*{_{I_{\fty}}}\ar[dr]_y&&\\
&*+[o][F]{\bullet}\ar[dl]\ar[dr]_y\ar@{}+<0pt,10pt>*{_{I_1}}
\ar@{}+<-4pt,4pt>*{.}\ar@{}+<-8pt,8pt>*{.}\ar@{}+<-12pt,12pt>*{.}&&
*+={\bullet}\ar[dl]\ar[dr]_y\ar@{}+<0pt,10pt>*{_{I_3}}
\ar@{}+<4pt,4pt>*{.}\ar@{}+<8pt,8pt>*{.}\ar@{}+<12pt,12pt>*{.}&&
*+={\bullet}\ar@{}+<0pt,10pt>*{_{I_{\fty}}}
\ar@{}+<4pt,-4pt>*{.}\ar@{}+<8pt,-8pt>*{.}\ar@{}+<12pt,-12pt>*{.}&\\
*+[o][F]{\bullet}\ar[dr]_y\ar@{}+<-10pt,0pt>*{_{R}}\ar@{}+<0pt,10pt>*{.}\ar@{}+<0pt,15pt>*{.}\ar@{}+<0pt,20pt>*{.}
&&*+={\bullet}\ar[dl]\ar[dr]_y\ar@{}+<0pt,10pt>*{_{I_2}}&&
*+={\bullet}\ar@{-}[dl]\ar@{}+<0pt,10pt>*{_{I_4}}
\ar@{}+<4pt,4pt>*{.}\ar@{}+<8pt,8pt>*{.}\ar@{}+<12pt,12pt>*{.}
\ar@{}+<4pt,-4pt>*{.}\ar@{}+<8pt,-8pt>*{.}\ar@{}+<12pt,-12pt>*{.}
&&*+={\bullet}\ar@{}+<0pt,10pt>*{_{G}}&&&&\\
&*+[o][F]{\bullet}\ar[dl]\ar[dr]_y\ar@{}+<0pt,10pt>*{_{I_1}}&&
*+={\bullet}\ar[dl]\ar@{}+<0pt,10pt>*{_{I_3}}\ar@{}+<4pt,-4pt>*{.}\ar@{}+<8pt,-8pt>*{.}\ar@{}+<12pt,-12pt>*{.}&&
*+={\bullet}\ar@{}+<0pt,10pt>*{_{\wt R}}\ar[ld]_(.4)y\ar@{}+<4pt,4pt>*{.}\ar@{}+<8pt,8pt>*{.}\ar@{}+<12pt,12pt>*{.}&\\
*+={\bullet}\ar@{}+<-10pt,0pt>*{_R}\ar[rd]_y&&*+[o][F]{\bullet}\ar@{}+<0pt,10pt>*{_{I_2}}\ar[ld]
\ar@{}+<4pt,-4pt>*{.}\ar@{}+<8pt,-8pt>*{.}\ar@{}+<12pt,-12pt>*{.}
&&*+={\bullet}\ar@{}+<0pt,10pt>*{_{\wt R}}\ar[dl]_(.4)y&&\\
&*+={\bullet}\ar@{}+<0pt,10pt>*{_{I_1}}\ar[dl]\ar@{}+<4pt,-4pt>*{.}\ar@{}+<8pt,-8pt>*{.}\ar@{}+<12pt,-12pt>*{.}
&&*+[o][F]{\bullet}\ar@{}+<0pt,10pt>*{_{\wt R}}\ar[dl]_(.4)y
\ar@{-->} '[r] '[rrrruuu]_x '[ruuuuuu] [uuuuuu]&&&\\
*+={\bullet}\ar@{}+<-10pt,0pt>*{_R}\ar@{}+<0pt,-6pt>*{.}\ar@{}+<0pt,-12pt>*{.}\ar@{}+<0pt,-18pt>*{.}&&
*+={\bullet}\ar@{}+<0pt,10pt>*{_{\wt R}}\ar@{}+<-6pt,-6pt>*{.}\ar@{}+<-12pt,-12pt>*{.}\ar@{}+<-18pt,-18pt>*{.}
\ar@{-->} '[rr] '[rrrrrruuuu]_x '[rrrruuuuuu] [rruuuuuu]&&&&&&&&\\
&&&&&&&&&\\
*+={\bullet}\ar@{}+<0pt,10pt>*{_Q}&&&&&\\
&&&&&&
}}
$$
\caption{}\label{fig5}
\end{figure}

\vspace{3mm}

Namely first we change the metric to include the $\AR$-quiver of $R$ (see Figure \ref{fig3}) into a finite region
- it will be contained in the ambient triangle erected vertically on the vertex $Q$. Now compactify this
triangle by adjusting the copies of indecomposable pure injective $\CM$-modules to its boarder. Namely add copies
of $\wt R$ as direct limits along rays of irreducible maps. We also add the irreducible maps $\wt R\xr{y} \wt R$,
which are the direct limits of inclusion maps $I_{n+1}\subset I_n$ in the $\AR$-quiver; and adjust $Q$ as the direct
limit of this ray between the $\wt R$ (going in the southwestern direction). Further add $G$ as a limit of the ray
of irreducible maps $I_{\fty}\xr{y} I_{\fty}$ and adjust remaining irreducible morphisms $\wt R\xr{x} I_{\fty}$ and
$Q\xr{x} G$.

We use irreducible morphisms to connect the points on the boundary, hence give an orientation to sides of the triangle
to glue them.

$$
\xymatrix@C=10pt@R=14pt{%
*+{\circ}\ar[drr]\ar@{-}[dd]+<0pt,5pt>&\ar@{}+<6pt,0pt>*{_{I_\fty}}&\\
\ar@{}+<-8pt,4pt>*{_R}&&*+={\bullet}\ar[lld]\ar@{}+<10pt,0pt>*{_G}\\
*+={\bullet}\ar@{}+<0pt,-10pt>*{_Q}& \ar@{}+<0pt,2pt>*{_{\wt R}}&
}
$$

\vspace{3mm}

We see that topologically Ringel's quilt of $R$ lives on the M\"obius stripe. A small deficiency is the empty point
on the boundary of the triangle which can be easily amended: adjust the zero module to this point.

From Ringel's quilt (see Figure \ref{fig5}) we get a better understanding of morphisms between finitely generated
$\CM$-modules. For instance, starting from $R$ one can move along the southeastern line, then live the
$\AR$-component in $\wt R$ and enter the other components through $I_{\fty}$. After that one could reenter the
first component by embedding $I_{\fty}$ in $R$, - the morphism that heads in the southwestern direction.
This way we have completed the revolution.

Observe that $x\in \wt R$ is divisible by any power of $y$. Thus every morphism from $\wt R$ to a finitely
generated $\CM$-module annihilates the submodule $Qx$ of $\wt R$, therefore factors through $I_{\fty}$. Thus
$I_{\fty}$ is a point where $\wt R$ enters the category of finitely generated $\CM$-modules.

There is one misleading point concerning Figure \ref{fig5}. The choice where the morphism $\wt R\to I_{\fty}$
enters the second component is completely arbitrary, but the construction becomes rigid as soon as such choice
has been made. Thus saying 'a revolution' is a bit ambiguous, the corresponding walk is better represented
by the following diagram.

$$
\def\objectstyle{\scriptstyle}
\xymatrix@C=10pt@R=4pt{%
*+{\circ}\ar@{-}[dddrrr]\ar@{-}[dddddd]&&\\
&*+={\bullet}\ar@{-->}[dl]\ar@{}+<6pt,8pt>*{I_{\fty}}&\\
*+={\bullet}\ar@{}+<-10pt,0pt>*{R}\ar@{-->}[ddrr]\ar@{}+<-10pt,0pt>*{R}&&&\\
&&&*+={\bullet}\ar@{-}[dddlll]\\
*+={\bullet}\ar@{-->}[dr]\ar@{}+<-10pt,0pt>*{R}&&*+={\bullet}\ar@{}+<10pt,0pt>*{\wt R}&\\
&*+={\bullet}\ar@{}+<6pt,-8pt>*{\wt R}\ar@{-->}@/_6pc/[uuuu]\\
*+={\bullet}&&&\\
&&&&
}
$$

\vspace{3mm}

It may be easier to grasp such walks in the following representations of Ringel's quilt - see Fugure \ref{fig6},
where we dashed the fundamental domain. Thus it is obtained as a quotient of the vertical strip on which a cyclic group
acts by a glide-reflection. Again by adding the zero module we will get rid of small triangular deficiencies on
this diagram, but one should add maps $G\to 0$ and $0\to I_{\fty}$ which are not irreducible.

\begin{figure}
$$
\vcenter{%
\xymatrix@C=20pt@R=18pt{%
*+={\bullet}\ar[dr]_y\ar@{}+<0pt,-10pt>*{_R}\ar@{--}
\ar@{--} '[]+<0pt,18pt>'[rrrrrddddd]+<18pt,0pt> '[rddddddddd]+<0pt,-14pt>'[dddddddd]+<-10pt,-4pt>  '[]+<-10pt,8pt>
[]+<0pt,18pt>
&&&&&&&&\\
&*+={\bullet}\ar[dl]\ar[dr]_y\ar@{}+<0pt,10pt>*{_{I_1}}&&&&&&\\
*+[o][F]{\bullet}\ar[dr]_y\ar@{}+<0pt,10pt>*{_R}&&*+={\bullet}\ar[dl]\ar[dr]_y\ar@{}+<0pt,10pt>*{_{I_2}}&&&&&\\
&*+[o][F]{\bullet}\ar[dl]\ar[dr]_y\ar@{}+<0pt,10pt>*{_{I_1}}&&
*+={\bullet}\ar[dl]\ar@{}+<0pt,10pt>*{_{I_3}}\ar@{}+<4pt,-4pt>*{.}\ar@{}+<8pt,-8pt>*{.}\ar@{}+<12pt,-12pt>*{.}&&&&\\
*+={\bullet}\ar[dr]_y\ar@{}+<0pt,10pt>*{_R}&&
*+[o][F]{\bullet}\ar[dl]\ar@{}+<0pt,10pt>*{_{I_2}}\ar@{}+<4pt,-4pt>*{.}\ar@{}+<8pt,-8pt>*{.}\ar@{}+<12pt,-12pt>*{.}&&
*+={\bullet}\ar[dl]_y\ar[dr]_x\ar@{}+<0pt,10pt>*{_{\wt R}}&&&\\
&*+={\bullet}\ar[dl]\ar@{}+<0pt,10pt>*{_{I_1}}\ar@{}+<4pt,-4pt>*{.}\ar@{}+<8pt,-8pt>*{.}\ar@{}+<12pt,-12pt>*{.}&&
*+[o][F]{\bullet}\ar[dl]_y\ar[dr]_x\ar@{}+<0pt,10pt>*{_{\wt R}}&&
*+={\bullet}\ar[dl]_y\ar@{}+<4pt,6pt>*{_{I_{\fty}}}
\ar@{}+<4pt,-4pt>*{.}\ar@{}+<8pt,-8pt>*{.}\ar@{}+<12pt,-12pt>*{.}&&\\
*+={\bullet}\ar@{}+<0pt,10pt>*{_R}\ar@{}+<4pt,-4pt>*{.}\ar@{}+<8pt,-8pt>*{.}\ar@{}+<12pt,-12pt>*{.}&&
*+={\bullet}\ar[dl]_y\ar[dr]_x\ar@{}+<0pt,10pt>*{_{\wt R}}&&
*+[o][F]{\bullet}\ar[dl]_y\ar@{}+<0pt,10pt>*{_{I_{\fty}}}
\ar@{}+<4pt,-4pt>*{.}\ar@{}+<8pt,-8pt>*{.}\ar@{}+<12pt,-12pt>*{.}&&*+={\bullet}\ar[dl]_y\ar@{}+<0pt,-10pt>*{_R}
\ar@{--} '[]+<10pt,4pt>'[dddddddd]+<10pt,0pt> '[lddddddddd]+<-4pt,-14pt>'[lllllddddd]+<-22pt,0pt>  '[]+<0pt,16pt>
[]+<10pt,4pt>&&&&&&&&
\\
&*+={\bullet}\ar[dr]_x\ar@{}+<0pt,10pt>*{_{\wt R}}\ar@{}+<-4pt,-4pt>*{.}\ar@{}+<-8pt,-8pt>*{.}\ar@{}+<-12pt,-12pt>*{.}&&
*+={\bullet}\ar[dl]_y\ar@{}+<0pt,10pt>*{_{I_{\fty}}}\ar@{}+<4pt,-4pt>*{.}\ar@{}+<8pt,-8pt>*{.}\ar@{}+<12pt,-12pt>*{.}&&
*+[o][F]{\bullet}\ar[dl]_y\ar[dr]\ar@{}+<0pt,10pt>*{_{I_1}}&&\\
*+={\bullet}\ar[dr]_x\ar@{}+<0pt,10pt>*{_Q}&&
*+={\bullet}\ar@{}+<0pt,12pt>*{_{I_{\fty}}}\ar@{}+<4pt,-4pt>*{.}\ar@{}+<8pt,-8pt>*{.}\ar@{}+<12pt,-12pt>*{.}
\ar@{}+<-4pt,-4pt>*{.}\ar@{}+<-8pt,-8pt>*{.}\ar@{}+<-12pt,-12pt>*{.}&&
*+={\bullet}\ar[dl]_y\ar[dr]\ar@{}+<0pt,10pt>*{_{I_2}}&&*+[o][F]{\bullet}\ar[dl]_y\ar@{}+<0pt,10pt>*{_R}&\\
&*+={\bullet}\ar@{}+<0pt,10pt>*{_G}\ar@{}+<4pt,-4pt>*{.}\ar@{}+<8pt,-8pt>*{.}\ar@{}+<12pt,-12pt>*{.}&&
*+={\bullet}\ar[dr]\ar@{}+<0pt,10pt>*{_{I_3}}\ar@{}+<-4pt,-4pt>*{.}\ar@{}+<-8pt,-8pt>*{.}\ar@{}+<-12pt,-12pt>*{.}&&
*+={\bullet}\ar[dl]_y\ar[dr]\ar@{}+<0pt,10pt>*{_{I_1}}&&\\
&&*+={\bullet}\ar[dl]_x\ar[dr]^y\ar@{}+<0pt,12pt>*{_{\wt R}}&&
*+={\bullet}\ar[dr]\ar@{}+<0pt,10pt>*{_{I_2}}\ar@{}+<-4pt,-4pt>*{.}\ar@{}+<-8pt,-8pt>*{.}\ar@{}+<-12pt,-12pt>*{.}&&
*+={\bullet}\ar[dl]_y\ar@{}+<0pt,10pt>*{_R}\\
&*+={\bullet}\ar[dr]^y\ar@{}+<0pt,12pt>*{_{I_{\fty}}}\ar@{}+<-4pt,-4pt>*{.}\ar@{}+<-8pt,-8pt>*{.}\ar@{}+<-12pt,-12pt>*{.}&&
*+={\bullet}\ar[dr]^y\ar[dl]_x\ar@{}+<0pt,12pt>*{_{\wt R}}&&
*+={\bullet}\ar@{}+<0pt,10pt>*{_{I_1}}\ar[dr]\ar@{}+<-4pt,-4pt>*{.}\ar@{}+<-8pt,-8pt>*{.}\ar@{}+<-12pt,-12pt>*{.}&\\
*+={\bullet}\ar[dr]\ar@{}+<0pt,-10pt>*{_R}
\ar@{--} '[]+<0pt,18pt>'[rrrrrddddd]+<18pt,0pt> '[rddddddddd]+<0pt,-14pt>'[dddddddd]+<-10pt,-4pt>  '[]+<-10pt,8pt>
[]+<0pt,18pt>
&&
*+={\bullet}\ar[dr]^y\ar@{}+<0pt,12pt>*{_{I_{\fty}}}\ar@{}+<-4pt,-4pt>*{.}\ar@{}+<-8pt,-8pt>*{.}\ar@{}+<-12pt,-12pt>*{.}&&
*+={\bullet}\ar[dl]_x\ar@{}+<0pt,12pt>*{_{\wt R}}\ar[dr]&&*+={\bullet}\ar@{}+<0pt,12pt>*{_R}
\ar@{}+<-4pt,-4pt>*{.}\ar@{}+<-8pt,-8pt>*{.}\ar@{}+<-12pt,-12pt>*{.}&&\\
&*+={\bullet}\ar[dr]\ar[dl]\ar@{}+<0pt,10pt>*{_{I_1}}&&
*+={\bullet}\ar@{}+<0pt,12pt>*{_{I_{\fty}}}\ar[dr]\ar@{}+<-4pt,-4pt>*{.}\ar@{}+<-8pt,-8pt>*{.}\ar@{}+<-12pt,-12pt>*{.}&&
*+={\bullet}\ar[dl]_x\ar@{}+<0pt,12pt>*{_{\wt R}}\ar@{}+<4pt,-4pt>*{.}\ar@{}+<8pt,-8pt>*{.}\ar@{}+<12pt,-12pt>*{.}&&&&\\
*+={\bullet}\ar[dr]\ar@{}+<0pt,10pt>*{_R}&&*+={\bullet}\ar[dr]\ar[dl]\ar@{}+<0pt,10pt>*{_{I_2}}&&
*+={\bullet}\ar@{}+<0pt,12pt>*{_{I_{\fty}}}\ar@{}+<4pt,-4pt>*{.}\ar@{}+<8pt,-8pt>*{.}\ar@{}+<12pt,-12pt>*{.}
\ar@{}+<-4pt,-4pt>*{.}\ar@{}+<-8pt,-8pt>*{.}\ar@{}+<-12pt,-12pt>*{.}&&
*+={\bullet}\ar@{}+<0pt,12pt>*{_Q}\ar[dl]_x&&\\
&*+={\bullet}\ar[dl]\ar[dr]\ar@{}+<0pt,10pt>*{_{I_1}}&&
*+={\bullet}\ar[dl]\ar@{}+<0pt,10pt>*{_{I_3}}\ar@{}+<4pt,-4pt>*{.}\ar@{}+<8pt,-8pt>*{.}\ar@{}+<12pt,-12pt>*{.}&&
*+={\bullet}\ar@{}+<0pt,12pt>*{_G}&&&\\
*+={\bullet}\ar[dr]\ar@{}+<0pt,10pt>*{_R}&&
*+={\bullet}\ar[dl]\ar@{}+<0pt,10pt>*{_{I_2}}\ar@{}+<4pt,-4pt>*{.}\ar@{}+<8pt,-8pt>*{.}\ar@{}+<12pt,-12pt>*{.}&&
*+={\bullet}\ar[dr]^x\ar[dl]_y\ar@{}+<0pt,10pt>*{_{\wt R}}&&&&&\\
&*+={\bullet}\ar[dl]\ar@{}+<0pt,10pt>*{_{I_1}}\ar@{}+<4pt,-4pt>*{.}\ar@{}+<8pt,-8pt>*{.}\ar@{}+<12pt,-12pt>*{.}&&
*+={\bullet}\ar[dr]^x\ar[dl]_y\ar@{}+<0pt,10pt>*{_{\wt R}}&&
*+={\bullet}\ar[dl]_y\ar@{}+<0pt,10pt>*{_{I_{\fty}}}&&&&\\
*+={\bullet}\ar@{}+<0pt,10pt>*{_R}\ar@{}+<4pt,-4pt>*{.}\ar@{}+<8pt,-8pt>*{.}\ar@{}+<12pt,-12pt>*{.}&&
*+={\bullet}\ar[dr]^x\ar@{}+<0pt,10pt>*{_{\wt R}}\ar[dl]_y&&
*+={\bullet}\ar[dl]_y\ar@{}+<0pt,10pt>*{_{I_{\fty}}}&&&&&\\
&*+={\bullet}\ar[dr]^x\ar@{}+<0pt,10pt>*{_{\wt R}}\ar@{}+<-4pt,-4pt>*{.}\ar@{}+<-8pt,-8pt>*{.}\ar@{}+<-12pt,-12pt>*{.}&&
*+={\bullet}\ar[dl]_y\ar@{}+<0pt,10pt>*{_{I_{\fty}}}&&&&&&\\
*+={\bullet}\ar@{}+<0pt,10pt>*{_Q}\ar[rd]^x&&
*+={\bullet}\ar@{}+<0pt,10pt>*{_{I_{\fty}}}\ar@{}+<-4pt,-4pt>*{.}\ar@{}+<-8pt,-8pt>*{.}\ar@{}+<-12pt,-12pt>*{.}&&&&&&\\
&*+={\bullet}\ar@{}+<0pt,10pt>*{_G}&&&&&&
}}
$$
\caption{}\label{fig6}
\end{figure}

\vspace{5mm}

\section{The infinite radical}

In this section we will calculate the nilpotency index of the radical of the category of finitely generated
$\CM$-modules over $R$.

Define the radical, written $\rad$,  of this category as a set of all morphisms $f: M\to N$ between finitely
generated $\CM$-modules such that for every indecomposable finitely generated $\CM$-module $K$, any composition
$K\to M\xr{f} N\to K$ is not invertible in the local ring $\End(K)$. Clearly $\rad$ is a 2-sided ideal in this
category.

Decomposing $M$ and $N$ as direct sums of indecomposable modules we represent $f$ as a finite matrix $f_{ij}$
whose entries are morphisms between indecomposable modules. Clearly $f\in \rad$ iff each $f_{ij}\in \rad$.
Thus when describing the radical is usually suffices to consider morphisms between indecomposable modules.

Furthermore, because each indecomposable finitely generated $\CM$-module is pure injective, from
\cite[Prop. 4.3.45]{Preb} it follows that a map $f: M\to N$ between such indecomposables belong to the radical
iff it increases the pp-type of one (equivalently any) nonzero element of $M$.

Next we will show that there are enough irreducible morphisms in our category.

\begin{lemma}\label{gen}
$\rad$ is generated by irreducible morphisms.
\end{lemma}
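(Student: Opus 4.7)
The plan is to show that every morphism $f\colon M\to N$ in $\rad$ between indecomposable finitely generated $\CM$-modules admits a factorization $f=\phi\circ s\circ\psi$ with $s$ irreducible, so that $f$ lies in the ideal generated by irreducibles; the reduction to the indecomposable case is already given in the excerpt via matrix decomposition. Note that $\phi$ and $\psi$ may be arbitrary morphisms, so they need not themselves lie in the ideal.

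The main tool is the Auslander--Reiten theory from Section \ref{S-fg}. When $N\ne R,I_\fty$, the AR-sequence $0\to\tau N\to E\to N\to 0$ produces a right almost split map $E\to N$ which is a direct sum of irreducibles; since $f\in\rad$ is not a split epi, it lifts to $\tilde f\colon M\to E$, and $f$ is then expressed as a sum of compositions $\phi_i\circ s_i$ with $s_i$ irreducible. Symmetrically, for $M\ne R,I_\fty$ the left almost split map out of $M$ factors $f$ with irreducible left factors. It remains to treat the exceptional cases $M,N\in\{R,I_\fty\}$, where no AR-sequence in the f.g.~$\CM$-category is available.

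For $N=R$ (projective-injective in the f.g.~$\CM$-category, so no AR-sequence ends in it), any $f\in\rad(M,R)$ is non-surjective---otherwise it would split by projectivity---so $\im f\seq\mm=I_1$ and $f$ factors through the irreducible inclusion $I_1\hookrightarrow R$. Dually, for $M=R$ any $f\colon R\to N$ in $\rad$ sends $1$ into $\mm N=xN+yN$, decomposing $f$ as a sum of two morphisms factoring through $R\xr{y}I_1\to N$ or $R\xr{x}I_\fty\to N$, with the leftmost arrow irreducible. For $M=I_\fty$ or $N=I_\fty$ the pattern description of Figure \ref{fig2} shows that every pointed morphism from $(I_\fty,x)$ to an indecomposable target is obtained from the irreducible loop $I_\fty\xr{y}I_\fty$ together with a morphism into or out of the finite component, reducing to cases already handled.

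The main obstacle is organising the exceptional cases uniformly, since no irreducible arrow of the AR-quiver of the f.g.~$\CM$-category bridges the finite and the $I_\fty$ components. The remedy is to use the explicit structure of morphisms between indecomposable f.g.~$\CM$-modules as multiplications by elements of the quotient ring $Q$, which allows one to peel off a single irreducible (multiplication by $x$ or $y$, or an inclusion) at every step. Although this $y$-adic peeling may not terminate in finitely many steps---reflecting the non-trivial infinite radical $\rad^\om$ responsible for the later nilpotency index $\om+2$---each single step already writes $f$ as $\phi\circ s\circ\psi$ with $s$ irreducible, because the ambient factors $\phi$ and $\psi$ are arbitrary, and this is precisely what is required for the lemma.
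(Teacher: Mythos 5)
Your overall strategy (peel off irreducibles, use AR theory for the interior points, treat $R$ and $I_\fty$ by hand) matches the paper's, and your handling of the interior points $I_n$, $1\leq n<\fty$, via almost split maps and of the case $N=R$ (image in $\mm=I_1$, so factor through the irreducible inclusion $I_1\hookrightarrow R$) is correct. But the case $M=R$ has two genuine errors.

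First, it is not true that a radical morphism $f\colon R\to N$ sends $1$ into $\mm N$. Take $f\colon R\to I_1$ with $f(1)=y$: this is radical (any map $R\to I_1$ is, since $R\not\cong I_1$), yet $y\notin\mm I_1=\mm^2=(xy,y^2)$, because $y=axy+by^2$ would give $1=ax+by\in\mm$. So the decomposition of $f$ as a sum of a map through $x$ and a map through $y$ is not available in general.

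Second, $R\xr{x}I_\fty$ is not an irreducible morphism in the category of finitely generated $\CM$-modules, so even when the decomposition is available this branch does not land in the ideal generated by irreducibles on its own. Indeed $R\xr{x}I_\fty$ factors as $R\xr{y}I_1\xr{g}I_\fty$, where $g(y)=x$ and $g(x)=0$; here $\cdot y\colon R\to I_1$ is not a split mono ($I_1$ is indecomposable and $\not\cong R$) and $g$ is not a split epi (its kernel is $xR\cong I_\fty$, giving a non-split sequence $0\to I_\fty\to I_1\to I_\fty\to 0$, as $I_1$ is indecomposable). Consistently with this, $R\xr{x}I_\fty$ does not appear in the $\AR$-quiver of Section~\ref{S-fg}.

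The paper sidesteps both issues by showing that \emph{every} radical morphism $f\colon R\to N$ already factors through the single irreducible $R\xr{y}I_1$: one checks directly (or reads off Figure~\ref{fig3}) that for every indecomposable f.g.\ $\CM$-module $N$ and every $n\in N$ that is not a unit when $N=R$, there is a morphism $g\colon I_1\to N$ with $g(y)=n$. The paper likewise handles $M=I_\fty$ by observing that a radical endomorphism of $I_\fty$ factors through the loop $I_\fty\xr{y}I_\fty$, while any morphism $I_\fty\to I_n$ has image inside $xR\subset I_{n+1}$ and so factors through the irreducible inclusion $I_{n+1}\subset I_n$; your appeal to Figure~\ref{fig2} here is in the right spirit but would need to be made precise along these lines.
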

\begin{proof}
It suffices to prove that every morphism $f: M\to N$ between indecomposable finitely generated $\CM$-modules
which lies in the radical belongs to the ideal generated by irreducibles. If $M$ is a finite point $I_n$, $n\geq 1$,
then this follows from the fact that $I_n$ is the source of a left almost split map. Suppose that $M= I_0= R$ and look
at at $f(x)$. From Figure \ref{fig3} we conclude that $f$ factors through the irreducible morphism $R\xr{y} I_1$.

It remains to consider the case $M= I_{\fty}$. Observing the same figure we conclude that either $f$ factors through
the irreducible morphism $I_{\fty}\xr{y} I_{\fty}$, or $f$ has $I_n$ as a target. In the latter case $f$ factors
through the irreducible inclusion $I_{n+1}\subset I_n$.
\end{proof}

We define the \emph{infinite powers of the radical}, $\rad^{\lam}$, by a transfinite induction starting from
$\rad^1= \rad$. If $\lam$ is a limit ordinal, then set $\rad^{\lam}= \cap_{\mu< \lam} \rad^{\mu}$; for instance
\emph{the infinite radical} $\rad^{\om}$ equals $\cap_n \rad^n$. Finally if $\lam= \mu+ m$ is a successor ordinal,
then define $\rad^{\lam}= (\rad^{\mu})^{m+1}$, for instance $\rad^{\om+1}= (\rad^{\om})^2$. The least $\lam$ such
that $\rad^{\lam}= 0$ (if exists) is called \emph{the nilpotency index} of the radical. For instance, the nilpotency
index equals $\om+ 1$ if there exists a nonzero morphism in the infinite radical, but for any two such morphisms
their composition equals zero.

There is a heuristic (see \cite{Sch00b}) intuition how to use the $m$-dimension of the lattice $L$ of pp-formulae
to guess the nilpotency index of the radical. Namely suppose that the $m$-dimension of $L$ equals $\be+ 1$,
hence $L_{\be}$ is the last nontrivial lattice in the $m$-dimension analysis. If the length of $L_{\be}$ equals
$n$, then one could expect $\om \be+ n-1$ as the value of the nilpotency index. Since in our case the
$m$-dimension equals $2$ and $n= 3$, we should get $\om+ 2$, which is confirmed below.

The following remark is straightforward.

\begin{remark}\label{inf-rad}
Suppose that $f: M\to N$ is a morphism of indecomposable finitely generated $\CM$-modules. If $f\in \rad^{\om}$
and $mx=0$ for $m\in M$ then $f(m)= 0$.
\end{remark}
\begin{proof}
Because $mx= 0$ the pp-type of $m$ in $M$ is visible on Figure \ref{fig2}. Since $f$ belongs to the infinite
radical, for each $k$, it can be written as a sum of products $f_1\cdot\ldots\cdot f_k$ of radical maps between
indecomposable finitely generated $\CM$-modules. Since each $f_i$ lower the pp-type of an element on the figure,
we derive the desired.
\end{proof}

In fact the above condition characterizes maps in the infinite radical, but we prefer to be even more precise
when describing such morphism between indecomposable modules.

\begin{lemma}\label{infrad-desc}
1) Each morphism $I_n\to I_{\fty}$ and $I_{\fty}\to I_n$ belongs to $\rad^{\om}$.

2) A morphism $h: I_m\to I_n$ belongs to $\rad^{\om}$ iff it factors through $I_{\fty}$.

3) Each nonzero endomorphism of $I_{\fty}$ does not belong to $\rad^{\om}$.
\end{lemma}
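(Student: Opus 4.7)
My plan is to treat the three parts in order, using explicit factorizations through the irreducible morphisms of Section \ref{S-fg} for parts (1) and (2), and the structure of $\End(I_{\fty})$ for part (3). One preliminary algebraic fact that I will invoke throughout is that any morphism $I_j\to I_{\fty}$ with $j<\fty$ annihilates $x$; this follows from a direct check that $\ann_{I_j}(x) = xR = I_{\fty}$ in $R=F[[y]][x]/(x^2)$, together with the observation that the image of a map $I_j\to I_{\fty}$ lies in $xR$, which forces $f(x)=xs$ with $s\in xR$, hence $f(x)=0$.

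For part (1), a morphism $I_{\fty}\to I_n$ sends $x$ to some $xy^i$; I would factor it as $I_{\fty}\xr{y^i} I_{\fty}\hookrightarrow I_m\hookrightarrow I_{m-1}\hookrightarrow\dots\hookrightarrow I_n$, using the irreducible inclusions $I_{j+1}\subset I_j$ and letting $m$ grow without bound. Dually, a morphism $I_n\to I_{\fty}$ can be extended along a long chain of multiplication-by-$y$ maps $I_j\to I_{j+1}$ and terminated with an appropriate $I_m\to I_{\fty}$. Since each factor lies in $\rad$, the full composition lies in $\rad^k$ for every $k$, hence in $\rad^{\om}$. For part (2), the implication ($\La$) is immediate from (1) using that $\rad^{\om}$ is a two-sided ideal. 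For ($\Ra$), I invoke Remark \ref{inf-rad}: because $x\cdot x=0$ and $(xy^m)\cdot x=0$ in $I_m$, the morphism $h\in\rad^{\om}$ must kill $x$ and must send $y^m$ into $\ann_{I_n}(x)=I_{\fty}$; so the image of $h$ lies inside the submodule $I_{\fty}\subset I_n$, and $h$ factors through this inclusion.

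For part (3), note that $I_{\fty}\cong R/xR\cong F[[y]]$, so $\End(I_{\fty})=F[[y]]$ is a local ring with Jacobson radical $yF[[y]]$. I would prove by induction on $n$ that $\rad^n_{I_{\fty},I_{\fty}} = y^nF[[y]]$, and then conclude $\rad^{\om}_{I_{\fty},I_{\fty}} = \cap_n\, y^nF[[y]] = 0$. The inductive step, after decomposing each factorization into paths through indecomposable intermediate modules, reduces to two cases: if the intermediate indecomposable is $I_{\fty}$, then the $y$-exponents of consecutive factors simply add; if it is some finite $I_j$, the composition is forced to be zero by the opening observation that morphisms $I_j\to I_{\fty}$ kill $x$. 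This second case is the main obstacle, as it is what forbids clever factorizations slipping through the finite part of the AR-quiver; it rests on the structural fact about maps $I_j\to I_{\fty}$ which is already implicit in the pattern of Proposition \ref{patt}.
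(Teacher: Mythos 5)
Your overall strategy is sound and largely correct, but it diverges from the paper's argument in several places, so it is worth comparing. For part (1), the case $I_n\to I_{\fty}$, the paper passes $f$ through the left almost split map $I_n\to I_{n-1}\oplus I_{n+1}$ and runs an induction; you instead factor $f$ through the single irreducible map $I_n\xr{y} I_{n+1}$ and iterate. Both work, and your version is arguably cleaner once one notes that any $f:I_n\to I_{\fty}$ extends along $\cdot\,y:I_n\to I_{n+1}$ precisely because $f(x)=0$ and $I_{\fty}$ is $y$-torsion-free. For part (2), direction $(\Ra)$, the paper uses Remark~\ref{inf-rad} to show $h$ kills $xR$ and then observes $I_m/xR\cong I_{\fty}$, so $h$ factors through the quotient $I_m\twoheadrightarrow I_{\fty}$; you instead deduce that $h(y^m)\in\ann_{I_n}(x)=I_{\fty}$ (because $x\,h(y^m)=h(xy^m)=0$), so $\im h\seq I_{\fty}$ and $h$ factors through the inclusion $I_{\fty}\hookrightarrow I_n$. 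Again both are valid. For part (3) the paper is much quicker: it applies Remark~\ref{inf-rad} directly to $m=x\in I_{\fty}$ — if $f$ is a nonzero endomorphism then $mx=0$ but $f(m)\neq 0$, so $f\notin\rad^{\om}$. Your alternative of establishing $\rad^n(I_{\fty},I_{\fty})=y^nF[[y]]$ by induction is correct and yields finer information, but it is a substantially longer route to the same conclusion.

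There is a genuine, if small, gap in your justification of the preliminary fact that every morphism $f:I_j\to I_{\fty}$ ($j<\fty$) kills $x$. You write that since $\im f\seq xR$ we get $f(x)=xs$ ``with $s\in xR$'', hence $f(x)=0$; but $\im f\seq xR$ only gives $f(x)=xs$ for some $s\in R$, and nothing forces $s\in xR$. The correct short argument: since $f(y^j)\in I_{\fty}=xR$ and $x^2=0$, we have $x\,f(y^j)=0$; on the other hand $x\,f(y^j)=f(xy^j)=y^j f(x)$, and since $I_{\fty}$ is $y$-torsion-free this gives $f(x)=0$. This fact is needed both in your extension argument for part (1) and in the $M=I_j$ branch of your induction in part (3), so the justification should be repaired even though the statement itself is true and is implicit in Proposition~\ref{patt}.
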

\begin{proof}
1) Let $f: I_n\to I_{\fty}$. If $I_n\xr{\bsm 1\\y\esm} I_{n-1}\oplus I_{n+1}$ is the left almost split map,
then $f$ factors as $f_1+ y f_2$ where $f_1: I_{n-1}\to I_{\fty}$ and $f_2: I_{n+1}\to I_{\fty}$, hence we could
proceed by induction.

Furthermore each map $g: I_{\fty}\to I_m$ is given (up to a multiplicative unit) by multiplication by $y^k$,
therefore it factors as $I_{\fty}\xr{y^k} I_{m+1}\subset I_m$, and proceed by induction.

2) Suppose that $h$ belongs to the infinite radical. Recall that $I_m$ is generated by $x$ and $y^m$.
From Remark \ref{inf-rad} it follows that $f(x R)=0$. Since $I_m/x R\cong I_{\fty}$ we obtain the desired.

The converse is easy, say, because $f$ factors through the infinitely generated module.

3) We may assume that a nonzero endomorphism $f$ of $I_{\fty}$ is given by multiplication by $y^k$. For
$m= x\in I_{\fty}$ we have $mx=0$ but $f(m)\neq 0$, hence $f\notin \rad^{\om}$ by Corollary \ref{inf-rad}.
\end{proof}

Now we are in a position to prove the main result of this section.

\begin{theorem}\label{om2}
The nilpotency index of the radical of the category of finitely generated Cohen--Macaulay $R$-modules equals
$\om+ 2$.
\end{theorem}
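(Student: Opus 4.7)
The plan is to split the argument into the two inequalities $\rad^{\om+1} \neq 0$ and $\rad^{\om+2} = 0$, which together give the nilpotency index $\om + 2$.

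For the non-vanishing $\rad^{\om+1} \neq 0$, I would exhibit a concrete non-zero composition of two $\rad^{\om}$-morphisms. A natural candidate is $R \xr{\cdot x} I_{\fty} \hookrightarrow R$, where the first map is multiplication by $x$ (sending $1$ to $x \in I_{\fty}$) and the second is the inclusion $I_{\fty} = xR \seq R$. Both factors lie in $\rad^{\om}$ by Lemma \ref{infrad-desc}(1), and their composition is multiplication by $x$ in $\End(R) = R$, which is visibly non-zero. Hence $(\rad^{\om})^2 \neq 0$.

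For the vanishing $\rad^{\om+2} = (\rad^{\om})^3 = 0$, I would take arbitrary composable maps $f_1, f_2, f_3 \in \rad^{\om}$ between indecomposable finitely generated $\CM$-modules and show $f_3 f_2 f_1 = 0$. The argument rests on three observations: (a) any module homomorphism $\phi : I_m \to I_{\fty}$ with $m$ finite satisfies $\phi(x) = 0$, and hence kills the submodule $xR \seq I_m$ --- this follows from a short inspection of the relation $\phi(x)\,y^m = \phi(y^m)\,x$ in $I_{\fty} = xR$ together with $\ann_R(x) = xR$; (b) any homomorphism $\psi : I_{\fty} \to I_m$ with $m$ finite lands inside $\ann_{I_m}(x) = xR \seq I_m$, because $\psi(x) \cdot x = \psi(x^2) = 0$ and $\psi(I_{\fty}) = \psi(x) R$; and (c) Lemma \ref{infrad-desc}(3), which rules out non-zero $\rad^{\om}$-endomorphisms of $I_{\fty}$.

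A short case analysis on the middle modules $M_2, M_3$ of the triple $M_1 \xr{f_1} M_2 \xr{f_2} M_3 \xr{f_3} M_4$ then completes the proof. If $M_2 = M_3 = I_{\fty}$, observation (c) forces $f_2 = 0$. If exactly one of $M_2, M_3$ equals $I_{\fty}$, observations (a) and (b) concentrate the relevant image into the $x$-annihilator submodule $xR$, which is then killed by the subsequent or preceding map; when the remaining outer module is also finite one first factors that $\rad^{\om}$-map through $I_{\fty}$ using Lemma \ref{infrad-desc}(2) before applying (a). Finally, if $M_2, M_3$ are both finite, factoring $f_2 = h_2 g_2$ through $I_{\fty}$ via Lemma \ref{infrad-desc}(2) reduces the problem to the previous case applied to the composite $g_2 \circ f_1 : M_1 \to I_{\fty}$. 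The main obstacle is verifying (a): the calculation is short but requires care, since $I_m$ is not cyclic over $R$ for $m \geq 1$ and one must use the precise presentation by generators $x, y^m$ and the relation above. Once (a) and (b) are in hand, the case analysis is mechanical, driven by the universal ``bottleneck'' through $I_{\fty}$ that every $\rad^{\om}$-morphism admits.
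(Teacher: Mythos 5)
Your argument follows the paper's proof closely: the same witness $R\xr{x} I_{\fty}\hookrightarrow R$ for $\rad^{\om+1}\neq 0$, and for the vanishing a case analysis on where $I_{\fty}$ sits among the four modules, routed through Lemma~\ref{infrad-desc}(2). The one genuine difference is that your observations (a) and (b) are elementary facts about \emph{all} homomorphisms $I_m\to I_{\fty}$ and $I_{\fty}\to I_m$ (any map in kills $x$, any map out lands in $xR$), derived directly from the presentation of $I_m$ and from $x^2=0$, whereas the paper funnels the same bottleneck through its pp-type-based Remark~\ref{inf-rad}. Your route is the cleaner one here: note that Remark~\ref{inf-rad}, taken literally, fails for $M=I_{\fty}$ and $N$ finite (the inclusion $I_{\fty}\subset R$ belongs to $\rad^{\om}$ by Lemma~\ref{infrad-desc}(1), yet sends $x\mapsto x\neq 0$); the paper only applies it to composites $I_{\fty}\to I_{\fty}$, which is the safe case covered by Lemma~\ref{infrad-desc}(3), and your (a)/(b) handle the remaining cases transparently without that detour. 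Both proofs still depend on Lemma~\ref{infrad-desc}(2) to factor $\rad^{\om}$-maps between finite $I_m$'s through $I_{\fty}$, so the overall structure is the same; your contribution is to make the mechanism at the $I_{\fty}$-bottleneck explicit and checkable by a one-line calculation rather than by a pp-type argument.
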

\begin{proof}
One half of this theorem is easy. Namely let $f: R\xr{x} I_{\fty}$ and let $g$ be the inclusion $I_{\fty}\subset R$.
Then both maps belong to $\rad^{\om}$ by Lemma \ref{infrad-desc}. From $gf\neq 0$ we conclude that
$\rad^{\om+ 1}= (\rad^{\om})^2\neq 0$.

To prove the converse it suffices to show that any composition $M\xr{f} N\xr{g} K\xr{h} D$ of maps in $\rad^{\om}$
between indecomposable finite generated $\CM$-modules is zero. By Lemma \ref{infrad-desc} this is the case when
$I_{\fty}$ occurs among these modules at least twice.

Suppose that $I_{\fty}$ occurs exactly once. If $M= I_{\fty}$ then $N= I_m$, $K= I_n$, hence the morphism
$N\to K$ factors through $I_{\fty}$.

$$
\vcenter{%
\xymatrix@C=10pt@R=14pt{%
*+={I_{\fty}}\ar[rr]^f&&*+={N}\ar[rr]^g\ar@{-->}[dr]_u&&*+{K}\\
&&&*+={I_{\fty}}\ar@{-->}[ur]_v&
}}
$$

\vspace{2mm}

Because $f\in \rad^{\om}$, by Lemma \ref{inf-rad} we conclude that $uf= 0$, hence the composite map is zero.
Furthermore the proof is similar when $I_{\fty}$ is positioned as $N, K$ or $D$.

Finally if $I_{\fty}$ does not occur in the above sequence then we conclude from the following diagram.

$$
\vcenter{%
\xymatrix@C=10pt@R=14pt{%
*+={M}\ar[rr]^f\ar@{-->}[dr]&&*+={N}\ar[rr]^g\ar@{-->}[dr]&&*+={K}\ar[rr]^h\ar@{-->}[dr]&&*+={D}\\
&*+={I_{\fty}}\ar@{-->}[ur]&&*+={I_{\fty}}\ar@{-->}[ur]&&*+={I_{\fty}}\ar@{-->}[ur]&&
}}
$$

\vspace{2mm}

Namely the composite morphism from the first to the last copy of $I_{\fty}$ factors through $g$, hence belongs
to $\rad^{\om}$, therefore equals zero by Lemma \ref{infrad-desc}.
\end{proof}

\section{Heigher dimensions}

Recall that the $A_{\fty}$ singularity of dimension $d\geq 1$ is defined (see \cite[p. 253]{L-W}) as the hypersurface
$R_d= F[[x_0, \dots, x_d]]/(x_1^2+ \dots + x_d^2)$. A remarkable result, the so-called Kn\"orrer's periodicity
(see \cite[Sect. 8.3]{L-W}), claims that the stable categories of finitely generated $\CM$-modules over these rings
are equivalent, for even, and separately for odd $d$'s.

Suppose that one would like to describe the $\CM$-part of the Ziegler spectrum over these rings. We will show
that even for $d= 2$ this is a problem of enormous complexity. Namely after changing variables we see that
$R_2$ is isomorphic to the ring $F[[x,y,z]/(xz)$. Furthermore the $\CM$-condition means $\Hom(F,M)= 0$
and $\Ext^1(F,M)= 0$.

We will consider a very particular case of this problem. Namely let $T$ denote the closed subset of $\ZCM$
over $R_2$ consisting of modules annihilated by $z$. It is easily seen that those are exactly the
Cohen--Macaulay modules over the 2-dimensional regular ring $S= F[[x,y]]$. Further $S_S$ is the unique
indecomposable finitely generated $\CM$-module over $S$.

The following remark refutes a conjecture made in \cite[Conj. 6.9]{H-P}.

\begin{remark}
Finitely generated points are not dense in the Cohen--Macaulay part of the Ziegler spectrum of $F[[x,y]]$.
\end{remark}
\begin{proof}
If $E$ is the injective envelope of $S/(x+y)S$ then this module is clearly $\CM$. It opens the interval
$[v=0, v(x+y)=0]$, but this interval is closed on $S$, hence $E$ is not in the closure of finitely generated
points.
\end{proof}

Furthermore there is a little hope to classify all points of $\ZCM$ over $S$. Namely let $P= (x^3- y^2)$ be the
prime ideal and let $S'$ be the localization $S_P$. Since $x, y\notin P$ these elements are invertible in $S'$,
hence act as automorphisms on any $S'$-module; in particular every finite generated $S'$-module is an
(infinitely generated) $\CM$-module over $S$. However $S'$ admits as a factor ring the Drozd ring
$F[[x,y]]/(x^2,xy^2,y^3)$, and this 5-dimensional algebra (see \cite[p. 346]{K-L}) is wild.

Thus one should put additional restrictions on non-finitely generated $\CM$-modules over $S$ to classify them.
One natural condition would be the Hochster-like $M\neq M\mm$. However we do not know the answer to the following
question.

\begin{ques}
Does there exist a non-finitely generated indecomposable pure injective Cohen--Macaulay module $M$ over the ring
$F[[x,y]]$ such that $M\neq M\mm$?
\end{ques}

It seems to be more natural to force finitely generated points to be dense, therefore consider the closure, in
the Ziegler spectrum, of finitely generated $\CM$-modules.

\begin{conj}
Let $R_d$, $d\geq 1$ be the $d$-dimensional $A_{\fty}$-singularity over an algebraically closed field $F$ and let
$T_{CM}$ denote the theory of finitely generated Cohen--Macaulay modules over this ring. Then the Cantor--Bendixson
rank of $T_{CM}$ equals $2d$, and the same is the value of the Krull--Gabriel dimension of the definable category
generated by finitely generated Cohen--Macaulay $R_d$-modules.
\end{conj}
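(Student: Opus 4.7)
The plan is induction on $d$, with the base case $d=1$ already settled in Theorem \ref{cb-val} and Proposition \ref{m-dim}. The organizing principle is the philosophy of Section \ref{S-zig}: the closure of finitely generated $\CM$-points in the Ziegler spectrum is completely controlled by the $\AR$-quiver together with direct limits along rays of irreducible morphisms, and the Cantor--Bendixson analysis mirrors the $m$-dimension analysis of the lattice of pp-formulae provided every point in every derivative is isolated by a minimal pair. So the target is to replicate the three-tier structure observed for $d=1$: finitely generated points, then adic-style limits like $\wt R$ and $I_{\fty}$, then generic/quotient points like $G$ and $Q$, now stratified into $2d$ Cantor--Bendixson layers.

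For the inductive step I would use Kn\"orrer's periodicity, which provides a triangle equivalence between the stable Cohen--Macaulay categories of $R_d$ and $R_{d+2}$. Since this equivalence sends finitely generated modules to finitely generated ones, irreducible morphisms to irreducible morphisms, and almost split sequences to almost split sequences, one expects it to lift to a homeomorphism between the non-projective parts of the closures of finitely generated $\CM$-points in the two Ziegler spectra. Each jump $d \mapsto d+2$ should then contribute four new Cantor--Bendixson layers, corresponding in the $m$-dimension picture to two additional rounds of ``localizing away'' parameters, in analogy with the passage $L_{CM} \to L_1 \to L_2$ of Propositions \ref{t-prime} and \ref{l-2}, where $L_2$ was identified as the pp-lattice over the 0-dimensional ring $F((y))[[x]]/(x^2)$.

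The even base case $d=2$ must be treated directly: here $R_2 \cong F[[x,y,z]]/(xz)$ has a tractable (still countably infinite) representation type for finitely generated $\CM$-modules, and the candidate new points in the closure are the normalizations and generic modules supported on each of the two irreducible components, together with their common intersection. For such a ring one would build a ``pattern'' in the style of Section \ref{S-free}, showing that the relevant intervals of the $\CM$-pp-lattice are (locally) distributive, classifying the indecomposable pp-types opening each interval via filters on the pattern as in the proof of Theorem \ref{class-point}, and then running the topology calculations of Section \ref{S-zig} to check that the resulting tower of $\CB$-derivatives has exactly four non-trivial levels.

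The principal obstacle will be verifying that \emph{every} point in every successive derivative is isolated by a minimal pair; this is what forces $\CB$-rank to coincide with $m$-dimension (and hence with Krull--Gabriel dimension) and it was the technical backbone of Section \ref{S-zig}. For $d=1$ this was readable off Figure \ref{fig2}; in higher dimension one needs a structure theorem for $\CM$-pp-formulae, most likely along the lines of a higher-rank analogue of Proposition \ref{int-desc}, combined with Auslander-type existence of $\AR$-sequences in $\CM$-categories of isolated singularities. A secondary but non-trivial obstacle is that Kn\"orrer's equivalence is only an equivalence of stable categories, so the projective-injective point $R_d$ itself and its interaction with the rest of the Ziegler spectrum (the analogue of the ``loose'' border of the quilt in Figure \ref{fig5}) must be reinserted by hand at each inductive step; matching it to a minimal pair at the bottom of the $m$-dimension tower should complete the count to exactly $2d$.
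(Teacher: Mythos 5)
This statement is a \emph{Conjecture} in the paper; no proof is offered, only the informal intuition recorded after it (``the stable part stays the same; add an extra 2 for each additional dimension''). So there is no paper argument to compare your proposal against, and the real question is whether your sketch amounts to a proof. It does not: you name the two central obstacles yourself --- verifying that every point of every $\CB$-derivative is isolated by a minimal pair, and reinserting the projective $R_d$ after a stable equivalence --- and neither is resolved. What you have written is a plausible research plan, not an argument.

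Beyond the acknowledged gaps, the Kn\"orrer step has a specific structural problem that I think you are underestimating. Kn\"orrer's periodicity is an equivalence of \emph{stable} $\CM$-categories, whereas the lattice $L_{CM}$ and the theory $T_{CM}$ of Section \ref{S-free} are built from genuine pointed $\CM$-modules and genuine (not stable) morphisms: the relevant functor is $\Hom(R_d,-)$ and the ``patterns'' are filters of pointed maps out of the projective. A stable equivalence erases exactly the maps factoring through projectives, so it does not induce, without substantial extra work, a homeomorphism between the closures of finitely generated points in $\Zg_{R_d}$ and $\Zg_{R_{d+2}}$, nor an isomorphism between (intervals of) the pp-lattices. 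Moreover, ``reinserting the projective by hand'' cannot on its own account for a jump of $4$ in $\CB$-rank: a single added point raises the rank by at most a bounded amount once the rest of the space is fixed, so the additional four layers must be produced by a genuinely different shape of the pp-lattice in the passage $d\to d+2$, not by sewing one point back in. This is precisely the place where the paper's intuition (``extra 2 per dimension for the projective part'') is a heuristic rather than a mechanism, and your plan inherits that gap rather than filling it. The case $d=2$ would indeed have to be done from scratch in the style of Sections \ref{S-free}--\ref{S-zig}, and the paper itself warns (Section on higher dimensions) that even for $d=2$ the ambient classification problem is wild; the conjectured tameness of the \emph{closure} of the finitely generated points is itself the nontrivial content, and you have not established it.
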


The intuition behind this conjecture is the following. When $d$ increases, the stable part of this category remains
'the same', but for the projective part of this category we should add an extra 2 for each addition dimension.


\end{document}